\documentclass[11pt,letterpaper]{article}
\usepackage[T1]{fontenc}
\usepackage[utf8]{inputenc}
\usepackage[letterpaper,margin=1in]{geometry}
\usepackage{newtxtext}
\usepackage{amsmath,amssymb,amsthm}
\usepackage{newtxmath}
\allowdisplaybreaks[1]
\usepackage{bm}
\usepackage{microtype}
\usepackage{enumitem}
\usepackage{tabularx,multirow,booktabs}
\usepackage[group-separator={,},group-minimum-digits=4]{siunitx}
\usepackage{graphicx}
\usepackage{xcolor}
\usepackage[font=small,labelfont=bf]{caption}
\usepackage[authoryear,round]{natbib}
\bibpunct[, ]{(}{)}{,}{a}{}{,}
\usepackage[hidelinks,unicode]{hyperref}
\hypersetup{
  pdftitle={An Exact Branch-Price-and-Cut Algorithm for a Truck-as-Hub Vehicle Routing Problem with Drones},
  pdfauthor={Fan Pu; Sivakumar Rathinam; Yunlong Zhang; Xiaopeng Li; Xiubin Wang; Yang Zhou},
  pdfsubject={Rural last-mile delivery and exact optimisation},
  pdfkeywords={rural last-mile delivery, vehicle routing problem, drone logistics, branch-price-and-cut, parallel computing}
}

\newtheorem{theorem}{Theorem}
\newtheorem{proposition}{Proposition}
\setlist[itemize]{topsep=4pt,itemsep=2pt,parsep=0pt}

\newcommand{\nodes}[1]{\boldsymbol{\delta}^{#1}}
\newcommand{\Graph}{\mathbf{G}}
\newcommand{\TransGraph}{\mathbf{G}^\top}
\newcommand{\Nodes}{\mathbf{N}} 
\newcommand{\Arcs}{\mathbf{A}}
\newcommand{\CustNodes}{\Nodes^\textnormal{C}}
\newcommand{\HubNodes}{\Nodes^\textnormal{H}}
\newcommand{\DemandWeight}[1]{w_{#1}}
\newcommand{\TruckofDrone}{k_d}
\newcommand{\Trucks}{\mathbf{K}}
\newcommand{\Drones}{\mathbf{D}}
\newcommand{\TruckArcs}{\Arcs^\textnormal{truck}}
\newcommand{\DroneArcs}{\Arcs^\textnormal{drone}}
\newcommand{\DroneFleetSize}{\Theta}
\newcommand{\TruckCost}{c^\textnormal{truck}}
\newcommand{\DroneCost}{c^\textnormal{drone}}

\newcommand{\ObjDelay}{f^{\textnormal{delay}}}
\newcommand{\ObjReturn}{f^{\textnormal{return}}}

\newcommand{\WeightDelay}{\eta^{\textnormal{delay}}}
\newcommand{\WeightReturn}{\eta^{\textnormal{return}}}
\newcommand{\WeightCost}{\eta^{\textnormal{cost}}}
\newcommand{\ObjCoeffDelay}{\rho^{\textnormal{delay}}}
\newcommand{\ObjCoeffReturn}{\rho^{\textnormal{return}}}
\newcommand{\ObjCoeffCost}{\rho^{\textnormal{cost}}}

\newcommand{\ObjLB}[1]{\underline{f}^{#1}}
\newcommand{\ObjUB}[1]{\overline{f}^{#1}}
\newcommand{\delayUB}{\ObjUB{\textnormal{delay}}}
\newcommand{\delayLB}{\ObjLB{\textnormal{delay}}}
\newcommand{\returnUB}{\ObjUB{\textnormal{return}}}
\newcommand{\returnLB}{\ObjLB{\textnormal{return}}}
\newcommand{\costUB}{\ObjUB{\textnormal{cost}}}
\newcommand{\costLB}{\ObjLB{\textnormal{cost}}}

\newcommand{\MaxTruckTravelTime}{\overline{t}^{\textnormal{truck}}}
\newcommand{\MaxDroneTravelTime}{\overline{t}^{\textnormal{drone}}}
\newcommand{\MaxDroneTripTime}{\overline{t}^{\textnormal{trip}}}

\newcommand{\MinTruckNum}{\underline{K}}
\newcommand{\MaxUsedTruckNum}{\overline{K}}
\newcommand{\MinDroneFlightNum}{\underline{D}}
\newcommand{\TruckArrTimeLB}[1]{\underline{a}_{#1}^\textnormal{truck}}
\newcommand{\ShortestTruckTime}[2]{\ell^{\textnormal{truck}}_{#1,#2}}
\newcommand{\RouteTruckArcLimit}{L^{\textnormal{truck}}}
\newcommand{\SortedTruckArcTime}[1]{\overline{t}^{\textnormal{truck}}_{[#1]}}
\newcommand{\HubWaitUB}[1]{\overline{W}_{#1}}
\newcommand{\RouteTimeUB}{\overline{T}^{\textnormal{route}}}
\newcommand{\ServTimeUB}[1]{\overline{a}_{#1}}
\newcommand{\ReqServUB}[1]{\widehat{a}^{\textnormal{req}}_{#1}}
\newcommand{\ServWindowOffset}[1]{\Delta_{#1}^{\textnormal{win}}}
\newcommand{\EffServUB}[1]{\widetilde{a}^{\max}_{#1}}
\newcommand{\LabelServLB}[2]{\underline{a}_{#2}(#1)}
\newcommand{\DeadlineReachSet}[1]{\mathcal{U}^{\textnormal{time}}(#1)}
\newcommand{\TruckServUB}[1]{\overline{a}_{#1}^{\textnormal{truck}}}
\newcommand{\DroneServUB}[1]{\overline{a}_{#1}^{\textnormal{drone}}}
\newcommand{\TruckServLB}[1]{\underline{a}_{#1}^{\textnormal{truck}}}
\newcommand{\DroneServLB}[1]{\underline{a}_{#1}^{\textnormal{drone}}}
\newcommand{\DroneFeasibleHubs}[1]{\mathcal{H}^{\textnormal{D}}_{#1}}
\newcommand{\MandatoryDroneCust}{\mathcal{C}^{\textnormal{D-only}}}
\newcommand{\PositivePart}[1]{\left[#1\right]_+}
\newcommand{\CompletionReturnLB}[1]{\underline{T}^{\textnormal{return}}(#1)}
\newcommand{\ResidualPayload}[1]{Q^{\textnormal{res}}(#1)}
\newcommand{\ReachableCustSet}[1]{\mathcal{U}(#1)}
\newcommand{\DualRewardBound}[1]{\overline{R}(#1)}
\newcommand{\FarkasDualRewardBound}[1]{\overline{R}^{\textnormal{F}}(#1)}
\newcommand{\FarkasReducedCostLB}[1]{\underline{c}^{\textnormal{F,comp}}(#1)}
\newcommand{\KnapReducedCostLB}[1]{\underline{c}^{\textnormal{comp}}(#1)}
\newcommand{\RouteServedSet}[1]{\mathcal{C}(#1)}
\newcommand{\BatchLimit}{B^{\max}}
\newcommand{\FarkasBatchLimit}{B^{\textnormal{F},\max}}
\newcommand{\SRCutBatchLimit}{B^{\textnormal{SR}}}

\newcommand{\CompletionSet}[1]{\mathcal{E}^{\textnormal{complete}}_{#1}}
\newcommand{\Completion}{\boldsymbol{\upsilon}}
\newcommand{\CompletionCost}[2]{\overline c(#1\oplus #2)}
\newcommand{\KnapVar}[1]{\xi_{#1}}
\newcommand{\PhysLoc}[1]{\operatorname{truck}(#1)}
\newcommand{\BlockPos}[1]{\operatorname{cust}(#1)}
\newcommand{\RetCredit}[2]{\Delta^{\textnormal{return}}(#1,#2)}
\newcommand{\BranchState}[1]{\mathbf{b}_{#1}}
\newcommand{\BranchExtSet}[1]{\mathcal{E}^{\textnormal{remain}}_{#1}}
\newcommand{\VisitedTruckNodes}[1]{\mathcal{V}_{#1}}

\newcommand{\RoutePool}{\mathcal{P}}
\newcommand{\NodeRoutePool}[1]{\mathcal{P}_{#1}}
\newcommand{\BranchFeasibleRoutes}[1]{\mathbf{R}_{#1}^{\textnormal{adm}}}
\newcommand{\CandidateSRset}[1]{\mathbf{\Pi}_{#1}^{\textnormal{all}}}

\newcommand{\arriveLB}[1]{\underline{a}_{#1}}
\newcommand{\DroneEndurance}{e^\textnormal{drone}}
\newcommand{\DronePayload}{p^\textnormal{drone}}
\newcommand{\TruckPayload}{p^\textnormal{truck}}
\newcommand{\TruckTravelTime}[2]{t_{#1,#2}^\textnormal{truck}}
\newcommand{\DroneTravelTime}[2]{t_{#1,#2}^\textnormal{drone}}
\newcommand{\DroneTripTime}[2]{\tau_{#1,#2}^\textnormal{drone}}
\newcommand{\PayloadLoad}[2]{w^{\textnormal{pay}}_{#1,#2}}
\newcommand{\OrderLoad}[2]{o_{#1,#2}}
\newcommand{\BigMLoad}{M^{\textnormal{load}}}
\newcommand{\BigMOrder}{M^{\textnormal{order}}}
\newcommand{\BigMTime}{M^{\textnormal{time}}}
\newcommand{\RouteObjShift}{\Delta^{\textnormal{const}}}
\newcommand{\PadActivated}[2]{u_{#1,#2}}
\newcommand{\DroneBlock}[2]{\mathbf{B}_{#2,#1}}
\newcommand{\DroneOutArcs}[1]{\nodes{\textnormal{drone}+}_{#1}}
\newcommand{\TruckOutArcs}[1]{\nodes{\textnormal{truck}+}_{#1}}
\newcommand{\TruckInArcs}[1]{\nodes{\textnormal{truck}-}_{#1}}
\newcommand{\DroneInArcs}[1]{\nodes{\textnormal{drone}-}_{#1}}
\newcommand{\DepotSource}{\underline{n}}
\newcommand{\DepotSink}{\overline{n}}
\newcommand{\ObjCost}{f^{\textnormal{cost}}}
\newcommand{\DronesOfTruck}[1]{\Drones_{#1}}
\newcommand{\RouteSet}{\mathbf{R}}
\newcommand{\RouteSubset}[1]{\mathbf{R}_{#1}}
\newcommand{\RouteCost}[1]{c_{#1}}
\newcommand{\TruckRoute}[1]{{#1}^\textnormal{truck}}
\newcommand{\AbbrProb}{\textnormal{TH-VRPD}}
\newcommand{\SRset}{\mathbf{\Pi}}
\newcommand{\ActiveSRset}[1]{\mathbf{\Pi}_{#1}}
\newcommand{\SRsubset}{\mathbf{\pi}}
\newcommand{\DroneAccessNodes}[1]{\Nodes^\textnormal{drone}_{#1}}
\newcommand{\TruckAccessNodes}[1]{\Nodes^\textnormal{truck}_{#1}}

\newcommand{\Duplicate}[1]{#1^\top}
\newcommand{\Origin}[1]{\operatorname{orig}(#1)}
\newcommand{\ServedCust}[1]{\operatorname{cust}(#1)}

\newcommand{\CustNodesDuplicate}{\Nodes^{\textnormal{C}^\top}}
\newcommand{\CustHubArcsDuplicate}{\Arcs^{\textnormal{HC}^\top}}
\newcommand{\CustDupCustDupArcs}{\Arcs^{\textnormal{C}^\top \textnormal{C}^\top}}
\newcommand{\CustDupCustArcs}{\Arcs^{\textnormal{C}^\top \textnormal{C}}}
\newcommand{\TravelTime}[2]{t_{#1, #2}}

\newcommand{\RouteTravelArc}[3]{\Phi_{#1, #2, #3}}
\newcommand{\SameRouteFlow}[2]{\chi_{#1,#2}}
\newcommand{\TransArcFlow}[3]{\gamma^\top_{#1,#2,#3}}

\newcommand{\BranchArcDown}[3]{\mathcal{R}^{\top,0}_{#1,#2,#3}}
\newcommand{\BranchArcUp}[3]{\mathcal{R}^{\top,1}_{#1,#2,#3}}

\newcommand{\BranchTogether}[1]{\mathcal{R}^{=}_{#1}}
\newcommand{\BranchSeparate}[1]{\mathcal{R}^{\neq}_{#1}}

\newcommand{\RouteArrTime}[2]{a_{#1,#2}}
\newcommand{\ForwardLabel}[1]{\Lambda_{#1}}
\newcommand{\PartialPath}[1]{\Nodes_{#1}}
\newcommand{\TruckWeight}[1]{w_{\textnormal{truck},#1}}
\newcommand{\LabelHub}[1]{h_{#1}}
\newcommand{\TruckArrHubTime}[1]{a_{\textnormal{pad},#1}}
\newcommand{\TruckWaitHubTime}[1]{t_{\textnormal{wait},#1}}
\newcommand{\HubFlyDroneNum}[1]{z_{\textnormal{pad},#1}}
\newcommand{\NumCustPiCoverByLabel}[2]{\psi_{{#1}, {#2}}}
\newcommand{\NumCustPiCoverByLabelSet}[1]{\mathbf{\Psi}_{#1}}
\newcommand{\LabelReducedCost}[1]{\overline{c}_{#1}}
\newcommand{\ForwardLabelInfo}[2]{
\left(
\PartialPath{#1}^{#2},~
\mathcal{V}_{#1}^{\textnormal{T},#2},~
\RealizedServTime{#1}^{#2},~
\TruckWeight{#1}^{#2},~
\LabelHub{#1}^{#2},~
\TruckArrHubTime{#1}^{#2},~
\TruckWaitHubTime{#1}^{#2},~
\HubFlyDroneNum{#1}^{#2},~
\NumCustPiCoverByLabelSet{#1}^{#2}
=
\{
\NumCustPiCoverByLabel{#1}{\SRsubset}^{#2}:
\SRsubset^{#2}\in\SRset
\},~
\mathbf{b}_{#1}^{#2},~
\LabelReducedCost{#1}^{#2}
\right)
}

\newcommand{\SourceNeighborSet}[1]{\mathcal{N}^{\textnormal{first}}_{#1}}
\newcommand{\SourceNeighborBlock}[2]{\mathcal{N}^{\textnormal{first}}_{#1,#2}}
\newcommand{\PricingWorkerNum}[1]{K^{\textnormal{par}}_{#1}}
\newcommand{\WorkerRouteSet}[2]{\mathcal{R}^{\textnormal{adm}}_{#1,#2}}
\newcommand{\SourceWorkScore}[2]{\widehat W_{#1}(#2)}
\newcommand{\OpenTaskLabels}[1]{\mathcal L^{\textnormal{open}}_{#1}}
\newcommand{\TaskClosureGap}[1]{G_{#1}}
\newcommand{\TaskWorkEstimate}[1]{\widehat R_{#1}}

\newcommand{\TruckUseArc}[3]{x_{#1,#2,#3}}
\newcommand{\DroneUseArc}[3]{y_{#1,#2,#3}}
\newcommand{\RealizedServTime}[1]{a_{#1}}
\newcommand{\WaitTime}[2]{t_{#1,#2}^\textnormal{wait}}
\newcommand{\TruckArrTime}[2]{a_{#1,#2}^\textnormal{truck}}
\newcommand{\DroneArrTime}[2]{a_{#1,#2}^\textnormal{drone}}
\newcommand{\RouteVisitNode}[2]{\theta_{#1,#2}}
\newcommand{\NumCust}{C}
\newcommand{\NumHub}{H}
\newcommand{\DroneAccessNum}[1]{q_{#1}}
\newcommand{\TruckAccessNum}[1]{s_{#1}}
\newcommand{\RouteSelect}[1]{z_{#1}}

\newcommand{\SRCorrectionSet}[2]
{\widehat{\SRset}\!\left(#1,#2\right)}

\makeatletter
\newcommand{\compactappendixparagraphs}{%
  \renewcommand\paragraph{\@startsection{paragraph}{4}{\z@}%
    {1.5ex \@plus .3ex \@minus .2ex}{-1em}%
    {\normalfont\normalsize\bfseries}}}
\makeatother

\title{\LARGE\bfseries An Exact Branch-Price-and-Cut Algorithm for a Truck-as-Hub Vehicle Routing Problem with Drones}
\author{%
\normalsize Fan Pu$^{1}$, Sivakumar Rathinam$^{2}$, Yunlong Zhang$^{1}$,\\
\normalsize Xiaopeng Li$^{3}$, Xiubin Wang$^{1}$, Yang Zhou$^{1}$\\[5pt]
\footnotesize $^{1}$Department of Civil and Environmental Engineering, Texas A\&M University, TX 77843\\
\footnotesize $^{2}$Department of Mechanical Engineering, Texas A\&M University, TX 77843\\
\footnotesize $^{3}$Department of Civil and Environmental Engineering, University of Wisconsin-Madison, WI 53706\\[4pt]
\footnotesize\texttt{fan-pu@tamu.edu; srathinam@tamu.edu; yzhang@civil.tamu.edu}\\
\footnotesize\texttt{xli2485@wisc.edu; bwang@tamu.edu; yangzhou295@tamu.edu}%
}
\date{}

\begin{document}
\maketitle
\begin{abstract}
Rural last-mile logistics remains challenging because of sparse demand, limited road infrastructure, and long travel distances. We study a Truck-as-Hub Vehicle Routing Problem with Drones in Rural Areas, in which multiple trucks each carry a dedicated fleet of drones. Drone launch and retrieval are restricted to predesignated passive synchronization pads, where a visiting truck temporarily acts as a mobile hub, may dispatch multiple drones simultaneously, and must wait until all launched drones return before continuing its route. We formulate a mixed-integer quadratic programming model that jointly accounts for customer delivery delays, truck return times, and operating costs. To solve the problem exactly, we transform the original truck--drone network so that each feasible route can be represented as a single path and develop a branch-price-and-cut algorithm with tailored pricing, cutting, branching, and parallel-computing procedures. Computational experiments show that the proposed algorithm proves optimality for most large-scale test cases and achieves optimality gaps of no more than 4\% for the remaining cases, whereas Gurobi reaches the 1,800-second time limit on all large-scale instances when applied directly to the compact formulation. Sensitivity analyses show that drone operating costs and managerial priorities substantially affect drone utilization and customer delays, providing practical guidance for balancing service quality and operating cost in rural delivery systems.
\end{abstract}
\noindent\textbf{Keywords:} rural last-mile delivery; vehicle routing problem; drone logistics; branch-price-and-cut; parallel computing.

\section{Introduction}\label{intro}
Rural last-mile delivery is particularly challenging due to sparse demand, limited road accessibility, and long travel distances. Trucks offer substantial range and payload capacity, while drones can extend service coverage to hard-to-reach locations but are constrained by limited payload and battery endurance. Hybrid truck--drone systems leverage these complementary strengths: trucks provide long-haul transportation and serve as mobile drone carriers, while drones handle selected short-range deliveries \citep{murray2015flying,agatz2018optimization}.

One key challenge in truck–drone delivery lies in coordinating the two vehicle types. In many formulations, a drone departs from its companion truck, flies to a customer to deposit a package, and then returns to the truck for battery replacement or recharging before its next flight. Meanwhile, the truck continues ground deliveries, proceeding to the next scheduled rendezvous point where the drone will rejoin it \citep{ha2018min,carlsson2018coordinated}.
This hybrid system has been formalized in two related problems: the \emph{Flying Sidekick Traveling Salesman Problem} (FSTSP) \citep{murray2015flying}, which models drone missions as sorties, and the \emph{Traveling Salesman Problem with Drone} (TSP-D) \citep{agatz2018optimization}, which treats each drone trip as an operation.
In the FSTSP, a drone may launch from and rejoin the truck at different locations \citep{murray2015flying}, with subsequent work considering alternative objectives such as operating cost \citep{ha2018min}. In the TSP-D, drone trips are represented as operations associated with the truck route \citep{agatz2018optimization}.

The FSTSP and TSP-D literature shows that simultaneous drone and truck operation boosts routing efficiency and cuts idle time. In practice, however, traffic variability and unpredictable road conditions complicate precise rendezvous timing, introducing significant uncertainty. To keep the problem tractable, most studies therefore focus on a single truck–single drone pairing, sidestepping the added complexity of larger fleets.
A natural extension considers multiple drones carried by one truck. Some approaches require the truck to wait at a launch location until all dispatched drones return \citep{moshref2017using}, whereas others extend TSP- or FSTSP-based formulations to multiple drones \citep{cavani2021exact,murray2020multiple}.

With multiple trucks and drones, solving the resulting problem to optimality becomes substantially more challenging. Recent exact methods for related truck--drone variants exploit route-space formulations within branch-price-and-cut or branch-and-price frameworks \citep{xia2023branch,li2023branch,yin2023branch,joo2025branch,schmidt2025exact}. Depending on the problem setting, truck--drone synchronization may occur at customer locations \citep{schmidt2025exact} or along truck arcs \citep{li2022truck}.
An alternative architecture that reduces synchronization requirements uses fixed service hubs. Trucks transport package-equipped drones to intermediate hubs, from which the drones are dispatched to customers \citep{kim2018traveling,pinto2022point,xia2023branch}. \cite{wang2019vehicle} introduced drone stations at locations distinct from customer delivery sites. These stations support landing, recharging, and transfer operations while maintaining both active and backup drones. If a truck arrives before its assigned drone, it can continue with a backup unit, while the delayed drone becomes the reserve for a later sortie. Although this arrangement increases operational flexibility, it requires capital investment and ongoing maintenance. High delivery volumes and concentrated urban demand can help amortize these costs, whereas permanent facilities are harder to justify in rural regions with sparse demand, dispersed customers, and limited access to suitable sites with power and utilities.

\cite{kang2021exact} considered a ``wait-and-return'' strategy in which a truck carrying multiple drones parks at a waiting node, dispatches drones to nearby customers, and remains there until all drones return. This arrangement avoids moving truck--drone rendezvous and does not require permanent drone stations. Instead, visited truck stops act as temporary hubs for drone launch and retrieval. Such an arrangement is well suited to rural delivery because it can serve dispersed customers without requiring fixed drone infrastructure.

We propose the Truck-as-Hub Vehicle Routing Problem with Drones in Rural Areas (\AbbrProb), in which trucks act as temporary mobile hubs at a set of predesignated synchronization pads.
These pads are passive stopping locations, such as rural parking spaces, roadside pull-outs, farm lay-bys, or community stopping locations, and contain no permanent drone resources.
When a truck visits a pad, it may launch multiple onboard drones simultaneously to serve nearby customers and remains at the pad until all dispatched drones return before continuing its route.
The \AbbrProb{} is related to the wait-and-return setting of \cite{kang2021exact}, but differs in that drone launch and retrieval are restricted to exogenous non-customer synchronization pads instead of being permitted at arbitrary truck stops. This restriction captures rural settings in which drone operations require suitable stopping locations without relying on permanent drone facilities.
\autoref{fig:intro} shows the delivery network in panel (a) and the truck-as-hub operation at a synchronization pad in panel (b). Customers are served either directly by trucks or by drones dispatched from visited pads.

\begin{figure}[tbp]
    \centering
    \includegraphics[width=0.9\textwidth]{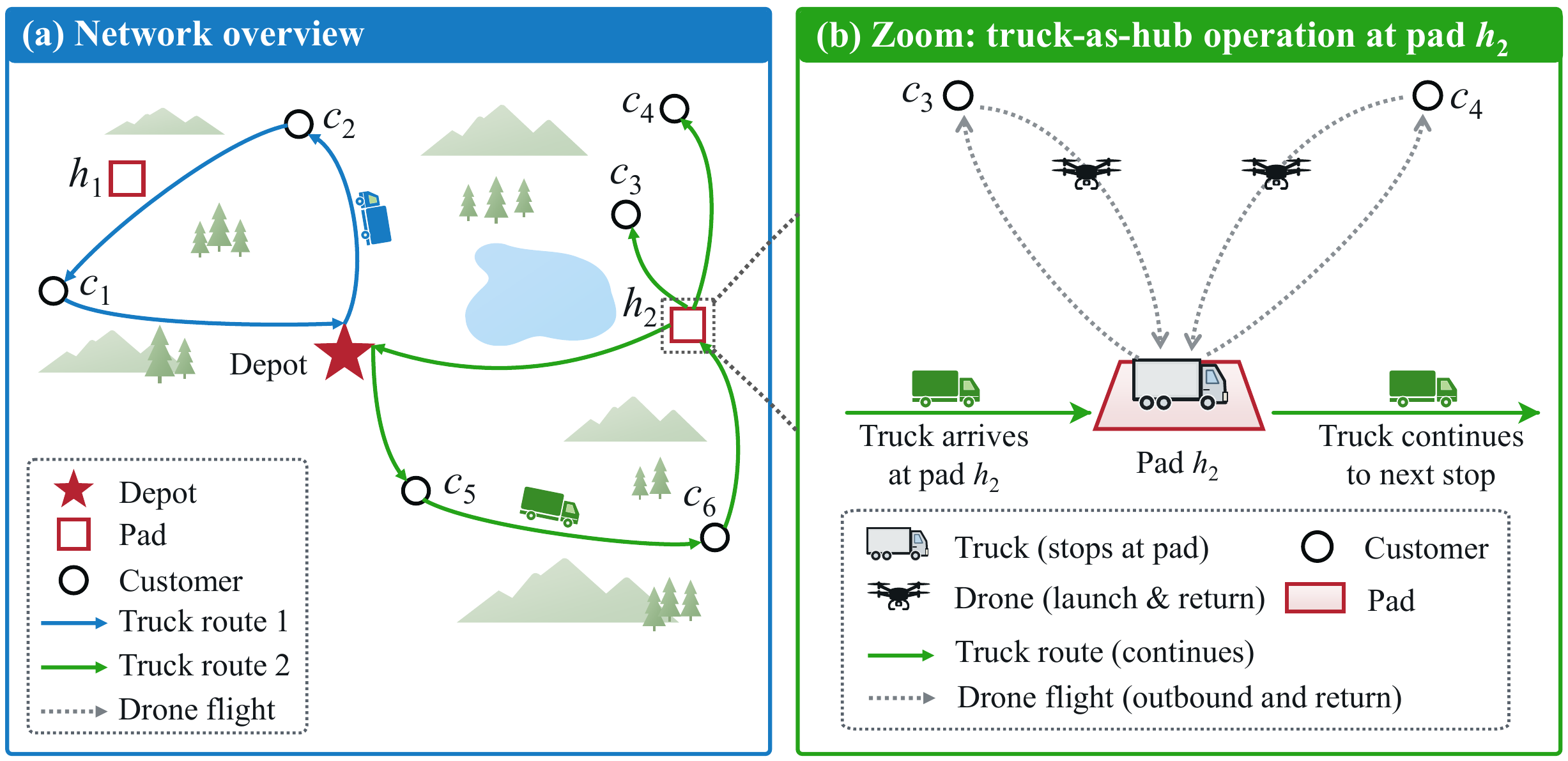}
    \caption{Illustration of the \AbbrProb{} operating concept.}
    \label{fig:intro}
\end{figure}

Another important distinction concerns the objective function. Common criteria include minimization of tour completion time or makespan \citep{wang2017vehicle,poikonen2017vehicle,agatz2018optimization} and total operating cost \citep{murray2015flying,wang2019vehicle,nguyen2022min,tamke2023vehicle}.  
Subsequent studies have broadened this scope by introducing specialized performance metrics, such as energy consumption minimization \citep{kuo2023applying}, reduction of customer service latency \citep{moshref2017using}, and maximization of customer coverage \citep{ulmer2018same}. However, these objectives do not guarantee minimal disparities in customer service levels (e.g., waiting time). Nevertheless, to the best of the authors’ knowledge, no existing work explicitly addresses service-level disparities, e.g., variability in customer waiting times, within the truck–drone last-mile delivery framework.
This shortfall is particularly acute in sparsely populated rural areas, where the long distances between hubs or depots and remote customers tend to bias service quality in favor of locations nearer the depot. A U.S. Postal Service report on low‐density delivery zones notes that ``carriers may spend a great deal of time driving down country roads to deliver to the most remote locales,'' thereby imposing longer waits on those at the periphery \citep{usps_oig_2020_package}. To address these service disparities, we formulate the \AbbrProb{} as a mixed-integer quadratic program (MIQP) with a normalized weighted objective that combines squared customer delivery delay, total truck return time, and operating cost.

The main contributions of this paper are as follows:
\begin{itemize}
    \item We introduce the Truck-as-Hub Vehicle Routing Problem with Drones in Rural Areas (\AbbrProb), in which trucks act as temporary mobile hubs at passive, predesignated synchronization pads. We formulate the problem as an arc-based mixed-integer quadratic program that jointly considers customer-level delivery delays, truck return times, and operating costs. In particular, the quadratic delay term discourages disproportionately poor service to remote customers and promotes more balanced service levels across rural regions.

    \item We construct a canonical transformed network that provides a cost-preserving elementary-path representation of each feasible truck--drone route and yields an equivalent route-space set-partitioning formulation.

    \item We develop an exact branch-price-and-cut algorithm for the route-space formulation. The algorithm combines tailored forward label-setting pricing with dominance and safe reduced-cost pruning, subset-row inequalities, and branching rules that preserve the pricing structure; parallel pricing with dynamic route-space repartitioning is used to accelerate the exact search.
\end{itemize}

\section{Problem Description}\label{sec:prob}
We consider a rural last-mile delivery system served by a fleet of trucks $\Trucks$ initially stationed at a central depot. Each truck $k\in\Trucks$ carries a fixed set of homogeneous drones $\DronesOfTruck{k}$, with $|\DronesOfTruck{k}|=\DroneFleetSize$. Let $\Drones:=\bigcup_{k\in\Trucks}\DronesOfTruck{k}$, where each drone $d\in\Drones$ is permanently assigned to truck $\TruckofDrone$. The truck and drone payload capacities are $\TruckPayload$ and $\DronePayload$, respectively, and $\DroneEndurance$ is the maximum round-trip duration of a drone sortie.
The physical network is represented by a directed graph $\Graph=(\Nodes,\Arcs)$ with node set
$
\Nodes:=\CustNodes\cup\HubNodes\cup\{\DepotSource,\DepotSink\}
$.
Here, $\CustNodes$ denotes the set of customers, $\HubNodes$ denotes the set of predesignated synchronization pads, and $\DepotSource$ and $\DepotSink$ are the source and sink copies of the central depot. These sets are pairwise disjoint. Each customer $n\in\CustNodes$ has a parcel of weight $\DemandWeight{n}>0$. A synchronization pad is a passive stopping location rather than a permanent drone facility: it contains no drones, parcels, batteries, or backup vehicles unless these resources are brought by a visiting truck.
The arc set $\Arcs$ is partitioned into the truck arc set $\TruckArcs$ and the drone sortie arc set $\DroneArcs$. Each truck arc $(i,j)\in\TruckArcs$ has travel time $\TruckTravelTime{i}{j}$. The truck arc set satisfies $\TruckArcs\subseteq(\Nodes\setminus\{\DepotSink\})\times(\Nodes\setminus\{\DepotSource\})$, so no truck arc enters $\DepotSource$ or leaves $\DepotSink$.

A drone arc $(h,j)\in\DroneArcs$, where $h\in\HubNodes$ and $j\in\CustNodes$, represents the option of serving customer $j$ by a drone launched from pad $h$. The one-way outbound flight time is denoted by $\DroneTravelTime{h}{j}$. For each $(h,j)\in\DroneArcs$, the return flight time $\DroneTravelTime{j}{h}$ is also known, although the return leg $(j,h)$ is not separately included in $\DroneArcs$. Then, the round-trip sortie time is
$
\DroneTripTime{h}{j}
:=
\DroneTravelTime{h}{j}
+
\DroneTravelTime{j}{h}.
$
A customer may be served by a drone from pad $h$ only if its parcel can be carried by both the drone and the truck and the complete sortie satisfies the drone endurance limit. Thus, $(h,j)$ is feasible only if $\DemandWeight{j}\le\DronePayload$, $\DemandWeight{j}\le\TruckPayload$, and $\DroneTripTime{h}{j}\le\DroneEndurance$.
We assume that every customer can be included in at least one complete depot-to-depot truck route, either through direct truck service or through a feasible drone sortie launched from a visited synchronization pad. Optionally, each customer $n\in\CustNodes$ may be assigned a latest allowable delivery time $\ReqServUB{n}$. For a customer without such a requirement, we set $\ReqServUB{n}=+\infty$. When a finite delivery-time limit is imposed, pre-processing verifies that at least one complete service representation satisfies it; otherwise, the instance is declared infeasible.

At a visited pad, the truck waiting time equals the longest launched round-trip sortie, or zero if none is launched; a drone-served customer's delivery time equals the truck arrival time plus its one-way outbound flight time.
For clarity, the following assumptions define the \AbbrProb: (A1) Each customer is served exactly once, either directly by a truck or by a drone launched from a synchronization pad; (A2) Each used truck departs from $\DepotSource$ and terminates at $\DepotSink$. Each drone remains assigned to the same truck throughout the planning horizon; (A3) Synchronization pads are exogenous non-customer locations, so $\HubNodes\cap\CustNodes=\emptyset$; (A4) Drones may be launched and retrieved only at synchronization pads. Each drone serves at most one customer per sortie and returns to the same truck at the same pad from which it was launched; (A5) A truck may deploy multiple drones simultaneously but cannot leave the pad until all drones launched there have returned; (A6) After returning to its truck, a drone becomes immediately available for a later sortie through battery replacement.

\section{Mathematical Formulations}\label{sec:form}

\subsection{Compact arc-based formulation}\label{arc-form}
The compact formulation uses the following decision variables:
\begin{itemize}
    \item \(\TruckUseArc{i}{j}{k}\): equals 1 if truck \(k\) traverses truck arc \((i,j)\in\TruckArcs\), and 0 otherwise.
    \item \(\DroneUseArc{h}{j}{d}\): equals 1 if drone \(d\) is launched from pad \(h\in\HubNodes\), serves customer \(j\in\CustNodes\), and returns to the same truck at pad \(h\), and 0 otherwise.
    \item \(\PadActivated{h}{k}\): equals 1 if truck \(k\) visits synchronization pad \(h\), and 0 otherwise.
    \item \(\TruckArrTime{n}{k}\ge0\): arrival time of truck \(k\) at node \(n\in\Nodes\).
    \item \(\DroneArrTime{n}{d}\ge0\): delivery time of customer \(n\) if served by drone \(d\).
    \item $\RealizedServTime{n}\ge\arriveLB{n}$: realized delivery time of customer $n\in\CustNodes$, where $\arriveLB{n}$ is a customer-specific lower bound used as the delay benchmark. Its construction is given in Appendix~\ref{sec:arr_bound}.
    \item \(\WaitTime{h}{k}\ge0\): waiting time of truck \(k\) at pad \(h\in\HubNodes\).
    \item \(\PayloadLoad{n}{k}\ge0\): cumulative real parcel weight delivered by truck \(k\) and its onboard drones after the truck reaches node \(n\).
    \item $\OrderLoad{n}{k}\ge0$: MTZ ordering variable associated with node $n$ and truck $k$.
\end{itemize}

Several constraint families below use big-$M$ terms to deactivate conditional relations when the corresponding arc is not selected, and we define a separate valid constant for each constraint family when it is introduced.
For clarity, for $n\in\Nodes$, let $\TruckOutArcs{n}$ and $\TruckInArcs{n}$ denote the sets of outgoing and incoming truck arcs, respectively. For $h\in\HubNodes$ and $n\in\CustNodes$, let $\DroneOutArcs{h}$ and $\DroneInArcs{n}$ denote the sets of drone-service arcs leaving pad $h$ and entering customer $n$, respectively.

\textbf{\emph{Truck flow and pad activation constraints}}:
\begin{subequations} \label{c:truck_flow_con}
\begin{align}
    &\sum_{(\DepotSource,j)\in \TruckOutArcs{\DepotSource}} \TruckUseArc{\DepotSource}{j}{k}
    =
    \sum_{(i,\DepotSink)\in \TruckInArcs{\DepotSink}} \TruckUseArc{i}{\DepotSink}{k},
    &&\forall k\in\Trucks,\label{c:truck_flow_con_1} \\
    &\sum_{(n,j)\in\TruckOutArcs{n}}\TruckUseArc{n}{j}{k}
    =
    \sum_{(i,n)\in\TruckInArcs{n}} \TruckUseArc{i}{n}{k},
    &&\forall n\in\CustNodes\cup\HubNodes,~k\in\Trucks, \label{c:truck_flow_con_2}\\
    &\sum_{(\DepotSource,j)\in\TruckOutArcs{\DepotSource}} \TruckUseArc{\DepotSource}{j}{k} \le 1,
    &&\forall k\in\Trucks,\label{c:truck_once}\\
    &\sum_{(n,j)\in\TruckOutArcs{n}}\TruckUseArc{n}{j}{k}\le1,
    &&\forall n\in\CustNodes\cup\HubNodes,~k\in\Trucks,\label{c:elementary-node-visit}\\
    &\PadActivated{h}{k}
    =
    \sum_{(i,h)\in\TruckInArcs{h}}\TruckUseArc{i}{h}{k},
    &&\forall h\in\HubNodes,~k\in\Trucks.
    \label{c:pad-activation}
\end{align}
\end{subequations}

Constraints $\eqref{c:truck_flow_con_1}$--$\eqref{c:truck_once}$ enforce source--sink truck flow, while \eqref{c:elementary-node-visit} limits each truck to at most one visit to any customer or synchronization pad. Constraint \eqref{c:pad-activation} links pad activation to a truck visit, thereby allowing drone service only at visited pads.

\textbf{\emph{Customer assignment constraints}}: Each customer is served exactly once, either directly by a truck or by a drone sortie. Therefore
\begin{equation}\label{c:cust_once}
    \sum_{(i,n)\in\TruckInArcs{n}} \sum_{k\in\Trucks} \TruckUseArc{i}{n}{k}
    +
    \sum_{(h,n)\in\DroneInArcs{n}} \sum_{d\in\Drones}\DroneUseArc{h}{n}{d}
    =1,
    \qquad \forall n\in\CustNodes .
\end{equation}

\textbf{\emph{Truck-carried drone launch constraints}}: for all $h\in\HubNodes,~d\in\Drones$, $\sum_{j:(h,j)\in\DroneOutArcs{h}} \DroneUseArc{h}{j}{d}\le\PadActivated{h}{\TruckofDrone}$; and for all $h\in\HubNodes,~k\in\Trucks$, $\sum_{d\in\DronesOfTruck{k}}\sum_{j:(h,j)\in\DroneOutArcs{h}} \DroneUseArc{h}{j}{d}\le\DroneFleetSize\PadActivated{h}{k}$. These constraints permit each drone to serve at most one customer during a pad visit and only when its assigned truck visits that pad; they also limit the number of simultaneous sorties to the truck's onboard drone fleet.

\textbf{\emph{Drone endurance and payload constraints}}: For every $h\in\HubNodes$, $(h,j)\in\DroneOutArcs{h}$, and $d\in\Drones$, we impose $\DroneTripTime{h}{j}\DroneUseArc{h}{j}{d}\le\DroneEndurance$ and $\DemandWeight{j}\DroneUseArc{h}{j}{d}\le\DronePayload$.

\textbf{\emph{Truck payload constraints}}:

Payload propagation counts the demand of a directly truck-served customer or the total demand assigned to drones at a visited pad. A valid big-$M$ constant for both propagation cases is
\begin{equation}\label{eq:load-big-m}
\BigMLoad
:=
\TruckPayload
+
\max\left\{
\max_{n\in\CustNodes}\DemandWeight{n},~
\max_{h\in\HubNodes}
\min\left\{
\sum_{(h,j)\in\DroneOutArcs{h}}\DemandWeight{j},~
\DroneFleetSize\DronePayload,~
\TruckPayload
\right\}
\right\}.
\end{equation}

The maximum over $h\in\HubNodes$ is interpreted as $0$ if no feasible drone sortie exists. We impose $\PayloadLoad{\DepotSource}{k}=0$ for every $k\in\Trucks$ and $\PayloadLoad{n}{k}\le\TruckPayload$ for every $n\in\Nodes$ and $k\in\Trucks$. The payload-propagation constraints are
\begin{subequations}\label{c:truck-payload}
\begin{align}
    & \PayloadLoad{n}{k}
    \ge
    \PayloadLoad{i}{k}+\DemandWeight{n}-\BigMLoad(1-\TruckUseArc{i}{n}{k}),
    && \forall n\in\CustNodes,~(i,n)\in\TruckInArcs{n},~k\in\Trucks, \label{c:truck_weight2} \\
    & \begin{aligned}
        \PayloadLoad{h}{k}
        \ge
        \PayloadLoad{i}{k}
        +\sum_{(h,j)\in\DroneOutArcs{h}}\DemandWeight{j}
        \sum_{d\in\DronesOfTruck{k}}\DroneUseArc{h}{j}{d}
        -\BigMLoad(1-\TruckUseArc{i}{h}{k}),
    \end{aligned}
    && \forall h\in\HubNodes,~(i,h)\in\TruckInArcs{h},~k\in\Trucks . \label{c:truck_weight3}
\end{align}
\end{subequations}

\textbf{\emph{MTZ-type truck-ordering constraints}}:

The truck-flow constraints may admit disconnected subtours, which we eliminate using Miller--Tucker--Zemlin (MTZ) ordering constraints \citep{miller1960integer,desrochers1991improvements}. Since an elementary truck route visits at most $|\CustNodes|+|\HubNodes|$ customer and synchronization-pad nodes, we set $\BigMOrder:=|\CustNodes|+|\HubNodes|+1$. For every $k\in\Trucks$, we impose $\OrderLoad{\DepotSource}{k}=0$ and $0\le\OrderLoad{n}{k}\le|\CustNodes|+|\HubNodes|$ for all $n\in\CustNodes\cup\HubNodes$. The ordering recursion is
\begin{equation}\label{c:truck_order_recursion}
\OrderLoad{j}{k} \ge \OrderLoad{i}{k} +1 -\BigMOrder\bigl(1-\TruckUseArc{i}{j}{k}\bigr), \quad \forall (i,j)\in\TruckArcs:j\in\CustNodes\cup\HubNodes,~k\in\Trucks.
\end{equation}

\textbf{\emph{Truck waiting time constraints}}:

Let $\MaxDroneTripTime:=\max_{(h,j)\in\DroneArcs}\DroneTripTime{h}{j}$ denote the maximum round-trip duration among feasible drone sorties, with $\MaxDroneTripTime:=0$ when $\DroneArcs=\emptyset$. The truck waiting-time constraints are
\begin{subequations}\label{c:truck_wait}
\begin{align}
&\WaitTime{h}{k}
\ge
\DroneTripTime{h}{j}
\sum_{d\in\DronesOfTruck{k}}\DroneUseArc{h}{j}{d},
&&\forall h\in\HubNodes,~k\in\Trucks,\,
(h,j)\in\DroneOutArcs{h},
\label{c:truck_wait1}\\
&\WaitTime{h}{k}
\le
\MaxDroneTripTime
\sum_{(h,j)\in\DroneOutArcs{h}}
\sum_{d\in\DronesOfTruck{k}}\DroneUseArc{h}{j}{d},
&&\forall h\in\HubNodes,~k\in\Trucks,
\label{c:truck_wait2}
\end{align}
\end{subequations}

Constraints $\eqref{c:truck_wait1}$--$\eqref{c:truck_wait2}$ set the minimum feasible $\WaitTime{h}{k}$ to the longest sortie launched at pad $h$, or to zero if no drone is launched. We also set $\WaitTime{n}{k}=0$ for $n\in\Nodes\setminus\HubNodes$ and $k\in\Trucks$.

\textbf{\emph{Time bounds for the timing constraints}}:

To formulate the conditional truck- and drone-arrival relations, we first construct a valid upper bound on the duration of any feasible truck route.
Let $\MaxTruckTravelTime:=\max_{(i,j)\in\TruckArcs}\TruckTravelTime{i}{j}$ and $\MaxDroneTravelTime:=\max_{(h,j)\in\DroneArcs}\DroneTravelTime{h}{j}$, where the latter is defined as $0$ when $\DroneArcs=\emptyset$.
Because a truck route is elementary, it contains at most $\RouteTruckArcLimit:=|\CustNodes|+|\HubNodes|+1$
truck arcs. Let $\SortedTruckArcTime{q}$ denote the $q$th largest truck travel time in the multiset $\{\TruckTravelTime{i}{j}:(i,j)\in\TruckArcs\}$, with $\SortedTruckArcTime{q}=0$ when $q>|\TruckArcs|$. For each $h\in\HubNodes$, let $\HubWaitUB{h}:=\max_{j:(h,j)\in\DroneArcs}\DroneTripTime{h}{j}$, with $\HubWaitUB{h}:=0$ when $\DroneOutArcs{h}=\emptyset$.
An elementary route uses at most $\RouteTruckArcLimit$ truck arcs and visits each synchronization pad at most once. Hence, a valid upper bound on its return time is $\RouteTimeUB := \sum_{q=1}^{\RouteTruckArcLimit}\SortedTruckArcTime{q} + \sum_{h\in\HubNodes}\HubWaitUB{h}$.

\textbf{\emph{Arrival and realized service-time constraints}}:

We impose $\TruckArrTime{\DepotSource}{k}=0$ for every $k\in\Trucks$ and $\RealizedServTime{n}\ge\arriveLB{n}$ for every $n\in\CustNodes$ and
\begin{subequations}\label{c:realized_group}
\begin{align}
&\TruckArrTime{j}{k}
\ge
\TruckArrTime{i}{k}
+\WaitTime{i}{k}
+\TruckTravelTime{i}{j}
-\BigMTime(1-\TruckUseArc{i}{j}{k}),
&&\forall (i,j)\in\TruckArcs,~k\in\Trucks,
\label{c:truck_arrive1a}\\
&\TruckArrTime{j}{k}
\le
\TruckArrTime{i}{k}
+\WaitTime{i}{k}
+\TruckTravelTime{i}{j}
+\BigMTime(1-\TruckUseArc{i}{j}{k}),
&&\forall (i,j)\in\TruckArcs,~k\in\Trucks,
\label{c:truck_arrive1b}\\
&\DroneArrTime{j}{d}
\ge
\TruckArrTime{h}{\TruckofDrone}
+\DroneTravelTime{h}{j}
-\BigMTime(1-\DroneUseArc{h}{j}{d}),
&&\forall h\in\HubNodes,~(h,j)\in\DroneOutArcs{h},~d\in\Drones,
\label{c:drone_arrive1}\\
&\DroneArrTime{j}{d}
\le
\TruckArrTime{h}{\TruckofDrone}
+\DroneTravelTime{h}{j}
+\BigMTime(1-\DroneUseArc{h}{j}{d}),
&&\forall h\in\HubNodes,~(h,j)\in\DroneOutArcs{h},~d\in\Drones,
\label{c:drone_arrive2}\\
&\RealizedServTime{n}
\ge
\TruckArrTime{n}{k}
-\BigMTime\left(
1-\sum_{(i,n)\in\TruckInArcs{n}}\TruckUseArc{i}{n}{k}
\right),
&&\forall n\in\CustNodes,~k\in\Trucks,
\label{c:realized-truck}\\
&\RealizedServTime{n}
\ge
\DroneArrTime{n}{d}
-\BigMTime\left(
1-\sum_{(h,n)\in\DroneInArcs{n}}\DroneUseArc{h}{n}{d}
\right),
&&\forall n\in\CustNodes,~d\in\Drones.
\label{c:realized-drone}
\end{align}
\end{subequations}

Constraints $\eqref{c:truck_arrive1a}$--$\eqref{c:truck_arrive1b}$ propagate truck arrival times along selected truck arcs. The arrival time at the head of an arc equals the arrival time at its tail plus any waiting time at the tail and the corresponding truck travel time. Constraints $\eqref{c:drone_arrive1}$--$\eqref{c:drone_arrive2}$ compute the arrival time of a drone-served customer using the truck's arrival time at the launch pad and the one-way drone travel time. Thus, the one-way time $\DroneTravelTime{h}{j}$ determines customer service time, whereas the round-trip time $\DroneTripTime{h}{j}$ determines the truck waiting time in $\eqref{c:truck_wait1}$.

We now specify the deactivation constant $\BigMTime$ used in these constraints. When $\TruckUseArc{i}{j}{k}=0$, the right-hand side of $\eqref{c:truck_arrive1a}$ can contain a truck arrival time of at most $\RouteTimeUB$, a pad waiting time of at most $\MaxDroneTripTime$, and a truck travel time of at most $\MaxTruckTravelTime$. Therefore, a valid deactivation bound for the truck-arrival relations is $\RouteTimeUB+\MaxDroneTripTime+\MaxTruckTravelTime$.
Similarly, when $\DroneUseArc{h}{j}{d}=0$, constraints $\eqref{c:drone_arrive1}$--$\eqref{c:drone_arrive2}$ should impose no relationship between $\DroneArrTime{j}{d}$ and $\TruckArrTime{h}{\TruckofDrone}$. Since $\TruckArrTime{h}{\TruckofDrone}\le\RouteTimeUB$ and $\DroneTravelTime{h}{j}\le\MaxDroneTravelTime$, the constant $\RouteTimeUB+\MaxDroneTravelTime$ is sufficient to deactivate these constraints.
These bounds also deactivate \eqref{c:realized-truck}--\eqref{c:realized-drone} when the corresponding vehicle does not serve the customer. Hence, a common valid constant is $\BigMTime:=\RouteTimeUB+\max\{\MaxDroneTripTime+\MaxTruckTravelTime,\MaxDroneTravelTime\}$.

\textbf{\emph{Variable domains and explicit bounds}}:
The variables $\TruckUseArc{i}{j}{k}$, $\DroneUseArc{h}{j}{d}$, and $\PadActivated{h}{k}$ are binary over their respective truck arcs, drone arcs, and pad--truck pairs. For each customer $n\in\CustNodes$, define $\EffServUB{n}:=\min\{\ReqServUB{n},\ServTimeUB{n}\}$, where $\ReqServUB{n}$ is the prescribed latest delivery time and $\ServTimeUB{n}$ is the valid delivery-time upper bound derived below from the route-duration and travel-time bounds. The continuous variables satisfy $0\le\TruckArrTime{n}{k}\le\RouteTimeUB$, $0\le\DroneArrTime{n}{d}\le\RouteTimeUB+\MaxDroneTravelTime$, $\arriveLB{n}\le\RealizedServTime{n}\le\EffServUB{n}$, $0\le\WaitTime{h}{k}\le\HubWaitUB{h}$, $0\le\PayloadLoad{n}{k}\le\TruckPayload$, and $0\le\OrderLoad{n}{k}\le|\CustNodes|+|\HubNodes|$ over their corresponding index sets.

Let $\TruckCost$ denote the fixed cost of deploying a truck and $\DroneCost$ the cost of a drone sortie. The objective combines squared delivery delay $\ObjDelay:=\sum_{n\in\CustNodes}(\RealizedServTime{n}-\arriveLB{n})^2$, total truck return time $\ObjReturn:=\sum_{k\in\Trucks}\TruckArrTime{\DepotSink}{k}$, and operating cost $\ObjCost$, defined as
\begin{equation}\label{obj:cost-term}
\ObjCost
:=
\TruckCost
\sum_{k\in\Trucks}
\sum_{(\DepotSource,j)\in\TruckOutArcs{\DepotSource}}
\TruckUseArc{\DepotSource}{j}{k}
+
\DroneCost
\sum_{d\in\Drones}
\sum_{n\in\HubNodes}
\sum_{(n,j)\in\DroneOutArcs{n}}
\DroneUseArc{n}{j}{d}.
\end{equation}

For customer $n$, the delivery delay is $\RealizedServTime{n}-\arriveLB{n}$. We penalize this nonnegative delay quadratically, so later service incurs a larger marginal penalty. This encourages more comparable service levels across geographically dispersed customers, consistent with convex delay penalties used in related routing models \citep{russell2008vehicle,tacs2014vehicle}.

To derive the upper bound $\ServTimeUB{n}$, let $\ShortestTruckTime{i}{j}$ be the shortest directed truck travel time in $(\Nodes,\TruckArcs)$, using $\TruckTravelTime{u}{v}$ as arc lengths and setting $\ShortestTruckTime{i}{j}=+\infty$ when $j$ is unreachable from $i$. These shortest-path values relax the route's operating constraints and therefore provide lower bounds on the corresponding truck travel times. Using the route-duration bound $\RouteTimeUB$, define $\TruckServUB{n}:=\RouteTimeUB-\ShortestTruckTime{n}{\DepotSink}$ when $\ShortestTruckTime{n}{\DepotSink}<+\infty$, and $\TruckServUB{n}:=-\infty$ otherwise. For drone service, let $\mathcal H_n^{\mathrm{UB}}:=\{h\in\HubNodes:(h,n)\in\DroneArcs,\,\ShortestTruckTime{h}{\DepotSink}<+\infty\}$ and define $\DroneServUB{n}:=\max_{h\in\mathcal H_n^{\mathrm{UB}}}\{\RouteTimeUB-\DroneTravelTime{n}{h}-\ShortestTruckTime{h}{\DepotSink}\}$, where $\max\emptyset:=-\infty$. Finally, the mode-independent bound is $\ServTimeUB{n}:=\max\{\arriveLB{n},\TruckServUB{n},\DroneServUB{n}\}$.

Next we derive valid lower and upper bounds for each term to normalize its contribution to the weighted objective. Because every customer can be served by at least one complete feasible route, as assumed in Section~\ref{sec:prob}, at least one of $\TruckServUB{n}$ and $\DroneServUB{n}$ is finite. Therefore, $\ServTimeUB{n}$ is finite and $\EffServUB{n}:=\min\{\ServTimeUB{n},\ReqServUB{n}\}$ is also finite. Given that $\arriveLB{n}\le\RealizedServTime{n}\le\EffServUB{n}$, valid delay bounds are $\delayLB:=0$ and $\delayUB:=\sum_{n\in\CustNodes}(\EffServUB{n}-\arriveLB{n})^2$.
Without loss of optimality, every used truck serves at least one customer, so $\MaxUsedTruckNum:=\min\{|\Trucks|,|\CustNodes|\}$. A payload-based lower bound is $\MinTruckNum:=\max\{1,\lceil \sum_{n\in\CustNodes}\DemandWeight{n}/\TruckPayload\rceil\}$; if $\MinTruckNum>|\Trucks|$, the instance is infeasible. Let $\TruckArrTimeLB{\DepotSink}:=\min_{q\in\CustNodes\cup\HubNodes}\{\ShortestTruckTime{\DepotSource}{q}+\ShortestTruckTime{q}{\DepotSink}\}$. Then $\returnLB:=\MinTruckNum\TruckArrTimeLB{\DepotSink}$ and $\returnUB:=\MaxUsedTruckNum\RouteTimeUB$.
For operating cost, define $\MandatoryDroneCust:=\{n\in\CustNodes:\ShortestTruckTime{\DepotSource}{n}=+\infty \textnormal{ or } \ShortestTruckTime{n}{\DepotSink}=+\infty\}$ and $\MinDroneFlightNum:=|\MandatoryDroneCust|$. Hence, $\costLB:=\MinTruckNum\TruckCost+\MinDroneFlightNum\DroneCost$ and $\costUB:=\MaxUsedTruckNum\TruckCost+|\CustNodes|\DroneCost$.

The weights $\WeightDelay$, $\WeightReturn$, and $\WeightCost$ specify the relative importance of the normalized squared-delay, total truck-return-time, and operating-cost terms, respectively. Then, the normalization coefficients are $\ObjCoeffDelay:=\WeightDelay/(\delayUB-\delayLB)$, $\ObjCoeffReturn:=\WeightReturn/(\returnUB-\returnLB)$, and $\ObjCoeffCost:=\WeightCost/(\costUB-\costLB)$; if any denominator is zero, the corresponding coefficient is defined as zero.
The resulting weighted objective is
\begin{equation}\label{obj:normalized}
\min\quad
\ObjCoeffDelay(\ObjDelay-\delayLB)
+
\ObjCoeffReturn(\ObjReturn-\returnLB)
+
\ObjCoeffCost(\ObjCost-\costLB),
\end{equation}
where $\WeightDelay,\WeightReturn,\WeightCost>0$ and $\WeightDelay+\WeightReturn+\WeightCost=1$.

\subsection{Path-based formulation for the \AbbrProb}\label{sec:path-form}

The arc-based formulation is useful for defining the operating rules, but its LP relaxation is weak, and the number of arc-indexed variables grows quickly with the number of trucks and drones. We therefore reformulate the problem in route space. A route column represents the complete operation of one truck and its onboard drone fleet.
For any route \(r\) for one truck and its drone fleet, let
$
\TruckRoute{r}=(v_0^r,v_1^r,\ldots,v_{\ell_r}^r)
$
denote its physical truck projection, namely, the ordered sequence of physical nodes visited by the truck. Drone-served customers are not included in this sequence. The route begins and ends at the depot copies, so \(v_0^r=\DepotSource\), \(v_{\ell_r}^r=\DepotSink\), and \(v_q^r\in\CustNodes\cup\HubNodes\) for \(q=1,\ldots,\ell_r-1\). We require the physical truck projection to be elementary:
$
\sum_{q=1}^{\ell_r-1}\mathbf{1}\{v_q^r=i\}\le 1,
$
for any $i\in\CustNodes\cup\HubNodes$.

Let \(\RouteSet\) denote the finite set of feasible routes with elementary physical truck projections that serve at least one customer by either truck or drone. We refer to such routes as nonempty routes.
A route $r\in\RouteSet$ consists of an elementary truck path $\TruckRoute{r}$ from $\DepotSource$ to $\DepotSink$ and a feasible drone-service block $\DroneBlock{h}{r}$ at each visited pad $h$. Each block $\DroneBlock{h}{r}$ contains the customers served by drones launched simultaneously from the truck at $h$. We set $\DroneBlock{h}{r}=\emptyset$ when route $r$ launches no drone from pad $h$.
For each customer $n$ served by route $r$, $\RouteArrTime{n}{r}$ denotes its delivery time, and $\RouteArrTime{\DepotSink}{r}$ denotes the truck's return time. At each visited pad $h$, $\TruckArrTime{h}{r}$ and $\WaitTime{h}{r}$ denote the truck's arrival and waiting times, respectively. Every served customer must satisfy $\RouteArrTime{n}{r}\le\EffServUB{n}$.

Let \(\RouteVisitNode{r}{n}\) equal 1 if customer \(n\) is served by route \(r\), either by the truck or by one of its drones, and 0 otherwise. Define $\RouteServedSet{r}:=\{n\in\CustNodes:\RouteVisitNode{r}{n}=1\}$, which includes both truck- and drone-served customers.
The route cost is defined as
\begin{equation}\label{eq:route-cost-def}
\RouteCost{r}
=
\ObjCoeffDelay
\sum_{n\in\RouteServedSet{r}}
\left(\RouteArrTime{n}{r}-\arriveLB{n}\right)^2
+
\ObjCoeffReturn\RouteArrTime{\DepotSink}{r}
+
\ObjCoeffCost
\left(
\TruckCost
+
\DroneCost
\sum_{h\in\HubNodes}
\left|\DroneBlock{h}{r}\right|
\right),
\end{equation}

Let $\RouteSelect{r}\in\{0,1\}$ indicate whether route $r$ is selected. The \AbbrProb{} is then the set-partitioning problem that minimizes $\sum_{r\in\RouteSet}\RouteCost{r}\RouteSelect{r}$ subject to $\sum_{r\in\RouteSet}\RouteVisitNode{r}{n}\RouteSelect{r}=1$ for all $n\in\CustNodes$ and $\sum_{r\in\RouteSet}\RouteSelect{r}\le|\Trucks|$. The covering constraints ensure that each customer is served exactly once, while the latter constraint enforces the truck-fleet limit; all remaining operational requirements are embedded in the definition of a feasible route.
The full normalized objective \eqref{obj:normalized} equals the route-space objective above plus the constant $\RouteObjShift:=-\ObjCoeffDelay\delayLB-\ObjCoeffReturn\returnLB-\ObjCoeffCost\costLB$.

\section{Branch-Price-and-Cut Algorithm}\label{sec:BPC}

The set-partitioning formulation contains exponentially many routes, so we solve it by branch-price-and-cut (BPC). At branch-and-bound node $m$, let $\BranchFeasibleRoutes{m}\subseteq\RouteSet$ denote the routes satisfying all inherited branching decisions and $\RouteSubset{m}\subseteq\BranchFeasibleRoutes{m}$ the columns currently generated. The restricted master problem (RMP) is the LP relaxation of this set-partitioning model restricted to $\RouteSubset{m}$ and augmented with active subset-row (SR) inequalities.
For each triple $\pi\in\CandidateSRset{m}:={\pi\subseteq\CustNodes:|\pi|=3}$, define $\sigma_{r,\pi}:=\left\lfloor\sum_{n\in\pi}\RouteVisitNode{r}{n}/2\right\rfloor$. For the active set $\ActiveSRset{m}\subseteq\CandidateSRset{m}$, the RMP includes $\sum_{r\in\RouteSubset{m}}\sigma_{r,\pi}\RouteSelect{r}\le1$ for $\pi\in\ActiveSRset{m}$. Let $\mu_n\in\mathbb R$, $\kappa\le0$, and $\nu_\pi\le0$ denote the dual variables associated with the customer-covering, truck-fleet, and active SR constraints, respectively.
Standard pricing generates negative-reduced-cost routes from $\BranchFeasibleRoutes{m}$, while separation adds violated inactive SR inequalities and triggers repricing under the updated dual solution. Farkas pricing is used when the RMP is infeasible. The process continues until neither pricing nor separation adds a route or inequality, after which the node is fathomed or branched on. For brevity, we write $\SRset$ for $\ActiveSRset{m}$ below. Pricing is formulated as an elementary shortest path problem with resource constraints (ESPPRC) on the transformed network \citep{dror1994note}.

\subsection{Network transformation}\label{sec:net-trans}
The transformed network encodes both the truck path and the drone-service blocks within the same source--sink path, while timing, waiting, payload, and cost are tracked through path resources. This construction provides the path representation required for the label-setting pricing algorithm.

For each $h\in\HubNodes$, define $\DroneAccessNodes{h}:=\{j\in\CustNodes:(h,j)\in\DroneArcs\}$ and $\TruckAccessNodes{h}:=\{u\in\Nodes:(h,u)\in\TruckArcs\}$, the customers accessible by drone from $h$ and the physical nodes reachable by truck from $h$, respectively.
For each pad $h$, we order the customers in $\DroneAccessNodes{h}$ by increasing customer index and write $i\prec_h j$ whenever customer $i$ precedes customer $j$. This ordering gives each set of simultaneous drone sorties a unique path representation.

To distinguish drone service from direct truck service, each customer that is drone-accessible from at least one synchronization pad is duplicated once. Specifically, as shown in \autoref{fig:net-trans}, $\CustNodesDuplicate:=\{\Duplicate{j}:j\in\bigcup_{h\in\HubNodes}\DroneAccessNodes{h}\}$.
Visiting the original node $j$ represents direct truck service, whereas visiting $\Duplicate{j}$ represents drone service. For each $\Duplicate{j}\in\CustNodesDuplicate$, define $\Origin{\Duplicate{j}}:=j$. A duplicated node does not identify its launch pad; instead, the launch pad is the physical pad immediately preceding the corresponding duplicated-node block.
Using this duplicated-node set, define the transformed network as $\TransGraph=(\Duplicate{\Nodes},\Duplicate{\Arcs})$, where $\Duplicate{\Nodes}:=\Nodes\cup\CustNodesDuplicate$ and $\Duplicate{\Arcs}:=\TruckArcs\cup\CustHubArcsDuplicate\cup\CustDupCustDupArcs\cup\CustDupCustArcs$.
The additional arc families are $\CustHubArcsDuplicate:=\{(h,\Duplicate{j}):h\in\HubNodes,j\in\DroneAccessNodes{h}\}$, $\CustDupCustDupArcs:=\{(\Duplicate{i},\Duplicate{j}):\exists h\in\HubNodes,i,j\in\DroneAccessNodes{h},i\prec_h j\}$, and $\CustDupCustArcs:=\{(\Duplicate{i},u):\exists h\in\HubNodes,i\in\DroneAccessNodes{h},u\in\TruckAccessNodes{h},u\neq i\}$.

An arc $(h,\Duplicate{j})\in\CustHubArcsDuplicate$ enters a drone-service block at pad $h$ and selects customer $j$ for drone service. An arc $(\Duplicate{i},\Duplicate{j})\in\CustDupCustDupArcs$ adds customer $j$ to the same block. An arc $(\Duplicate{i},u)\in\CustDupCustArcs$ exits the drone-service block and resumes the truck route; during label extension, $u$ must be truck-accessible from the active pad.
Accordingly, a transformed path segment of the form
$
h
\rightarrow
\Duplicate{j_1}
\rightarrow
\cdots
\rightarrow
\Duplicate{j_s}
\rightarrow
u
$
represents a truck visit to pad $h$, the simultaneous drone service of customers $j_1,\ldots,j_s$, and the subsequent truck movement from $h$ to $u$. The physical pad $h$ preceding the duplicated-node block identifies the launch pad for all customers in that block. The sequential appearance of the duplicated nodes is purely an encoding convention and does not imply that the corresponding drone sorties are performed sequentially.

\begin{figure}[tbp]
    \centering
    \includegraphics[width=0.9\linewidth]{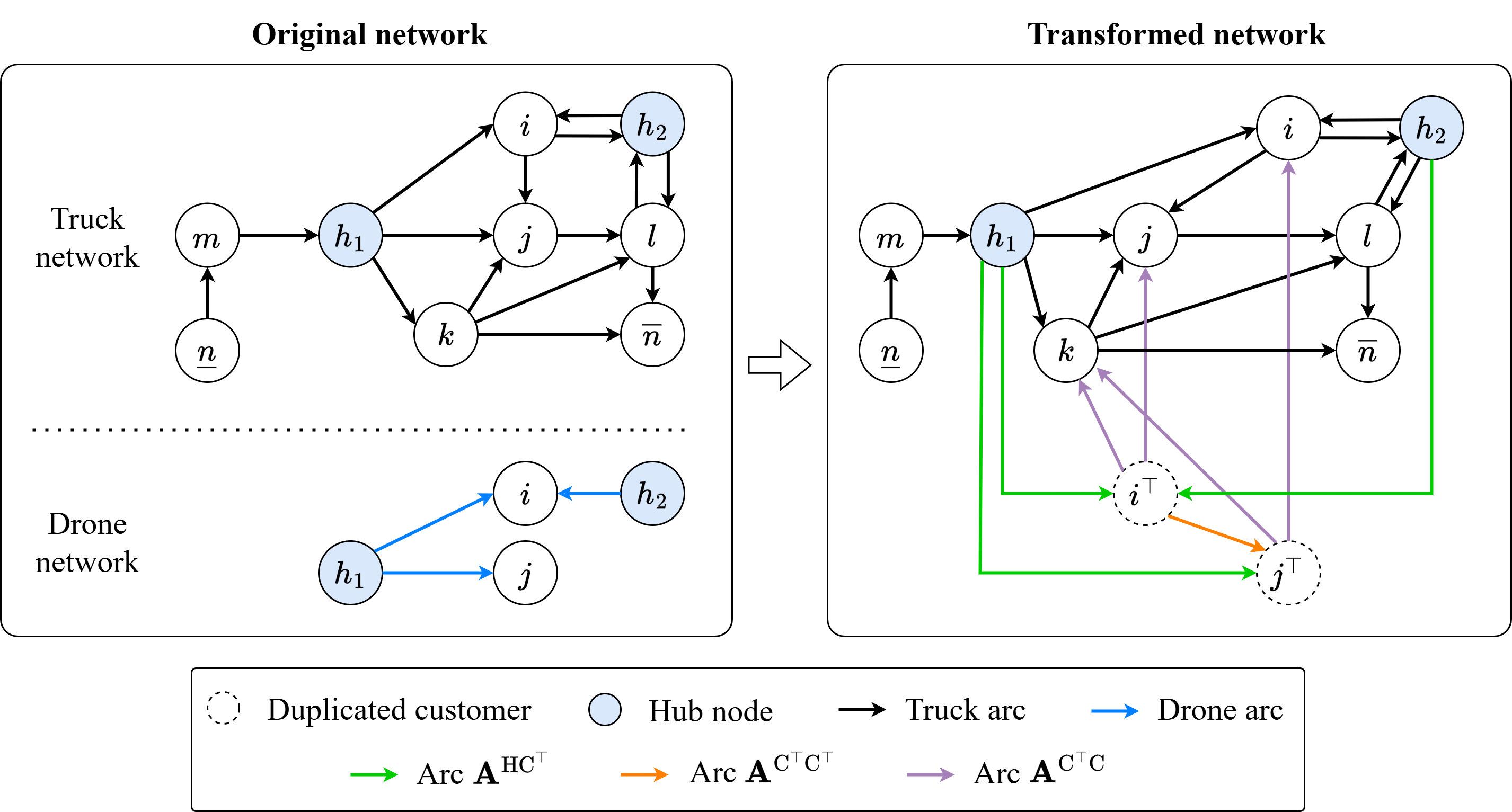}
    \caption{Illustration of the transformed network.}
    \label{fig:net-trans}
\end{figure}

Let $\TravelTime{i}{j}$ denote the time attribute of $(i,j)\in\Duplicate{\Arcs}$. For a truck arc, $\TravelTime{i}{j}:=\TruckTravelTime{i}{j}$. For an entry arc $(h,\Duplicate{j})\in\CustHubArcsDuplicate$, $\TravelTime{h}{\Duplicate{j}}:=\DroneTripTime{h}{j}$. For a transformed path prefix whose active pad is $h$, a within-block arc satisfies $\TravelTime{\Duplicate{i}}{\Duplicate{j}}:=\DroneTripTime{h}{j}$, while an exit arc satisfies $\TravelTime{\Duplicate{i}}{u}:=\TruckTravelTime{h}{u}$. In the label-setting algorithm, this pad is maintained by the active-pad resource. Drone-block arc attributes record individual sortie durations and are not accumulated on the truck clock; instead, the truck waiting resource is their maximum, while customer $j$ is served at $\TruckArrTime{h}{r}+\DroneTravelTime{h}{j}$.

\begin{theorem}[Cost-preserving route encoding] \label{thm:cost-preserving-encoding}
Under Assumptions~(A1)--(A6) and the elementary requirement, every feasible truck--drone route has a unique canonical source--sink path representation in $\TransGraph$ when identical drones are treated as indistinguishable. This representation preserves route feasibility and cost.
\end{theorem}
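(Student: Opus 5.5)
We will prove the statement by constructing an explicit encoding map $\phi$ from feasible routes (modulo drone relabeling) to source--sink paths in $\TransGraph$, and a decoding map $\psi$ from admissible paths back to routes. We will then show that $\psi\circ\phi$ is the identity on equivalence classes, and that $\phi$ is injective with a unique canonical image.

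The first step is to fix what a route is when identical drones are indistinguishable. By (A4)--(A6) and the launch constraints, the drone assignment inside a pad visit only matters through the block $\DroneBlock{h}{r}$. Each drone serves at most one customer per sortie, and at most $\DroneFleetSize$ drones launch simultaneously. Hence a route is fully described by two things: its elementary truck projection $\TruckRoute{r}=(v_0^r,\ldots,v_{\ell_r}^r)$, and, for each visited pad $h=v_q^r$, a set $\DroneBlock{h}{r}\subseteq\DroneAccessNodes{h}$ with $|\DroneBlock{h}{r}|\le\DroneFleetSize$.

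Given such data, $\phi(r)$ is formed by replacing each visited pad $v_q^r=h$ with a nonempty block by the segment $h\to\Duplicate{j_1}\to\cdots\to\Duplicate{j_s}\to v_{q+1}^r$. Here $j_1\prec_h\cdots\prec_h j_s$ lists $\DroneBlock{h}{r}$ in increasing index. Every arc used exists by construction:
\begin{itemize}
\item $(h,\Duplicate{j_1})\in\CustHubArcsDuplicate$;
\item $(\Duplicate{j_t},\Duplicate{j_{t+1}})\in\CustDupCustDupArcs$, because $j_t\prec_h j_{t+1}$ within $\DroneAccessNodes{h}$;
\item $(\Duplicate{j_s},v_{q+1}^r)\in\CustDupCustArcs$, because $(h,v_{q+1}^r)\in\TruckArcs$. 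We also need $v_{q+1}^r\neq j_s$, which holds since (A1) forbids serving $j_s$ by both truck and drone.
\end{itemize}
Pads with empty blocks, and truck arcs generally, are kept unchanged.

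The path is elementary. Original nodes appear at most once because $\TruckRoute{r}$ is elementary. Duplicated nodes appear at most once because each customer is served once (A1), so no $\Duplicate{j}$ occurs in two blocks or twice in a block. Uniqueness follows because the increasing-index ordering is the only ordering admitted by $\CustDupCustDupArcs$ once the pad is fixed. So any path encoding the same block sets must coincide with $\phi(r)$.

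The decoding $\psi$ reads a path left to right and tracks the active pad, which is the last physical pad before a run of duplicated nodes. A run of duplicated nodes can only start after a pad, since $\CustHubArcsDuplicate$ arcs leave only pads. Each maximal run therefore defines a block at that pad, and deleting the duplicated nodes recovers a physical path whose consecutive arcs lie in $\TruckArcs$.

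The main obstacle is a subtlety in the arc sets. $\CustDupCustDupArcs$ and $\CustDupCustArcs$ are defined with an existential over $h$, so a raw path could chain duplicated nodes justified by different pads, or exit to a node not truck-accessible from the actual active pad. We will handle this by restricting the admissible path set to paths that satisfy the label-extension rules stated in the text, namely that every within-block node and every exit node is accessible from the active pad. We will then check that $\phi(r)$ always satisfies these rules and that $\psi$ maps every admissible path to a physically valid route. Capacity $\le\DroneFleetSize$, payload, endurance and $\EffServUB{n}$ are enforced as resources, so feasibility is preserved in both directions.

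Cost preservation follows by comparing resources with the terms of \eqref{eq:route-cost-def}:
\begin{itemize}
\item The truck clock advances only on truck and exit arcs, by $\TruckTravelTime{h}{u}$ in the exit case. At each pad it adds the waiting resource $\max_{j\in\DroneBlock{h}{r}}\DroneTripTime{h}{j}$, or $0$ for an empty block, which matches the definition of $\WaitTime{h}{r}$ from (A5). Hence pad arrival times and $\RouteArrTime{\DepotSink}{r}$ coincide.
\item A drone-served customer gets time $\TruckArrTime{h}{r}+\DroneTravelTime{h}{j}$, and a truck-served customer gets the clock value. So the delay terms agree.
\item The number of duplicated nodes equals $\sum_h|\DroneBlock{h}{r}|$, which gives the drone cost; the truck cost $\TruckCost$ is counted once.
\end{itemize}
Therefore the cost accumulated along $\phi(r)$ equals $\RouteCost{r}$, and the same holds for $\psi$. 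This completes the bijection.
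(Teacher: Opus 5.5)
Your proposal is correct and follows essentially the same argument as the paper. Both order each drone block by $\prec_h$, check that the inserted arcs lie in $\CustHubArcsDuplicate$, $\CustDupCustDupArcs$, and $\CustDupCustArcs$, obtain uniqueness from the forced canonical order together with drone indistinguishability, and compare service times, pad waiting, return time, and sortie counts resource by resource. Your decoding map $\psi$ and the admissibility restriction (using the active-pad checks to handle the existential definitions of the within-block and exit arcs) go beyond the paper's proof, which only establishes the encoding direction and leaves those checks implicit in the label-extension rules.
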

\begin{proof} See Appendix~\ref{app:proof:thm-cost-preserving-encoding}. \end{proof}

\subsubsection{Complexity effects of the network transformation}
\label{sec:complexity}
The transformation enables path-based pricing but introduces additional nodes and arcs that may increase label extensions. We therefore quantify its size in terms of pad accessibility and arc sharing.

Let $\NumCust:=|\CustNodes|$, $\NumHub:=|\HubNodes|$, $\DroneAccessNum{h}:=|\DroneAccessNodes{h}|$, and $\TruckAccessNum{h}:=|\TruckAccessNodes{h}|$. The number of distinct drone-accessible customers is $Q:=|\bigcup_{h\in\HubNodes}\DroneAccessNodes{h}|$. Since each is duplicated once, $|\Duplicate{\Nodes}|=|\Nodes|+Q\le|\Nodes|+\min\{\NumCust,\sum_{h\in\HubNodes}\DroneAccessNum{h}\}$. 
Entry arcs $(h,\Duplicate{j})$ are distinct for different pads because the pad is one of their endpoints. Each within-block arc $(\Duplicate{i},\Duplicate{j})$ or exit arc $(\Duplicate{i},u)$ is included only once in the transformed network, even when it can arise from several pads.
Under the common customer ordering in Section~\ref{sec:net-trans}, the arc set therefore satisfies
\begin{equation}\label{eq:transformed_arc_general_bound}
|\Duplicate{\Arcs}| \le |\TruckArcs| + \sum_{h\in\HubNodes}\DroneAccessNum{h} + \min\left\{\binom{Q}{2},\,\sum_{h\in\HubNodes}\binom{\DroneAccessNum{h}}{2}\right\} + \min\left\{Q|\Nodes|,\,\sum_{h\in\HubNodes}\DroneAccessNum{h}\TruckAccessNum{h}\right\}.
\end{equation}

The three added terms bound the entry, within-block, and exit arc families, respectively. Each minimum accounts for arc sharing by combining the pad-wise count with a bound on the number of distinct endpoint pairs. If $\DroneAccessNum{h}$ and $\TruckAccessNum{h}$ are uniformly bounded, the number of additional arcs grows at most linearly with $\NumHub$. In dense instances, the $Q^2$ and $Q|\Nodes|$ terms may dominate even with the canonical ordering introduced in Section~\ref{sec:net-trans}. Having quantified the size of the transformed network, we next describe the pricing-compatible branching rules used in BPC.

\subsection{Branching strategies}\label{sec:branching}
\subsubsection{Customer-pair branching.}
For two distinct customers $p,q\in\CustNodes$, define the same-route flow
$
\SameRouteFlow{p}{q}
:=
\sum_{r\in\RouteSubset{m}}
\RouteVisitNode{r}{p}\RouteVisitNode{r}{q}\RouteSelect{r},
$
which, in an integer solution, equals 1 if $p$ and $q$ are served by the same selected route, and 0 otherwise. If $0<\SameRouteFlow{p}{q}<1$, we create two child nodes:
$
\BranchTogether{pq}:=
    \left\{
    r\in\BranchFeasibleRoutes{m}:
    \RouteVisitNode{r}{p}=\RouteVisitNode{r}{q}
    \right\},
$
and
$
\BranchSeparate{pq}
    :=
    \left\{
    r\in\BranchFeasibleRoutes{m}:
    \RouteVisitNode{r}{p}+\RouteVisitNode{r}{q}\le 1
    \right\}.
$
Here, $\BranchTogether{pq}$ denotes the routes that must serve either both $p$ and $q$ or neither of them, whereas $\BranchSeparate{pq}$ represents the routes serving at most one of them.
The two route sets may overlap on routes that serve neither $p$ nor $q$. Nevertheless, the corresponding integer solution spaces are disjoint and exhaustive: in every integer solution, the unique selected route covering $p$ either also covers $q$, or $q$ is covered by a different selected route.
During pricing, the branch $\BranchTogether{pq}$ is enforced at completion by accepting only routes that represent both customers or neither customer. The branch $\BranchSeparate{pq}$ is enforced locally by forbidding an extension that represents $q$ after $p$ has already been represented, and vice versa. Moreover, in the branch $\BranchTogether{pq}$, a partial label that has included $p$ but not $q$ may be pruned if $q$ can no longer be served feasibly, e.g., because its service-time limit or payload requirement cannot be satisfied, or inherited branch restrictions prohibit all remaining service options.

\subsubsection{Customer-conditioned transformed-arc branching.}
If no fractional customer-pair flow exists while the RMP solution remains fractional, we branch on the transformed paths. For each route $r\in\RouteSet$ and arc $(i,j)\in\Duplicate{\Arcs}$, let $\RouteTravelArc{r}{i}{j}$ be the binary indicator that equals 1 if route $r$ uses arc $(i,j)$, and equals 0 otherwise.
Then, for a customer $p\in\CustNodes$ and an arc $(i,j)\in\Duplicate{\Arcs}$, define the customer-conditioned transformed-arc flow
$
\TransArcFlow{p}{i}{j}
:=
\sum_{r\in\RouteSubset{m}}
\RouteVisitNode{r}{p}
\RouteTravelArc{r}{i}{j}
\RouteSelect{r},
$
to be the total LP weight of routes that both serve customer $p$ and traverse transformed arc $(i,j)$. Because $p$ is covered exactly once, $\TransArcFlow{p}{i}{j}$ equals $1$ in an integer solution if the selected route covering $p$ traverses $(i,j)$, and $0$ otherwise.
If $0<\TransArcFlow{p}{i}{j}<1$, we create the two child nodes
$
\BranchArcDown{p}{i}{j}
:=
\left\{
r\in\BranchFeasibleRoutes{m}:
\RouteVisitNode{r}{p}
+
\RouteTravelArc{r}{i}{j}
\le 1
\right\}
$
and
$
\BranchArcUp{p}{i}{j}
:=
\left\{
r\in\BranchFeasibleRoutes{m}:
\RouteVisitNode{r}{p}
\le
\RouteTravelArc{r}{i}{j}
\right\},
$
where $\BranchArcDown{p}{i}{j}$ forbids the route covering $p$ from traversing arc $(i,j)$, while $\BranchArcUp{p}{i}{j}$ requires that route to traverse $(i,j)$. Routes not serving $p$ remain admissible in both children.
This branching rule guarantees completeness because any fractional route serving customer $p$ implies another positive route also serving $p$. Since distinct routes have unique path representations in the transformed network, they differ on at least one transformed arc $(i,j)$, yielding $0<\TransArcFlow{p}{i}{j}<1$.

\subsection{Pricing problem and label setting algorithm}\label{sec:label-setting}

At node $m$, pricing minimizes $\overline c_r=\RouteCost{r}-\sum_{n\in\CustNodes}\RouteVisitNode{r}{n}\mu_n-\sum_{\SRsubset\in\ActiveSRset{m}}\sigma_{r,\SRsubset}\nu_{\SRsubset}-\kappa$ over nonempty routes $r\in\BranchFeasibleRoutes{m}$.
A forward label records its partial path, covered customers, visited physical nodes, payload, timing, active pad, drone count, SR counters, branch state, and reduced cost.
Dominance rules compare feasible completions, including labels with different transformed endpoints but the same physical truck location, to discard labels whose every feasible completion can be matched by another label at no greater reduced cost.
To avoid extending labels that cannot generate a negative-reduced-cost route, we compute a safe lower bound on the reduced cost of any feasible completion. A label is discarded if this bound is nonnegative.
Appendices~\ref{app:label-setting-details} and~\ref{app:accel-full} give the exact conditions, recursions, and proofs' references.

\subsubsection{Farkas pricing.}\label{sec:farkas-pricing}

For an infeasible RMP, the same feasible-extension machinery searches for a route violating its Farkas certificate. A violating route is inserted; the full node master is declared infeasible only when no such route exists. Farkas reduced costs contain extreme rays, requiring the separate dominance and pruning conditions in Appendix~\ref{app:farkas-details}.

\subsubsection{Parallel and primal procedures.}\label{sec:accel-summary}

Pricing initially partitions routes by their first physical node and assigns the subsets to different parallel processes using a workload score. Dynamic splitting transfers complete open-label states to idle processes; all processes use the same dual solution, cuts, and branching restrictions. Closure requires all tasks to be exhausted or safely closed, with no workload transfer pending. For the primal procedures, a time-limited solve of the compact formulation supplies initial route columns when available. During the BPC search, the integer set-partitioning model is also solved over the accumulated branch-feasible route pool to obtain improved incumbent solutions. These scheduling and primal procedures are specified in Appendix~\ref{app:accel-full}.

\section{Computational Experiments}\label{sec:exp}
We evaluate the proposed BPC on 24 synthetic instances generated under two customer spatial layouts, purely sparse (PS) and purely clustered (PC), and three profiles $(|\CustNodes|,|\Trucks|,|\HubNodes|,\DroneFleetSize)\in\{(5,2,2,4),(20,4,3,4),(30,6,3,4)\}$. These profiles are referred to as small, medium, and large, respectively. For each layout--profile combination, four instances are generated, yielding 24 instances in total. Every instance is solved by both the proposed BPC algorithm and Gurobi applied directly to the compact arc-based MIQP using identical instance data and objective parameters. The detailed instance-generation procedure and computational settings are provided in Appendix~\ref{app:test-instance-generation}.
The implementation is written in Python~3.11.7 and uses Gurobi~12.0.1 on a Windows workstation with an AMD Ryzen~5~5600 processor. The direct MIQP uses Gurobi's default settings with multi-threading enabled, whereas BPC uses 12 parallel pricing processes.

\subsection{Overall Computational Performance}
\autoref{tab:small-scale-results}-\autoref{tab:large-scale-results} report the results for the three instance scales. ``Inst.'' and ``Patt.'' denote the instance and customer pattern (PS: purely sparse; PC: purely clustered), and $|\Duplicate{\Arcs}|$ is the number of transformed-network arcs. ``MIQP'' and ``BPC'' refer to Gurobi applied to the compact arc-based MIQP and the proposed branch-price-and-cut algorithm, respectively. ``Inc.'', ``LB'', ``Gap'', and ``Time'' denote the final incumbent, lower bound, relative optimality gap, and execution time (s). ``Root UB'' and ``Root LB'' are the root warm-start incumbent and root-node lower bound. For BPC, ``Cols.\ W/S/F'' gives the numbers of columns added by warm start, standard pricing, and Farkas pricing; ``\# SR'' counts subset-row inequalities; ``Br.\ P/A'' counts customer-pair and customer-conditioned transformed-arc branching decisions; and ``\# Split'' and ``\# Nodes'' count pricing-task repartitioning events and processed branch-and-bound nodes. A dash indicates an inapplicable or unavailable statistic.

\begin{table}[htbp]
\centering
\scriptsize
\renewcommand{\arraystretch}{0.9}
\setlength{\tabcolsep}{2pt}
\caption{Solution Details of Small-Scale Instances
$(|\CustNodes|,|\Trucks|,|\HubNodes|,\DroneFleetSize)=(5,2,2,4)$}
\label{tab:small-scale-results}
\begin{tabular*}{\textwidth}{
@{\extracolsep{\fill}}
cccccrrrcrccccr
@{}}
\toprule
\multirow[c]{2}{*}{Inst.}
& \multirow[c]{2}{*}{Patt.}
& \multirow[c]{2}{*}{$|\Duplicate{\Arcs}|$}
& \multirow[c]{2}{*}{Method}
& \multirow[c]{2}{*}{Inc.}
& \multicolumn{1}{c}{\multirow[c]{2}{*}{LB}}
& \multicolumn{1}{c}{\multirow[c]{2}{*}{Gap}}
& \multicolumn{1}{c}{Time}
& Root
& \multicolumn{1}{c}{Root}
& Cols.
& \#
& Br.
& \#
& \multicolumn{1}{c}{\#} \\
& & & & & &
& \multicolumn{1}{c}{(s)}
& UB
& \multicolumn{1}{c}{LB}
& W/S/F
& SR
& P/A
& Split
& \multicolumn{1}{c}{Nodes} \\
\midrule
\multirow[c]{2}{*}{S1}
& \multirow[c]{2}{*}{PS}
& \multirow[c]{2}{*}{31}
& MIQP & 0.263 & 0.263 & 0.00\% & \textbf{0.23} & - & 0.126 & - & - & - & - & \num{137} \\
& & & BPC & 0.263 & 0.263 & 0.00\% & 4.26 & 0.263 & \textbf{0.263} & 2/9/0 & 0 & 0/0 & 0 & \num{1} \\
\midrule
\multirow[c]{2}{*}{S2}
& \multirow[c]{2}{*}{PS}
& \multirow[c]{2}{*}{23}
& MIQP & 0.293 & 0.293 & 0.00\% & \textbf{0.13} & - & 0.171 & - & - & - & - & \num{33} \\
& & & BPC & 0.293 & 0.293 & 0.00\% & 4.91 & 0.293 & \textbf{0.293} & 2/9/0 & 0 & 0/0 & 0 & \num{1} \\
\midrule
\multirow[c]{2}{*}{S3}
& \multirow[c]{2}{*}{PS}
& \multirow[c]{2}{*}{23}
& MIQP & 0.258 & 0.258 & 0.00\% & \textbf{0.05} & - & 0.188 & - & - & - & - & \num{1} \\
& & & BPC & 0.258 & 0.258 & 0.00\% & 4.47 & 0.258 & \textbf{0.258} & 2/5/0 & 0 & 0/0 & 0 & \num{1} \\
\midrule
\multirow[c]{2}{*}{S4}
& \multirow[c]{2}{*}{PS}
& \multirow[c]{2}{*}{23}
& MIQP & 0.272 & 0.272 & 0.00\% & \textbf{0.12} & - & 0.271 & - & - & - & - & \num{35} \\
& & & BPC & 0.272 & 0.272 & 0.00\% & 4.42 & 0.272 & \textbf{0.272} & 2/8/0 & 0 & 0/0 & 0 & \num{1} \\
\midrule
\multirow[c]{2}{*}{S5}
& \multirow[c]{2}{*}{PC}
& \multirow[c]{2}{*}{31}
& MIQP & 0.007 & 0.007 & 0.00\% & \textbf{0.05} & - & 0.007 & - & - & - & - & \num{1} \\
& & & BPC & 0.007 & 0.007 & 0.00\% & 4.69 & 0.007 & 0.007 & 1/10/0 & 0 & 0/0 & 0 & \num{1} \\
\midrule
\multirow[c]{2}{*}{S6}
& \multirow[c]{2}{*}{PC}
& \multirow[c]{2}{*}{33}
& MIQP & 0.187 & 0.187 & 0.00\% & \textbf{0.09} & - & 0.187 & - & - & - & - & \num{1} \\
& & & BPC & 0.187 & 0.187 & 0.00\% & 4.34 & 0.187 & 0.187 & 2/36/0 & 3 & 0/0 & 0 & \num{1} \\
\midrule
\multirow[c]{2}{*}{S7}
& \multirow[c]{2}{*}{PC}
& \multirow[c]{2}{*}{35}
& MIQP & 0.005 & 0.005 & 0.00\% & \textbf{0.11} & - & 0.005 & - & - & - & - & \num{1} \\
& & & BPC & 0.005 & 0.005 & 0.00\% & 4.60 & 0.005 & 0.005 & 1/26/0 & 0 & 0/0 & 0 & \num{1} \\
\midrule
\multirow[c]{2}{*}{S8}
& \multirow[c]{2}{*}{PC}
& \multirow[c]{2}{*}{30}
& MIQP & 0.222 & 0.222 & 0.00\% & \textbf{0.25} & - & 0.222 & - & - & - & - & \num{50} \\
& & & BPC & 0.222 & 0.222 & 0.00\% & 5.97 & 0.222 & 0.222 & 2/19/0 & 0 & 0/0 & 0 & \num{1} \\
\midrule
\multirow[c]{2}{*}{Average}
& \multirow[c]{2}{*}{-}
& \multirow[c]{2}{*}{28.6}
& MIQP & 0.188 & 0.188 & 0.00\% & \textbf{0.13} & - & 0.147 & - & - & - & - & \num{32.4} \\
& & & BPC & 0.188 & 0.188 & 0.00\% & 4.71 & 0.188 & \textbf{0.188} & 1.8/15.3/0.0 & 0.4 & 0.0/0.0 & 0.0 & \num{1.0} \\
\bottomrule
\end{tabular*}
\end{table}

\begin{table}[htbp]
\centering
\scriptsize
\renewcommand{\arraystretch}{0.9}
\setlength{\tabcolsep}{2pt}
\caption{Solution Details of Medium-Scale Instances
$(|\CustNodes|,|\Trucks|,|\HubNodes|,\DroneFleetSize)=(20,4,3,4)$}
\label{tab:medium-scale-results}
\begin{tabular*}{\textwidth}{
@{\extracolsep{\fill}}
cccccrrrcrccccr
@{}}
\toprule
\multirow[c]{2}{*}{Inst.}
& \multirow[c]{2}{*}{Patt.}
& \multirow[c]{2}{*}{$|\Duplicate{\Arcs}|$}
& \multirow[c]{2}{*}{Method}
& \multirow[c]{2}{*}{Inc.}
& \multicolumn{1}{c}{\multirow[c]{2}{*}{LB}}
& \multicolumn{1}{c}{\multirow[c]{2}{*}{Gap}}
& \multicolumn{1}{c}{Time}
& Root
& \multicolumn{1}{c}{Root}
& Cols.
& \#
& Br.
& \#
& \multicolumn{1}{c}{\#} \\
& & & & & &
& \multicolumn{1}{c}{(s)}
& UB
& \multicolumn{1}{c}{LB}
& W/S/F
& SR
& P/A
& Split
& \multicolumn{1}{c}{Nodes} \\
\midrule
\multirow[c]{2}{*}{M1}
& \multirow[c]{2}{*}{PS}
& \multirow[c]{2}{*}{168}
& MIQP & 0.186 & 0.186 & 0.00\% & \textbf{63.30} & - & -0.036 & - & - & - & - & \num{23495} \\
& & & BPC & 0.186 & 0.186 & 0.00\% & 75.40 & 0.186 & \textbf{0.186} & 4/341/0 & 0 & 0/0 & 0 & \num{1} \\
\midrule
\multirow[c]{2}{*}{M2}
& \multirow[c]{2}{*}{PS}
& \multirow[c]{2}{*}{208}
& MIQP & 0.167 & 0.109 & 34.99\% & 1800.00 & - & -0.098 & - & - & - & - & \num{148340} \\
& & & BPC & 0.167 & \textbf{0.167} & \textbf{0.00\%} & \textbf{85.33} & 0.167 & \textbf{0.167} & 4/495/0 & 0 & 0/0 & 0 & \num{1} \\
\midrule
\multirow[c]{2}{*}{M3}
& \multirow[c]{2}{*}{PS}
& \multirow[c]{2}{*}{211}
& MIQP & 0.187 & 0.187 & 0.00\% & \textbf{27.52} & - & -0.087 & - & - & - & - & \num{39124} \\
& & & BPC & 0.187 & 0.187 & 0.00\% & 58.85 & 0.187 & \textbf{0.187} & 4/346/0 & 7 & 0/0 & 0 & \num{1} \\
\midrule
\multirow[c]{2}{*}{M4}
& \multirow[c]{2}{*}{PS}
& \multirow[c]{2}{*}{216}
& MIQP & 0.187 & 0.031 & 83.28\% & 1800.00 & - & -0.107 & - & - & - & - & \num{557907} \\
& & & BPC & \textbf{0.186} & \textbf{0.186} & \textbf{0.00\%} & \textbf{72.91} & 0.186 & \textbf{0.186} & 4/297/0 & 0 & 0/0 & 0 & \num{1} \\
\midrule
\multirow[c]{2}{*}{M5}
& \multirow[c]{2}{*}{PC}
& \multirow[c]{2}{*}{213}
& MIQP & 0.127 & -0.050 & 139.63\% & 1800.00 & - & -0.162 & - & - & - & - & \num{155867} \\
& & & BPC & \textbf{0.123} & \textbf{0.123} & \textbf{0.00\%} & \textbf{92.01} & 0.127 & \textbf{0.123} & 4/505/0 & 23 & 0/0 & 0 & \num{1} \\
\midrule
\multirow[c]{2}{*}{M6}
& \multirow[c]{2}{*}{PC}
& \multirow[c]{2}{*}{218}
& MIQP & 0.059 & -0.034 & 157.62\% & 1800.00 & - & -0.134 & - & - & - & - & \num{572882} \\
& & & BPC & 0.059 & \textbf{0.059} & \textbf{0.00\%} & \textbf{303.48} & 0.062 & \textbf{0.059} & 3/952/0 & 32 & 0/0 & 39 & \num{1} \\
\midrule
\multirow[c]{2}{*}{M7}
& \multirow[c]{2}{*}{PC}
& \multirow[c]{2}{*}{168}
& MIQP & 0.120 & 0.120 & 0.00\% & \textbf{287.44} & - & -0.093 & - & - & - & - & \num{160916} \\
& & & BPC & 0.120 & 0.120 & 0.00\% & 1414.80 & 0.121 & \textbf{0.106} & 4/2412/0 & 371 & 45/0 & 4 & \num{91} \\
\midrule
\multirow[c]{2}{*}{M8}
& \multirow[c]{2}{*}{PC}
& \multirow[c]{2}{*}{168}
& MIQP & 0.174 & 0.174 & 0.00\% & 117.75 & - & -0.079 & - & - & - & - & \num{183760} \\
& & & BPC & 0.174 & 0.174 & 0.00\% & \textbf{76.50} & 0.174 & \textbf{0.174} & 4/331/0 & 24 & 0/0 & 0 & \num{1} \\
\midrule
\multirow[c]{2}{*}{Average}
& \multirow[c]{2}{*}{-}
& \multirow[c]{2}{*}{196.3}
& MIQP & 0.151 & 0.090 & 51.94\% & 962.18 & - & -0.100 & - & - & - & - & \num{230286.4} \\
& & & BPC & \textbf{0.150} & \textbf{0.150} & \textbf{0.00\%} & \textbf{272.41} & 0.151 & \textbf{0.149} & 3.9/709.9/0.0 & 57.1 & 5.6/0.0 & 5.4 & \num{12.3} \\
\bottomrule
\end{tabular*}
\end{table}

\begin{table}[htbp]
\centering
\scriptsize
\renewcommand{\arraystretch}{0.9}
\setlength{\tabcolsep}{2pt}
\caption{Solution Details of Large-Scale Instances
$(|\CustNodes|,|\Trucks|,|\HubNodes|,\DroneFleetSize)=(30,6,3,4)$}
\label{tab:large-scale-results}
\begin{tabular*}{\textwidth}{
@{\extracolsep{\fill}}
cccccrrrcrccccr
@{}}
\toprule
\multirow[c]{2}{*}{Inst.}
& \multirow[c]{2}{*}{Patt.}
& \multirow[c]{2}{*}{$|\Duplicate{\Arcs}|$}
& \multirow[c]{2}{*}{Method}
& \multirow[c]{2}{*}{Inc.}
& \multicolumn{1}{c}{\multirow[c]{2}{*}{LB}}
& \multicolumn{1}{c}{\multirow[c]{2}{*}{Gap}}
& \multicolumn{1}{c}{Time}
& Root
& \multicolumn{1}{c}{Root}
& Cols.
& \#
& Br.
& \#
& \multicolumn{1}{c}{\#} \\
& & & & & &
& \multicolumn{1}{c}{(s)}
& UB
& \multicolumn{1}{c}{LB}
& W/S/F
& SR
& P/A
& Split
& \multicolumn{1}{c}{Nodes} \\
\midrule
\multirow[c]{2}{*}{L1}
& \multirow[c]{2}{*}{PS}
& \multirow[c]{2}{*}{371}
& MIQP & - & -0.059 & - & 1800.00 & - & -0.117 & - & - & - & - & \num{46742} \\
& & & BPC & \textbf{0.163} & \textbf{0.163} & \textbf{0.00\%} & \textbf{395.87} & - & \textbf{0.162} & 0/625/262 & 74 & 1/0 & 0 & \num{3} \\
\midrule
\multirow[c]{2}{*}{L2}
& \multirow[c]{2}{*}{PS}
& \multirow[c]{2}{*}{404}
& MIQP & 0.111 & -0.095 & 185.21\% & 1800.00 & - & -0.105 & - & - & - & - & \num{25650} \\
& & & BPC & 0.111 & \textbf{0.109} & \textbf{1.81\%} & 1800.00 & - & \textbf{0.106} & 0/1259/380 & 117 & 5/0 & 73 & \num{7} \\
\midrule
\multirow[c]{2}{*}{L3}
& \multirow[c]{2}{*}{PS}
& \multirow[c]{2}{*}{360}
& MIQP & 0.134 & -0.069 & 151.63\% & 1800.00 & - & -0.106 & - & - & - & - & \num{52155} \\
& & & BPC & \textbf{0.127} & \textbf{0.127} & \textbf{0.00\%} & \textbf{254.87} & 0.135 & \textbf{0.127} & 6/858/0 & 25 & 0/0 & 0 & \num{1} \\
\midrule
\multirow[c]{2}{*}{L4}
& \multirow[c]{2}{*}{PS}
& \multirow[c]{2}{*}{346}
& MIQP & 0.142 & -0.055 & 138.58\% & 1800.00 & - & -0.072 & - & - & - & - & \num{36401} \\
& & & BPC & \textbf{0.140} & \textbf{0.140} & \textbf{0.00\%} & \textbf{114.08} & 0.220 & \textbf{0.140} & 6/836/0 & 0 & 0/0 & 0 & \num{1} \\
\midrule
\multirow[c]{2}{*}{L5}
& \multirow[c]{2}{*}{PC}
& \multirow[c]{2}{*}{545}
& MIQP & 0.062 & -0.100 & 260.46\% & 1800.00 & - & -0.108 & - & - & - & - & \num{54534} \\
& & & BPC & 0.062 & \textbf{0.062} & \textbf{0.00\%} & \textbf{1086.79} & 0.080 & \textbf{0.062} & 6/1503/0 & 8 & 0/0 & 74 & \num{1} \\
\midrule
\multirow[c]{2}{*}{L6}
& \multirow[c]{2}{*}{PC}
& \multirow[c]{2}{*}{317}
& MIQP & 0.127 & -0.054 & 142.38\% & 1800.00 & - & -0.061 & - & - & - & - & \num{23290} \\
& & & BPC & \textbf{0.122} & \textbf{0.122} & \textbf{0.00\%} & \textbf{150.24} & 0.219 & \textbf{0.122} & 6/1295/0 & 0 & 0/0 & 0 & \num{1} \\
\midrule
\multirow[c]{2}{*}{L7}
& \multirow[c]{2}{*}{PC}
& \multirow[c]{2}{*}{317}
& MIQP & 0.167 & 0.026 & 84.21\% & 1800.00 & - & 0.019 & - & - & - & - & \num{46923} \\
& & & BPC & \textbf{0.158} & \textbf{0.158} & \textbf{0.00\%} & \textbf{128.73} & - & \textbf{0.158} & 0/440/275 & 0 & 0/0 & 0 & \num{1} \\
\midrule
\multirow[c]{2}{*}{L8}
& \multirow[c]{2}{*}{PC}
& \multirow[c]{2}{*}{438}
& MIQP & 0.100 & -0.096 & 196.45\% & 1800.00 & - & -0.098 & - & - & - & - & \num{27763} \\
& & & BPC & \textbf{0.088} & \textbf{0.085} & \textbf{3.58\%} & 1800.00 & 0.104 & \textbf{0.085} & 6/1270/0 & 107 & 2/0 & 237 & \num{3} \\
\midrule
\multirow[c]{2}{*}{Average}
& \multirow[c]{2}{*}{-}
& \multirow[c]{2}{*}{387.3}
& MIQP & - & -0.063 & - & 1800.00 & - & -0.081 & - & - & - & - & \num{39182.3} \\
& & & BPC & \textbf{0.121} & \textbf{0.121} & \textbf{0.67\%} & \textbf{716.65} & 0.152 & \textbf{0.120} & 3.8/1010.8/114.6 & 41.4 & 1.0/0.0 & 48.0 & \num{2.3} \\
\bottomrule
\end{tabular*}
\end{table}

Taken together, \autoref{tab:small-scale-results}--\autoref{tab:large-scale-results} show a clear change in the relative performance of the two solution approaches as the problem scale increases. For the small instances, BPC and the compact MIQP return identical incumbent solutions and global lower bounds, providing empirical validation of the exactness of the proposed BPC. The compact MIQP is nevertheless considerably faster because the additional machinery of column generation is unnecessary at this scale. At the medium scale, the two methods still obtain very similar incumbent solutions on average, but their ability to certify solution quality differs substantially: BPC proves optimality for all instances, whereas the average global lower bound of the MIQP remains much weaker. This difference becomes more pronounced at the large scale. In addition to maintaining substantially tighter global lower bounds, BPC obtains better incumbent solutions for most instances and proves optimality for six of the eight cases, while the compact MIQP reaches the time limit on all instances.

The stronger certification performance mainly comes from the tighter LP relaxation of the path-based set-partitioning formulation used by BPC. The average root LB improves from $-0.100$ to $0.149$ for the medium-scale instances and from $-0.081$ to $0.120$ for the large-scale instances. Accordingly, most BPC instances are solved at the root node or within a small branch-and-bound tree. Whenever branching is required, all recorded decisions use the customer-pair rule; the customer-conditioned transformed-arc fallback is not invoked in these experiments.

M7 is the main exception to this root-node behavior. Its root relaxation retains an 11.89\% gap and requires 45 customer-pair branching decisions, resulting in 91 processed nodes and 371 SR inequalities. However, the overall results do not isolate how the individual algorithmic components contribute to BPC performance. This motivates the following controlled component analysis of parallel pricing, dynamic task splitting, and subset-row inequalities.

\subsection{Controlled Component Analysis}
To examine how the main algorithmic components contribute to BPC performance, we conduct a controlled experiment on four diagnostic large-scale instances, L1, L2, L5, and L8. These instances cover both PS and PC layouts and different computational regimes. Three variants are considered. \emph{BPC-SingleProc} retains all BPC components but uses a single pricing process, providing a baseline for examining the benefit of parallel pricing. \emph{BPC-NoSplit} uses 12 pricing processes and the same score-balanced initial route-space partition as Full BPC, but disables subsequent task repartitioning, thereby examining the contribution of dynamic task splitting. \emph{BPC-NoSR} retains parallel pricing and dynamic splitting but disables subset-row inequalities, thereby examining the contribution of SR cuts. All other pricing, dominance, pruning, branching, and primal procedures remain unchanged. The corresponding MIQP results from \autoref{tab:large-scale-results} are included as an external reference.

\begin{table}[tbp]
\centering
\scriptsize
\renewcommand{\arraystretch}{0.9}
\setlength{\tabcolsep}{3pt}
\caption{Controlled Component Analysis on Selected Large-Scale Instances}
\label{tab:controlled-experiment}
\begin{tabular*}{\textwidth}{
@{\extracolsep{\fill}}
cccrrrrrrrr
@{}}
\toprule
\multirow[c]{2}{*}{Inst.}
& \multirow[c]{2}{*}{Patt.}
& \multirow[c]{2}{*}{Method}
& \multicolumn{1}{c}{Inc.}
& \multicolumn{1}{c}{LB}
& \multicolumn{1}{c}{Gap}
& \multicolumn{1}{c}{Time}
& \multicolumn{1}{c}{Root LB}
& \multicolumn{1}{c}{\# SR}
& \multicolumn{1}{c}{\# Split}
& \multicolumn{1}{c}{\# Nodes} \\
& & & & & & \multicolumn{1}{c}{(s)} & & & & \\
\midrule

\multirow[c]{5}{*}{L1}
& \multirow[c]{5}{*}{PS}
& MIQP         & -     & -0.059          & -                & 1800.00          & -0.117          & -  & - & \num{46742} \\
& & BPC-SingleProc & 0.163 & 0.162           & 0.57\%           & 1800.00          & \textbf{0.162}   & 71 & 0 & \num{3} \\
& & BPC-NoSplit & 0.163 & \textbf{0.163}  & \textbf{0.00\%}  & 351.17           & \textbf{0.162}   & 72 & 0 & \num{3} \\
& & BPC-NoSR   & 0.163 & \textbf{0.163}  & \textbf{0.00\%}  & \textbf{151.83}  & \textbf{0.162}   & 0  & 0 & \num{7} \\
& & Full BPC   & 0.163 & \textbf{0.163}  & \textbf{0.00\%}  & 395.87           & \textbf{0.162}   & 74 & 0 & \num{3} \\
\midrule

\multirow[c]{5}{*}{L2}
& \multirow[c]{5}{*}{PS}
& MIQP         & 0.111 & -0.095          & 185.21\%         & 1800.00          & -0.105          & -   & -  & \num{25650} \\
& & BPC-SingleProc & -     & -0.231          & -                & 1800.00          & -               & 0   & 0  & \num{1} \\
& & BPC-NoSplit & 0.112 & 0.106           & 5.08\%           & 1800.00          & \textbf{0.106}  & 98  & 0  & \num{3} \\
& & BPC-NoSR   & 0.110 & \textbf{0.110}  & \textbf{0.00\%}  & \textbf{1216.67} & 0.105           & 0   & 18 & \num{47} \\
& & Full BPC   & 0.111 & 0.109           & 1.81\%           & 1800.00          & \textbf{0.106}  & 117 & 73 & \num{7} \\
\midrule

\multirow[c]{5}{*}{L5}
& \multirow[c]{5}{*}{PC}
& MIQP         & 0.062 & -0.100          & 260.46\%         & 1800.00          & -0.108          & - & -  & \num{54534} \\
& & BPC-SingleProc & 0.075 & -0.234          & 412.33\%         & 1800.00          & -               & 0 & 0  & \num{1} \\
& & BPC-NoSplit & 0.086 & -0.234          & 373.29\%         & 1800.00          & -               & 8 & 0  & \num{1} \\
& & BPC-NoSR   & 0.062 & \textbf{0.062}  & \textbf{0.00\%}  & \textbf{816.23}  & \textbf{0.062}  & 0 & 65 & \num{3} \\
& & Full BPC   & 0.062 & \textbf{0.062}  & \textbf{0.00\%}  & 1086.79          & \textbf{0.062}  & 8 & 74 & \num{1} \\
\midrule

\multirow[c]{5}{*}{L8}
& \multirow[c]{5}{*}{PC}
& MIQP         & 0.100 & -0.096          & 196.45\%         & 1800.00 & -0.098          & -   & -   & \num{27763} \\
& & BPC-SingleProc & -     & -0.233          & -                & 1800.00 & -               & 0   & 0   & \num{1} \\
& & BPC-NoSplit & 0.177 & -0.233          & 231.48\%         & 1800.00 & -               & 59  & 0   & \num{1} \\
& & BPC-NoSR   & 0.092 & \textbf{0.085}  & 6.88\%           & 1800.00 & 0.082           & 0   & 100 & \num{19} \\
& & Full BPC   & 0.088 & \textbf{0.085}  & \textbf{3.58\%}  & 1800.00 & \textbf{0.085}  & 107 & 237 & \num{3} \\

\bottomrule
\end{tabular*}
\end{table}

The controlled results first examine the contribution of parallel pricing. BPC-SingleProc reaches the 1,800-s limit on all four instances and makes particularly limited progress on L2 and L8, where it does not obtain an incumbent. Moreover, root pricing does not close on L2, L5, and L8, so the reported lower bounds of $-0.231$, $-0.234$, and $-0.233$, respectively, are only constant coarse lower bounds determined by the objective-normalization constants. In comparison, BPC-NoSplit obtains finite incumbents on all four instances and completes root pricing on two instances, showing that the use of 12 pricing processes improves overall search progress. Parallel pricing without dynamic splitting (BPC-NoSplit), however, remains insufficient for the more pricing-intensive cases. It closes the root on L2 and terminates with a 5.08\% gap, but on L5 and L8 root pricing remains incomplete and the reported lower bounds are again the constant coarse bounds; hence, the corresponding gaps of 373.29\% and 231.48\% mainly reflect the absence of a certified root bound. With dynamic splitting, Full BPC solves L5 to optimality and reduces the gap on L8 from 231.48\% to 3.58\%, while also improving the L2 gap from 5.08\% to 1.81\%. In contrast, no splitting is triggered on L1, where BPC-NoSplit and Full BPC both solve the instance to optimality. These results show that parallel pricing provides the basic computational gain, while dynamic splitting is particularly important when difficult pricing subspaces prevent root pricing from closing.

The comparison between BPC-NoSR and Full BPC shows that SR inequalities reduce the branch-and-bound search. Full BPC processes 3.5 nodes on average, compared with 19.0 for BPC-NoSR, although this reduction comes at the cost of additional cut-separation and pricing effort. Consequently, BPC-NoSR is faster on L1 and L5 and even proves optimality on L2 within 1,216.67 s, whereas Full BPC reaches the time limit on L2 with a 1.81\% gap. On the more difficult L8 instance, however, Full BPC obtains a stronger root LB of 0.085 compared with 0.082 for BPC-NoSR and reduces the final gap from 6.88\% to 3.58\% with fewer explored nodes. These results indicate that SR inequalities are effective in reducing fractionality and controlling tree growth.

\subsection{Sensitivity Analysis and Managerial Insights}
\label{sec:sensitivity}
The sensitivity analysis examines two complementary aspects of the operating environment: the economic attractiveness of drone service and the managerial priorities embedded in the objective function. We first examine the effect of the per-sortie drone cost and then investigate how different weights on delivery delay, truck return time, and operating cost influence the resulting operating decisions. The experiments use four sensitivity-specific medium-scale instances, comprising two PS and two PC instances. In each experiment, only the setting under investigation is changed, while all other instance settings are kept at their baseline values. For each tested setting, the PS and PC results report the arithmetic averages over their respective two instances, whereas ``Overall'' reports the average over all four selected instances.
Across these experiments, four metrics are used to characterize the resulting operating plans. The drone service ratio is the percentage of customers served by drones. Root mean square (RMS) delay is defined as $\sqrt{\frac{1}{|\CustNodes|}\sum_{c\in\CustNodes}(\RealizedServTime{c}-\arriveLB{c})^2}$ and measures the overall severity of customer delays. Delay variation is defined as $\sqrt{\frac{1}{|\CustNodes|}\sum_{c\in\CustNodes}\bigl[(\RealizedServTime{c}-\arriveLB{c})-\bar d\bigr]^2}$, where $\bar d:=\frac{1}{|\CustNodes|}\sum_{c\in\CustNodes}(\RealizedServTime{c}-\arriveLB{c})$, and captures the dispersion of delays across customers. Operating cost includes the fixed cost of deployed trucks and the per-sortie cost of drone operations.

\begin{figure}[tbp]
  \centering
  \includegraphics[width=1\textwidth]{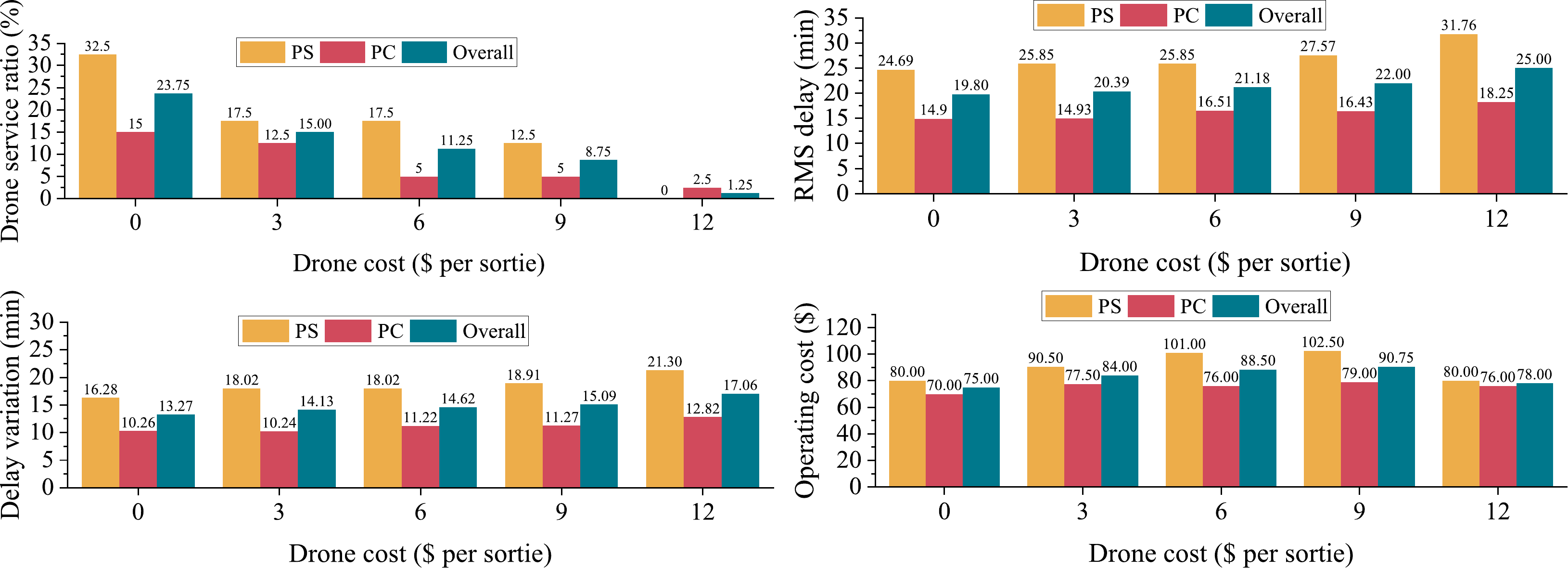}
  \caption{Effect of the drone cost on drone utilization, delivery-delay performance, and operating cost.}
  \label{fig:drone-cost-sensitivity}
\end{figure}

\autoref{fig:drone-cost-sensitivity} shows that increasing the per-sortie drone cost progressively shifts customer service from drones to trucks, with the overall drone service ratio decreasing by 94.7\% as the cost increases from \$0 to \$12. This substitution is accompanied by a 26.3\% increase in RMS delay and a 28.6\% increase in delay variation, indicating that reduced drone deployment not only increases customer delays overall but also makes service less uniform across customers. The deterioration is more pronounced for the PS instances, suggesting that drone service is particularly valuable when customers are geographically dispersed. The associated cost response is non-monotone: overall operating cost increases by 21.0\% as the sortie cost rises from \$0 to \$9, but then decreases by 14.0\% at \$12, when the drone service ratio falls to only 1.25\%. This pattern reveals a threshold beyond which further increases in drone cost reduce expenditure only by accepting poorer customer service performance.

\begin{figure}[tbp]
  \centering
  \includegraphics[width=1\textwidth]{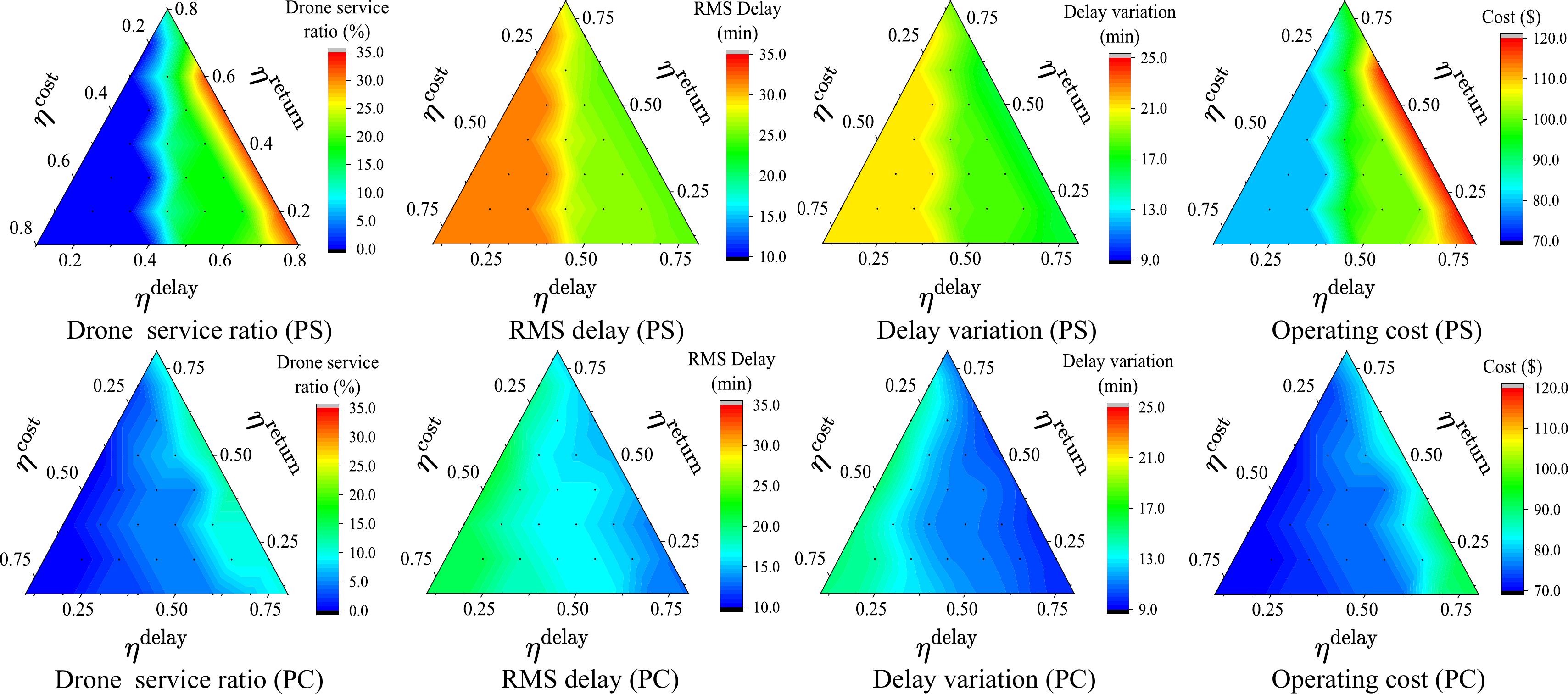}
  \caption{Operating trade-offs over the objective-weight simplex. Each ternary map associates a weight vector $(\WeightDelay,\WeightReturn,\WeightCost)$ with the indicated performance metric.}
  \label{fig:weight-sensitivity}
\end{figure}

\autoref{fig:weight-sensitivity} reveals that customer geography determines how managerial priorities are translated into operating decisions. For the PS instances, increasing the emphasis on delivery delay sharply increases drone utilization and operating cost while reducing both RMS delay and delay variation; greater emphasis on operating cost produces the opposite response. Thus, the service--cost trade-off in sparse networks is realized largely through mode substitution, with drones used to avoid costly truck service to geographically dispersed customers. The PC instances respond less strongly to changes in the objective weights. Increasing the emphasis on delivery delay still reduces RMS delay and delay variation, but it produces much smaller changes in drone utilization and operating cost than in PS. This is because clustered customers can often be served efficiently by trucks within relatively compact routes, so improving customer service does not require a large shift toward drone delivery. Notably, delay variation decreases together with RMS delay in both layouts, indicating that greater emphasis on the quadratic delay objective improves not only overall timeliness but also the uniformity of customer service.

The two time-related objectives induce different operating decisions. A larger $\WeightDelay$ prioritizes earlier customer service and may justify additional drone sorties to avoid late truck visits, whereas a larger $\WeightReturn$ prioritizes earlier route completion. They are not equivalent because a drone sortie can substantially advance a customer's service time without comparably reducing truck return time when detour savings are offset by synchronization-pad waiting. These results support treating customer delay separately from truck return time; the quadratic penalty also discourages large individual delays and service disparities.

\section{Conclusions}\label{sec:conc}

This paper introduces the Truck-as-Hub Vehicle Routing Problem with Drones in Rural Areas (\AbbrProb), in which trucks act as temporary mobile hubs at passive, predesignated synchronization pads. At each visited pad, a truck may dispatch multiple onboard drones simultaneously and must wait until all return before continuing its route. We formulate an arc-based mixed-integer quadratic program that jointly considers customer delivery delay, truck return time, and operating cost, with the quadratic delay term discouraging disproportionately poor service to remote customers.

To solve the problem exactly, we construct a canonical transformed network that provides a cost-preserving elementary-path representation of feasible truck--drone routes and an equivalent route-space set-partitioning formulation. Based on this representation, we develop an exact branch-price-and-cut (BPC) algorithm with tailored forward label-setting pricing, dominance and safe reduced-cost pruning, valid inequalities, and parallel pricing with dynamic route-space repartitioning.

Computational experiments show that BPC’s advantage grows with problem size. BPC proves optimality for all medium-scale and six of eight large-scale instances, with gaps of 1.81\% and 3.58\% on the remaining two, whereas the compact MIQP reaches the 1,800-second limit on all large instances. Sensitivity results show that higher drone costs shift service toward trucks and increase delays, particularly for spatially dispersed customers. They also show that customer delivery delay and truck return time induce distinct operating decisions, supporting explicit consideration of customer-level delay.

Several extensions merit further study. Jointly optimizing synchronization-pad locations and truck--drone routes would extend the framework toward integrated network design. Incorporating uncertain travel times and heterogeneous drone fleets would further improve the applicability of the \AbbrProb{} to realistic rural delivery operations.

\bibliographystyle{plainnat}
\bibliography{papers}

\clearpage
\appendix
\compactappendixparagraphs
\numberwithin{equation}{section}
\section{Forward Label-Setting Pricing}
\label{app:label-setting-details}

The forward label-setting algorithm constructs elementary source--sink paths in $\TransGraph$ while enforcing route resources and inherited branching decisions. A forward label ending at $i\in\Duplicate{\Nodes}$ is defined as $\ForwardLabel{i}=\ForwardLabelInfo{i}{}$.
It represents a partial path from $\DepotSource$ to $i$ and carries the following information. The component $\PartialPath{i}$ is the partial path in $\TransGraph$, given by the ordered sequence of transformed nodes from $\DepotSource$ to $i$, while $\VisitedTruckNodes{i}$ is the set of physical customer and synchronization-pad nodes already visited by the truck, together with the source depot. The component $\RealizedServTime{i}$ is the arrival time at node $i$. The resource $\TruckWeight{i}$ is the cumulative demand represented by the partial path, and $\LabelHub{i}$ is the most recently visited synchronization pad. The components $\TruckArrHubTime{i}$ and $\TruckWaitHubTime{i}$ are the truck arrival time at $\LabelHub{i}$ and the current waiting time at $\LabelHub{i}$ induced by the drone sorties in the active block, respectively, whereas $\HubFlyDroneNum{i}$ is the number of customers served by drones from $\LabelHub{i}$. The vector $\NumCustPiCoverByLabelSet{i}=\{\NumCustPiCoverByLabel{i}{\pi}:\pi\in\SRset\}$ contains the active-SR counters, where $\NumCustPiCoverByLabel{i}{\pi}$ is the number of customers from $\pi$ represented by the partial path. Finally, $\BranchState{i}$ records the branch-relevant status of the current partial path, and $\LabelReducedCost{i}$ is the reduced cost accumulated along the partial path.

For $j\in\CustNodes\cup\CustNodesDuplicate$, $\ServedCust{j}$ denotes the original customer represented by transformed node $j$: $\ServedCust{j}=j$ for $j\in\CustNodes$ and $\ServedCust{j}=\Origin{j}$ for $j\in\CustNodesDuplicate$. Thus, $\ServedCust{j}$ allows the same demand, service-envelope, elementarity, and dual calculations to be used for truck- and drone-served representations of a customer.
For any partial path $P$ in $\TransGraph$, define $\RouteServedSet{P}:=\{\ServedCust{j}\mid j\in P\cap(\CustNodes\cup\CustNodesDuplicate)\}$, which is the set of original customers represented by $P$.
For initialization, at $\DepotSource$, we set $\PartialPath{\DepotSource}=\{\DepotSource\}$ and $\VisitedTruckNodes{\DepotSource}=\{\DepotSource\}$. All time, payload, drone-count, and SR-counter resources are initialized to zero; each branch state is initialized to indicate that no branch-relevant event has yet occurred; $\LabelHub{\DepotSource}=-1$, and $\LabelReducedCost{\DepotSource}=\ObjCoeffCost\TruckCost-\kappa$.

\subsection{Feasible extensions.}\label{app:feasible-extensions}
A label $\ForwardLabel{i}$ may be extended to $j$ along $(i,j)\in\Duplicate{\Arcs}$ only if the resulting partial path satisfies the following conditions:

\paragraph{Depot and elementarity checks.} No extension is made when $i=\DepotSink$ or $j=\DepotSource$. For $j\in\CustNodes\cup\HubNodes$, physical-node elementarity requires $j\notin\VisitedTruckNodes{i}$. For $j\in\CustNodes\cup\CustNodesDuplicate$, customer elementarity requires $\ServedCust{j}\notin\RouteServedSet{\PartialPath{i}}$. A sink label is retained only if $\RouteServedSet{\PartialPath{\DepotSink}}\neq\emptyset$.

\paragraph{Active-pad and canonical-order checks.}
For arc $(i,j)\in\CustDupCustDupArcs$, customer $\ServedCust{j}$ must be accessible from the active pad, i.e., $\ServedCust{j}\in\DroneAccessNodes{\LabelHub{i}}$, and the extension must respect the canonical order, $\ServedCust{i}\prec_{\LabelHub{i}}\ServedCust{j}$. For arc $(i,j)\in\CustDupCustArcs$, we require $j\in\TruckAccessNodes{\LabelHub{i}}$.

\paragraph{Capacity checks.}
For $j\in\CustNodes\cup\CustNodesDuplicate$, payload feasibility requires $\TruckWeight{i}+\DemandWeight{\ServedCust{j}}\le\TruckPayload$. If $j\in\CustNodesDuplicate$, the extension additionally requires $\HubFlyDroneNum{i}+1\le\DroneFleetSize$. Drone payload and endurance feasibility are already enforced by the preprocessing of $\DroneArcs$.

\paragraph{Service-time checks.}
The arrival time at $j$ is updated according to the type of extension. For a truck arc $(i,j)\in\TruckArcs$, we set $\RealizedServTime{j}=\RealizedServTime{i}+\TruckTravelTime{i}{j}$. If $j\in\CustNodesDuplicate$, then $\RealizedServTime{j}=\TruckArrHubTime{i}+\DroneTravelTime{\LabelHub{i}}{\ServedCust{j}}$. If arc $(i,j)\in\CustDupCustArcs$, we set $\RealizedServTime{j}=\TruckArrHubTime{i}+\TruckWaitHubTime{i}+\TruckTravelTime{\LabelHub{i}}{j}$. Finally, whenever $j\in\CustNodes\cup\CustNodesDuplicate$, the extension is feasible only if $\RealizedServTime{j}\le\EffServUB{\ServedCust{j}}$.

\paragraph{Branching checks.}
The decisions in Section~\ref{sec:branching} are enforced during label extension. Under $\BranchSeparate{pq}$, an extension is rejected if it would cover both $p$ and $q$; under $\BranchTogether{pq}$, a label reaching $\DepotSink$ is retained only if $p\in\RouteServedSet{\PartialPath{\DepotSink}}\Longleftrightarrow q\in\RouteServedSet{\PartialPath{\DepotSink}}$. Under $\BranchArcDown{p}{u}{v}$, a label is rejected if its path both covers $p$ and traverses $(u,v)$; under $\BranchArcUp{p}{u}{v}$, a label reaching $\DepotSink$ covering $p$ is retained only if it traverses $(u,v)$.
The vector $\BranchState{i}$ stores unresolved branch history: for $\BranchTogether{pq}$ it records whether neither, only $p$, only $q$, or both are covered; for transformed-arc branches it records whether $p$ is covered and $(u,v)$ is traversed. $\BranchSeparate{pq}$ is checked locally and requires no stored state.

\paragraph{Resource extension.}
For a feasible extension, $\PartialPath{j}=\PartialPath{i}\oplus\{j\}$. If $j\in\CustNodes\cup\HubNodes$, set $\VisitedTruckNodes{j}:=\VisitedTruckNodes{i}\cup\{j\}$; otherwise, this resource is unchanged. If $j\in\CustNodes\cup\CustNodesDuplicate$, set $\TruckWeight{j}:=\TruckWeight{i}+\DemandWeight{\ServedCust{j}}$; otherwise, the payload is unchanged. If $j\in\HubNodes$, set $\LabelHub{j}:=j$, $\TruckArrHubTime{j}:=\RealizedServTime{j}$, and reset $\TruckWaitHubTime{j}=\HubFlyDroneNum{j}=0$. If $j\in\CustNodesDuplicate$, retain the active pad and its truck-arrival time, and set $\TruckWaitHubTime{j}:=\max\{\TruckWaitHubTime{i},\DroneTripTime{\LabelHub{i}}{\ServedCust{j}}\}$ and $\HubFlyDroneNum{j}:=\HubFlyDroneNum{i}+1$. At an original non-pad node, the active pad and its arrival time are retained, while the waiting time and drone count are reset to zero.
For each $\pi\in\SRset$, the SR counter is
$
\NumCustPiCoverByLabel{j}{\pi}
=
\NumCustPiCoverByLabel{i}{\pi}
+
\mathbf 1
\left\{
j\in\CustNodes\cup\CustNodesDuplicate,\,
\ServedCust{j}\in\pi
\right\}.
$
The set
$
\SRset(\NumCustPiCoverByLabelSet{i},\NumCustPiCoverByLabelSet{j})
:=
\left\{
\pi\in\SRset:
\NumCustPiCoverByLabel{i}{\pi}<2,\,
\NumCustPiCoverByLabel{j}{\pi}=2
\right\}
$
contains the active SR rows whose coefficient is first incurred by the extension. Then, the reduced-cost is updated as
\begin{equation}\label{eq:label-rc-extension}
\LabelReducedCost{j}
=
\LabelReducedCost{i}
-
\sum_{\pi\in
\SRset(\NumCustPiCoverByLabelSet{i},\NumCustPiCoverByLabelSet{j})}
\nu_\pi
+
\begin{cases}
\ObjCoeffCost\DroneCost
+\ObjCoeffDelay(\RealizedServTime{j}-\arriveLB{\ServedCust{j}})^2
-\mu_{\ServedCust{j}},
& j\in\CustNodesDuplicate,\\
\ObjCoeffDelay(\RealizedServTime{j}-\arriveLB{j})^2-\mu_j,
& j\in\CustNodes,\\
\ObjCoeffReturn\RealizedServTime{j},
& j=\DepotSink,\\
0,
& j\in\HubNodes.
\end{cases}
\end{equation}

\subsubsection{Dominance.}
For a label $\ForwardLabel{}$, let $\BranchExtSet{\ForwardLabel{}}$ be its branch-feasible remaining extensions and $\CompletionSet{\ForwardLabel{}}\subseteq\BranchExtSet{\ForwardLabel{}}$ those producing a nonempty, canonical, elementary, and resource-feasible source--sink path. Label $\ForwardLabel{i}^{1}$ dominates $\ForwardLabel{i}^{2}$ if $\CompletionSet{\ForwardLabel{i}^{2}}\subseteq\CompletionSet{\ForwardLabel{i}^{1}}$ and $\CompletionCost{\ForwardLabel{i}^{1}}{\Completion}\le\CompletionCost{\ForwardLabel{i}^{2}}{\Completion}$ for every $\Completion\in\CompletionSet{\ForwardLabel{i}^{2}}$. Branch compatibility requires $\BranchExtSet{\ForwardLabel{i}^{2}}\subseteq\BranchExtSet{\ForwardLabel{i}^{1}}$. To apply this definition efficiently during label setting, we next derive sufficient dominance rules in terms of the labels' stored resources, separately for labels at the same transformed node and for labels at different transformed nodes representing the same physical truck location.

\paragraph{Same-node dominance.}
Two labels are compared by the same-node rule only when they end at the same transformed node.
For any two labels $\ForwardLabel{u}^{1}$ and $\ForwardLabel{v}^{2}$, define $\SRCorrectionSet{\ForwardLabel{u}^{1}}{\ForwardLabel{v}^{2}}:=\{\pi\in\SRset:\nu_\pi<0,\,\NumCustPiCoverByLabel{u}{\pi}^{1}=1,\,\NumCustPiCoverByLabel{v}{\pi}^{2}=2\}$. This set contains the active SR rows for which the first label currently covers one customer of $\pi$ and the second covers two.

\begin{proposition}[Same-node forward dominance]
\label{improved_forward_dominance}
Label $\ForwardLabel{i}^{1}$ dominates $\ForwardLabel{i}^{2}$ if
\begin{equation}
\begin{aligned}
&\LabelHub{i}^{1}=\LabelHub{i}^{2},\qquad
\RouteServedSet{\PartialPath{i}^{1}}
\subseteq\RouteServedSet{\PartialPath{i}^{2}},\qquad
\VisitedTruckNodes{i}^1
\subseteq\VisitedTruckNodes{i}^{2},\qquad
\RouteServedSet{\PartialPath{i}^{1}}=\emptyset
\Longrightarrow
\RouteServedSet{\PartialPath{i}^{2}}=\emptyset.\\
&\TruckWeight{i}^{1}\le\TruckWeight{i}^{2},\qquad
\HubFlyDroneNum{i}^{1}\le\HubFlyDroneNum{i}^{2},\qquad
\BranchState{i}^{1}=\BranchState{i}^{2},\qquad
\RealizedServTime{i}^{1}\le\RealizedServTime{i}^{2},\\
&\TruckArrHubTime{i}^{1}\le\TruckArrHubTime{i}^{2},\qquad
\TruckWaitHubTime{i}^{1}\le\TruckWaitHubTime{i}^{2},\qquad
\LabelReducedCost{i}^{1}
-\sum_{\pi\in\SRCorrectionSet{\ForwardLabel{i}^{1}}{\ForwardLabel{i}^{2}}}\nu_\pi
\le\LabelReducedCost{i}^{2}.
\end{aligned}
\end{equation}
\end{proposition}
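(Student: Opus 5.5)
We verify the two clauses of the dominance definition directly. Fix any completion $\Completion\in\CompletionSet{\ForwardLabel{i}^{2}}$, i.e., a sequence of transformed arcs from $i$ to $\DepotSink$ that, appended to $\PartialPath{i}^{2}$, gives a nonempty, canonical, elementary, resource-feasible and branch-feasible path. We show that the same arc sequence appended to $\PartialPath{i}^{1}$ is also feasible, and that its total reduced cost is no larger. The argument is an induction along the arcs of $\Completion$. The invariant is that the resources of the extended label~1 stay componentwise no worse than those of the extended label~2 in each monotone resource: arrival time, $\TruckArrHubTime{}$, $\TruckWaitHubTime{}$, payload, drone count, visited sets and covered sets. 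The active pad and branch state remain equal.

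\textbf{Feasibility.} Elementarity transfers because $\RouteServedSet{\PartialPath{i}^{1}}\subseteq\RouteServedSet{\PartialPath{i}^{2}}$ and $\VisitedTruckNodes{i}^{1}\subseteq\VisitedTruckNodes{i}^{2}$. Any node or customer forbidden for label~1 is therefore also forbidden for label~2, so it cannot appear in $\Completion$. The active-pad and canonical-order checks depend only on $\LabelHub{}$ and the current transformed node, which coincide. The payload check and the drone-count check $\HubFlyDroneNum{}+1\le\DroneFleetSize$ pass because both recursions are monotone (additive or reset).

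The time recursions are all monotone nondecreasing in $(\RealizedServTime{},\TruckArrHubTime{},\TruckWaitHubTime{})$. A truck arc adds a constant, a duplicated-node entry uses $\TruckArrHubTime{}$ plus a constant, an exit arc uses $\TruckArrHubTime{}+\TruckWaitHubTime{}$ plus a constant, and the wait update is a $\max$ with a constant. Hence every deadline $\EffServUB{}$ met by label~2 is met by label~1. Equal branch states, together with deterministic branch-state transitions driven by the same arcs and covered customers, give identical branch decisions along $\Completion$. The equality $\BranchState{i}^{1}=\BranchState{i}^{2}$ is exactly what branch compatibility needs; in particular, together-branch acceptance at $\DepotSink$ depends only on the branch state. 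Nonemptiness transfers by the implication $\RouteServedSet{\PartialPath{i}^{1}}=\emptyset\Rightarrow\RouteServedSet{\PartialPath{i}^{2}}=\emptyset$. If label~2 is nonempty, then label~1 is nonempty. If label~2 is empty, the completion must serve someone, and so it does for label~1 as well. This gives $\CompletionSet{\ForwardLabel{i}^{2}}\subseteq\CompletionSet{\ForwardLabel{i}^{1}}$.

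\textbf{Cost.} By \eqref{eq:label-rc-extension}, the reduced-cost increment along $\Completion$ has three parts. First, the constant terms $\ObjCoeffCost\DroneCost-\mu$ are identical for both labels. Second, the time-dependent terms $\ObjCoeffDelay(\RealizedServTime{j}-\arriveLB{})^2$ and $\ObjCoeffReturn\RealizedServTime{\DepotSink}$ are nondecreasing in arrival time, since $\RealizedServTime{j}\ge\arriveLB{}$ by feasibility. They are therefore no larger for label~1. Third, the SR terms $-\nu_\pi$ are charged at the first time a counter reaches~2.

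The main obstacle is the SR accounting, which we handle row by row, noting that $-\nu_\pi\ge0$.
\begin{itemize}
\item If label~2 already has $\NumCustPiCoverByLabel{}{\pi}=2$, it pays nothing more for $\pi$. Label~1 may pay $-\nu_\pi$ later only if its counter is~1 now (counter~0 cannot reach~2 without label~2 also... we argue directly below). That case is exactly $\SRCorrectionSet{\ForwardLabel{i}^{1}}{\ForwardLabel{i}^{2}}$, and the extra charge is prepaid by the correction term.
\item If label~1 has counter~0 while label~2 has~2, label~1 cannot reach~2 along $\Completion$. Since $|\pi|=3$ and covered sets are nested, label~2 already covers two of the three customers of $\pi$. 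The completion can therefore add at most one customer of $\pi$ for label~2, hence also for label~1.
\item If label~2's counter is below~2, then nested covered sets give $\NumCustPiCoverByLabel{}{\pi}^{1}\le\NumCustPiCoverByLabel{}{\pi}^{2}$. The completion adds the same customers of $\pi$ to both labels, because those customers are uncovered by label~2 and hence by label~1. So label~2 reaches~2 no later than label~1, and whenever label~1 pays $-\nu_\pi$, label~2 pays it too.
\end{itemize}

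Summing these cases, the completion cost for label~1 exceeds its prefix cost by at most label~2's extra cost plus $-\sum_{\pi\in\SRCorrectionSet{\ForwardLabel{i}^{1}}{\ForwardLabel{i}^{2}}}\nu_\pi$. The last inequality of the proposition then yields $\CompletionCost{\ForwardLabel{i}^{1}}{\Completion}\le\CompletionCost{\ForwardLabel{i}^{2}}{\Completion}$.
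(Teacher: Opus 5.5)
Your proposal is correct and follows essentially the same argument as the paper. You fix a completion of $\ForwardLabel{i}^{2}$ and transfer feasibility through the nested served and visited sets, the equal active pad and branch state, and the monotone resource recursions; you then show that the only extra SR charge label~1 can incur comes from rows in $\SRCorrectionSet{\ForwardLabel{i}^{1}}{\ForwardLabel{i}^{2}}$, and your counter case analysis is the paper's floor-difference check written out. A few small fixes are needed:
\begin{itemize}
\item $\BranchSeparate{pq}$ is not recorded in $\BranchState{i}$, so equal branch states do not cover it. Argue, as the paper does, that the completed label~1 covers a subset of the customers covered by the completed label~2.
\item Your SR cases omit $\NumCustPiCoverByLabel{i}{\pi}^{2}=3$. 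This case is trivial, since the completion then adds no customer of $\pi$.
\item Remove the dangling sentence fragment in the first SR bullet.
\end{itemize}
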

\begin{proof}
See Appendix~\ref{app:proof:improved-forward-dominance}. 
\end{proof}

If the two labels are equivalent under all dominance conditions, only one is retained, since either can replace the other.

\paragraph{Physical-location dominance.}
The dominance rule can also compare labels that end at different transformed nodes but correspond to the same physical truck location. For a label $\ForwardLabel{i}$, define $\PhysLoc{\ForwardLabel{i}}:=i$ for $i\in\Nodes$ and $\PhysLoc{\ForwardLabel{i}}:=\LabelHub{i}$ for $i\in\CustNodesDuplicate$. Thus, $\PhysLoc{\ForwardLabel{i}}$ is the latest physical truck location along the partial path.

For each $h\in\HubNodes$, arrange the customers in $\DroneAccessNodes{h}$ according to the canonical order $\prec_h$, and let $\operatorname{rank}_h(n)$ denote the position of customer $n$ in this ordered list. Define $\BlockPos{\ForwardLabel{i}}:=0$ for $i\in\HubNodes$ and $\BlockPos{\ForwardLabel{i}}:=\operatorname{rank}_{\LabelHub{i}}(\ServedCust{i})$ for $i\in\CustNodesDuplicate$. Thus, $\BlockPos{\ForwardLabel{i}}$ records the progress of the label through the canonical duplicated-customer sequence associated with its active pad.

Labels $\ForwardLabel{u}^{1}$ and $\ForwardLabel{v}^{2}$ are \emph{location-comparable} if either $u=v\in\Nodes\setminus\bigl(\HubNodes\cup\{\DepotSink\}\bigr)$, or $\PhysLoc{\ForwardLabel{u}^{1}}=\PhysLoc{\ForwardLabel{v}^{2}}\in\HubNodes$ and $\BlockPos{\ForwardLabel{u}^{1}}\le\BlockPos{\ForwardLabel{v}^{2}}$. In the latter case, $\ForwardLabel{u}^{1}$ is no farther along the canonical sequence than $\ForwardLabel{v}^{2}$. Hence, any duplicated customer satisfying the canonical-order condition from $\ForwardLabel{v}^{2}$ also satisfies it from $\ForwardLabel{u}^{1}$.

For location-comparable labels, define $\RetCredit{\ForwardLabel{u}^{1}}{\ForwardLabel{v}^{2}}:=\RealizedServTime{v}^{2}-\RealizedServTime{u}^{1}$ for $u=v\in\Nodes\setminus\bigl(\HubNodes\cup\{\DepotSink\}\bigr)$, and $\RetCredit{\ForwardLabel{u}^{1}}{\ForwardLabel{v}^{2}}:=\TruckArrHubTime{v}^{2}-\TruckArrHubTime{u}^{1}$ for $\PhysLoc{\ForwardLabel{u}^{1}}=\PhysLoc{\ForwardLabel{v}^{2}}\in\HubNodes$. This quantity is the part of the timing advantage of $\ForwardLabel{u}^{1}$ that is guaranteed to remain in the final truck return time. At a common synchronization pad, only the truck-arrival-time difference is guaranteed, because later drone extensions may make the two waiting times equal. The sink is excluded because its return-time contribution has already been included in the label reduced cost; sink labels are handled by Proposition~\ref{improved_forward_dominance}.

\begin{proposition}[Physical-location forward dominance]
\label{prop:physical-location-dominance}
Consider two location-comparable labels $\ForwardLabel{u}^{1}$ and $\ForwardLabel{v}^{2}$. When $u\neq v$, assume that, for every node $x$ that can be the first node of a feasible completion from $\ForwardLabel{v}^{2}$, the arc $(u,x)$ belongs to $\Duplicate{\Arcs}$ and the extensions along $(u,x)$ and $(v,x)$ yield the same branch-state vector. Then $\ForwardLabel{u}^{1}$ dominates $\ForwardLabel{v}^{2}$ if
\begin{align}
&\RouteServedSet{\PartialPath{u}^{1}}\subseteq\RouteServedSet{\PartialPath{v}^{2}},\qquad
\VisitedTruckNodes{u}^{1}\subseteq\VisitedTruckNodes{v}^{2},\qquad
\RouteServedSet{\PartialPath{u}^{1}}=\emptyset
\Longrightarrow
\RouteServedSet{\PartialPath{v}^{2}}=\emptyset,
\label{eq:pld-cust}\\
&\TruckWeight{u}^{1}\le\TruckWeight{v}^{2},\qquad
\HubFlyDroneNum{u}^{1}\le\HubFlyDroneNum{v}^{2},\qquad
\BranchState{u}^{1}=\BranchState{v}^{2},
\label{eq:pld-resource}\\
&u=v\in\Nodes\setminus\bigl(\HubNodes\cup\{\DepotSink\}\bigr)
\Longrightarrow
\RealizedServTime{u}^{1}\le\RealizedServTime{v}^{2},\\
&\PhysLoc{\ForwardLabel{u}^{1}}=\PhysLoc{\ForwardLabel{v}^{2}}\in\HubNodes
\Longrightarrow
\left(
\TruckArrHubTime{u}^{1}\le\TruckArrHubTime{v}^{2}
\;\wedge\;
\TruckWaitHubTime{u}^{1}\le\TruckWaitHubTime{v}^{2}
\right),
\label{eq:pld-time}\\
&\LabelReducedCost{u}^{1}
-\sum_{\pi\in\SRCorrectionSet{\ForwardLabel{u}^{1}}{\ForwardLabel{v}^{2}}}\nu_\pi
-\ObjCoeffReturn\RetCredit{\ForwardLabel{u}^{1}}{\ForwardLabel{v}^{2}}
\le
\LabelReducedCost{v}^{2}.
\label{eq:pld-cost}
\end{align}
\end{proposition}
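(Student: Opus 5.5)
We prove the proposition directly against the dominance definition, exactly as for Proposition~\ref{improved_forward_dominance}: fix an arbitrary $\Completion\in\CompletionSet{\ForwardLabel{v}^{2}}$ and show that (i) $\Completion$ is a branch-feasible, canonical, elementary, resource-feasible, nonempty completion of $\ForwardLabel{u}^{1}$, and (ii) $\CompletionCost{\ForwardLabel{u}^{1}}{\Completion}\le\CompletionCost{\ForwardLabel{v}^{2}}{\Completion}$. When $u=v\in\Nodes\setminus(\HubNodes\cup\{\DepotSink\})$ the statement reduces to same-node dominance with the extra credit term, so the main work is the pad case $\PhysLoc{\ForwardLabel{u}^{1}}=\PhysLoc{\ForwardLabel{v}^{2}}=h\in\HubNodes$ with $\BlockPos{\ForwardLabel{u}^{1}}\le\BlockPos{\ForwardLabel{v}^{2}}$. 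We split $\Completion$ into its initial segment of duplicated nodes still inside the active block at $h$ (possibly empty), the exit arc to some $x\in\TruckAccessNodes{h}$, and the remaining suffix.

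For feasibility, the assumption on first nodes gives $(u,x')\in\Duplicate{\Arcs}$ for the first node $x'$ of $\Completion$ with identical branch state. Inside the block, location-comparability and the rank inequality keep the canonical-order checks valid, and $\HubFlyDroneNum{u}^{1}\le\HubFlyDroneNum{v}^{2}$ preserves the drone-count check. Customer and physical-node elementarity follow from the two set inclusions in \eqref{eq:pld-cust}; payload follows from $\TruckWeight{u}^{1}\le\TruckWeight{v}^{2}$. Nonemptiness follows from the implication in \eqref{eq:pld-cust}. For timing we propagate by induction along $\Completion$ that label~1 is never later: block customers are served at $\TruckArrHubTime{}+\DroneTravelTime{h}{j}$, which is monotone in the arrival time; the waiting resource is a running maximum, so $\TruckWaitHubTime{u}^{1}\le\TruckWaitHubTime{v}^{2}$ persists; and after the exit arc the truck clocks satisfy $\RealizedServTime{}^{1}\le\RealizedServTime{}^{2}$, which then persists along truck arcs. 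Service-time limits $\EffServUB{}$ therefore remain satisfied, and the branch states coincide throughout, so the sink-acceptance tests agree.

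For cost, we decompose the completion reduced cost into delay terms, dual terms $-\mu$, drone costs, SR terms, and the return term $\ObjCoeffReturn\RealizedServTime{\DepotSink}$. The drone-cost and $\mu$ parts are identical for the two labels. Delay terms are monotone in service time, since delays are nonnegative because every $a\ge\arriveLB{}$; they are therefore no larger for label~1. SR terms are handled as in Proposition~\ref{improved_forward_dominance}: a row first incurred by label~1 but already incurred by label~2 must have counters $1$ versus $2$, which is exactly $\SRCorrectionSet{\ForwardLabel{u}^{1}}{\ForwardLabel{v}^{2}}$, and the correction $-\sum\nu_\pi$ covers it. This uses the counter inequality $\psi^1\le\psi^2$ implied by the served-set inclusion. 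The new ingredient is the return term. We show $\RealizedServTime{\DepotSink}^{2}-\RealizedServTime{\DepotSink}^{1}\ge\RetCredit{\ForwardLabel{u}^{1}}{\ForwardLabel{v}^{2}}$, which lets the credit $\ObjCoeffReturn\RetCredit{}{}$ be subtracted in \eqref{eq:pld-cost}. In the pad case, the exit time equals $\TruckArrHubTime{}+\max(\text{wait})+\TruckTravelTime{h}{x}$. The final waits $W^1,W^2$ are running maxima of common extra sorties starting from $\TruckWaitHubTime{u}^{1}\le\TruckWaitHubTime{v}^{2}$, so $W^1\le W^2$. The difference at exit is therefore at least $\TruckArrHubTime{v}^{2}-\TruckArrHubTime{u}^{1}$, and this gap is then carried unchanged along truck arcs to $\DepotSink$, with no later waits differing since subsequent pads start fresh. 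Summing the inequalities gives (ii).

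The main obstacle is the pad case with $u\ne v$. Its difficulty is making rigorous that the completion of label~2, which begins at a later canonical position, is a legal completion of label~1. This requires that the arc-existence assumption together with the active-pad-dependent arc attributes yield the same attribute values from $u$. It also requires checking that the return credit is not double-counted with the delay improvement. I would address the first point by noting that arc times for within-block and exit arcs depend only on the active pad $h$, which is common to both labels, and not on the tail node. I would address the second by observing that the credit concerns only the sink term, which is not yet included in either label's reduced cost, since $u,v\neq\DepotSink$.
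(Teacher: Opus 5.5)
Your proposal is correct and follows essentially the same route as the paper's proof. Like the paper, you fix a completion of $\ForwardLabel{v}^{2}$ and transfer its first extension to $\ForwardLabel{u}^{1}$ using the common active pad and the rank inequality. You then propagate elementarity, capacity, branch state and time monotonicity along the shared suffix, reuse the SR-correction argument of Proposition~\ref{improved_forward_dominance}, and obtain the return credit from $\max\{\TruckWaitHubTime{u}^{1},T\}\le\max\{\TruckWaitHubTime{v}^{2},T\}$.

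One small imprecision: the case $u=v\in\Nodes\setminus(\HubNodes\cup\{\DepotSink\})$ does not literally reduce to Proposition~\ref{improved_forward_dominance}, because equal active pads and ordered pad times are not assumed there. This is harmless, since only truck arcs leave an original customer node and the next visited pad resets those resources.
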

\begin{proof}
See Appendix~\ref{app:proof:prop-physical-location-dominance}. 
\end{proof}

\section{Farkas Pricing}
\label{app:farkas-details}
At some branch-and-bound nodes, the current RMP may be infeasible because its column set is insufficient. We therefore use Farkas pricing to generate a column that violates the current Farkas certificate. Let $(\tilde\mu,\tilde\kappa,\tilde\nu)$ be a Farkas dual ray associated, respectively, with the customer-covering, truck-fleet, and active SR constraints of the RMP. The ray satisfies
$
\sum_{n\in\CustNodes} \tilde\mu_n\RouteVisitNode{r}{n}
+\tilde\kappa
+\sum_{\SRsubset\in\ActiveSRset{m}} \tilde\nu_{\SRsubset}\sigma_{r,\SRsubset}
\le 0,
$
for all $r\in\RouteSubset{m}$ and
$
\sum_{n\in\CustNodes}\tilde\mu_n
+|\Trucks|\tilde\kappa
+\sum_{\SRsubset\in\ActiveSRset{m}}\tilde\nu_{\SRsubset}
>0
$.
The ray components satisfy
$
\tilde\mu_n\in\mathbb{R},~
\tilde\kappa\le0,~
\tilde\nu_{\SRsubset}\le0,~
n\in\CustNodes,~
\SRsubset\in\ActiveSRset{m}.
$
Farkas pricing searches a route $r\in\BranchFeasibleRoutes{m}$ that violates the ray, i.e.,
$
\sum_{n\in\CustNodes} \tilde\mu_n\RouteVisitNode{r}{n}
+\sum_{\SRsubset\in\ActiveSRset{m}} \tilde\nu_{\SRsubset}\sigma_{r,\SRsubset}
+\tilde\kappa
>0
$.

Farkas pricing uses the same feasible-extension checks and the same initialization and update rules for all label components other than the reduced cost as standard pricing. The reduced-cost is initialized by $\LabelReducedCost{\DepotSource}=-\tilde\kappa$, and the update in \eqref{eq:label-rc-extension} is replaced by
\begin{equation}\label{eq:farkas-label-rc}
\LabelReducedCost{j} = \LabelReducedCost{i} - \sum_{\SRsubset\in\SRset(\NumCustPiCoverByLabelSet{i},\NumCustPiCoverByLabelSet{j})} \tilde\nu_{\SRsubset} - \begin{cases}
\tilde\mu_{\ServedCust{j}}, & j\in\CustNodes\cup\CustNodesDuplicate,\\
0, & \textnormal{otherwise}.
\end{cases}
\end{equation}

For a complete label corresponding to route $r$, recursion \eqref{eq:farkas-label-rc} gives $\LabelReducedCost{\DepotSink}=-\tilde\kappa-\sum_{n\in\CustNodes}\tilde\mu_n\RouteVisitNode{r}{n}-\sum_{\SRsubset\in\ActiveSRset{m}}\tilde\nu_{\SRsubset}\sigma_{r,\SRsubset}$. Hence, $\LabelReducedCost{\DepotSink}<0$ when $r$ satisfies the Farkas-violation condition stated above, in which case $r$ is added to the RMP. Farkas pricing uses the dominance rule below and the completion bound in Proposition~\ref{prop:farkas-dual-reward-pruning}, developed in Appendix~\ref{app:accel-full}. The standard-pricing completion bound does not apply directly because the Farkas reduced cost contains no route-cost terms.

\begin{proposition}[Farkas-pricing dominance]
\label{prop:farkas-pricing-dominance}
Consider two location-comparable labels $\ForwardLabel{u}^{1}$ and $\ForwardLabel{v}^{2}$. When $u\neq v$, require that, for every node $x$ that can be the first node of a feasible completion from $\ForwardLabel{v}^{2}$, the arc $(u,x)$ belongs to $\Duplicate{\Arcs}$ and the extensions along $(u,x)$ and $(v,x)$ yield the same branch-state vector at $x$. Suppose that conditions \eqref{eq:pld-cust}--\eqref{eq:pld-time} hold with $\NumCustPiCoverByLabelSet{u}^{1}=\NumCustPiCoverByLabelSet{v}^{2}$ and $\LabelReducedCost{u}^{1}\le\LabelReducedCost{v}^{2}$. Then $\ForwardLabel{u}^{1}$ dominates $\ForwardLabel{v}^{2}$ in Farkas pricing.
\end{proposition}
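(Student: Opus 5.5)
We prove the statement by a completion-matching argument, mirroring the structure used for Proposition~\ref{prop:physical-location-dominance} but exploiting that the Farkas reduced cost \eqref{eq:farkas-label-rc} contains no time-dependent or route-cost terms. Fix any feasible completion $\Completion\in\CompletionSet{\ForwardLabel{v}^{2}}$, written as the remaining node sequence $x_1,x_2,\ldots,\DepotSink$ after $v$. We must show two things: first, that $\Completion$ (appended after $u$ instead of $v$) lies in $\CompletionSet{\ForwardLabel{u}^{1}}$, and second, that the resulting completed Farkas reduced cost from $\ForwardLabel{u}^{1}$ is no larger than that from $\ForwardLabel{v}^{2}$.

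\textbf{Feasibility transfer.} We argue by induction along $\Completion$ that each extension feasible for the label descending from $\ForwardLabel{v}^{2}$ is feasible for the corresponding label descending from $\ForwardLabel{u}^{1}$, and that the inequalities in \eqref{eq:pld-cust}--\eqref{eq:pld-time} are preserved. For the first arc, when $u\neq v$ the hypothesis guarantees $(u,x_1)\in\Duplicate{\Arcs}$ and identical branch states. In the pad case, location-comparability ($\BlockPos{\ForwardLabel{u}^{1}}\le\BlockPos{\ForwardLabel{v}^{2}}$) gives the canonical-order check. The set inclusions of served customers and visited truck nodes give elementarity. The inequalities $\TruckWeight{u}^{1}\le\TruckWeight{v}^{2}$ and $\HubFlyDroneNum{u}^{1}\le\HubFlyDroneNum{v}^{2}$ give payload and drone-count feasibility. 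The time inequalities in \eqref{eq:pld-time}, or $\RealizedServTime{u}^{1}\le\RealizedServTime{v}^{2}$ at a common non-pad node, propagate monotonically through the arrival updates of Appendix~\ref{app:feasible-extensions}; the key facts are that $\max$ and addition are monotone and that the reset at a new pad copies arrival times. Hence every service-time deadline $\EffServUB{\cdot}$ met by label 2 is met by label 1. Equal branch states together with the identical suffix give identical branch outcomes. For the terminal condition, the nonemptiness implication in \eqref{eq:pld-cust} ensures that if label 2's completed path is nonempty, so is label 1's. The case where label 1 has served nothing but label 2 has is harmless, since any completion of label 2 adding a customer does the same for label 1; the implication handles the case in which the completion adds no customer.

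\textbf{Cost comparison.} Along $\Completion$, the Farkas increments in \eqref{eq:farkas-label-rc} are $-\tilde\mu_{\ServedCust{x}}$ for each customer node, identical for both labels because the suffix is identical, plus the SR terms $-\tilde\nu_\pi$, incurred when a counter first reaches $2$. Since $\NumCustPiCoverByLabelSet{u}^{1}=\NumCustPiCoverByLabelSet{v}^{2}$ and the suffix adds the same customers, the counters evolve identically, so the SR increments coincide exactly. This equality is precisely why the $\SRCorrectionSet{\cdot}{\cdot}$ correction and the return credit $\RetCredit{\cdot}{\cdot}$ are unnecessary here. Therefore the two completed costs differ by exactly $\LabelReducedCost{u}^{1}-\LabelReducedCost{v}^{2}\le0$, which yields dominance in the sense defined before Proposition~\ref{improved_forward_dominance}. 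Branch compatibility $\BranchExtSet{\ForwardLabel{v}^{2}}\subseteq\BranchExtSet{\ForwardLabel{u}^{1}}$ follows from equal branch states and the first-arc hypothesis.

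\textbf{Main obstacle.} The delicate step is the induction for timing at a common pad when $u\neq v$: label 1 may sit earlier in the drone block, with smaller $\TruckWaitHubTime{}$ and smaller $\HubFlyDroneNum{}$. We must check that the exit-arc arrival $\TruckArrHubTime{}+\TruckWaitHubTime{}+\TruckTravelTime{h}{x}$ and the drone arrivals $\TruckArrHubTime{}+\DroneTravelTime{h}{\cdot}$ remain ordered. They do, because both resources are updated by monotone maps applied to identical inputs from the shared suffix. I would verify this case explicitly, taking care that the active pad coincides so that exit-arc truck accessibility $\TruckAccessNodes{h}$ is the same for both labels, and otherwise cite the corresponding step in the proof of Proposition~\ref{prop:physical-location-dominance}.
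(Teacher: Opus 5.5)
Your proposal is correct and follows essentially the same route as the paper's proof. Feasibility of each completion of $\ForwardLabel{v}^{2}$ is transferred to $\ForwardLabel{u}^{1}$ by the argument of Proposition~\ref{prop:physical-location-dominance}; then $\NumCustPiCoverByLabelSet{u}^{1}=\NumCustPiCoverByLabelSet{v}^{2}$ and the identical suffix make every increment of \eqref{eq:farkas-label-rc} coincide, so the completed Farkas costs differ by exactly $\LabelReducedCost{u}^{1}-\LabelReducedCost{v}^{2}\le 0$.

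Two small wording fixes are needed. First, the case where $\ForwardLabel{u}^{1}$ has served no customer but $\ForwardLabel{v}^{2}$ has is not ``harmless''; it is excluded outright by the implication in \eqref{eq:pld-cust}, since otherwise a customer-free completion would be feasible only from $\ForwardLabel{v}^{2}$. Second, the $\BranchSeparate{pq}$ check is not recorded in the branch state, so it follows from the served-set inclusion rather than from equal branch states.
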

\begin{proof}
See Appendix~\ref{app:proof:prop-farkas-pricing-dominance}. 
\end{proof}

\section{Implementation Details and Primal Heuristics}
\label{app:accel-full}

\subsection{Customer-Specific Earliest-Service Lower Bounds}
\label{sec:arr_bound}
The quadratic delay term uses the customer-specific service-time lower bound $\arriveLB{n}$. Using the shortest truck travel times $\ShortestTruckTime{i}{j}$ defined in Section~\ref{arc-form}, direct truck service gives $\TruckServLB{n}:=\ShortestTruckTime{\DepotSource}{n}$ when $\DemandWeight{n}\le\TruckPayload$, $\ShortestTruckTime{\DepotSource}{n}<+\infty$, and $\ShortestTruckTime{n}{\DepotSink}<+\infty$; otherwise, $\TruckServLB{n}:=+\infty$.
For drone service, define $\DroneFeasibleHubs{n}:=\{h\in\HubNodes:(h,n)\in\DroneArcs,\,\ShortestTruckTime{\DepotSource}{h}<+\infty,\,\ShortestTruckTime{h}{\DepotSink}<+\infty\}$. If customer $n$ is served by a drone launched from $h\in\DroneFeasibleHubs{n}$, the truck cannot reach $h$ earlier than $\ShortestTruckTime{\DepotSource}{h}$, so the delivery time of $n$ is at least $\ShortestTruckTime{\DepotSource}{h}+\DroneTravelTime{h}{n}$. Hence,
$\DroneServLB{n}:=\min_{h\in\DroneFeasibleHubs{n}}\{\ShortestTruckTime{\DepotSource}{h}+\DroneTravelTime{h}{n}\}$, with the minimum defined as $+\infty$ when $\DroneFeasibleHubs{n}=\emptyset$.
Since every customer is served either by truck or by drone, a valid lower bound on its service time is $\arriveLB{n}:=\min\{\TruckServLB{n},\DroneServLB{n}\}$ for all $n\in\CustNodes$.

\subsection{Reachability and Dual-Reward Pruning.}
\label{sec:dual-reward-pruning}
For a label $\ForwardLabel{i}$, define its residual payload as $\ResidualPayload{\ForwardLabel{i}}:=\TruckPayload-\TruckWeight{i}$. Also, every feasible completion has truck return time at least $\CompletionReturnLB{\ForwardLabel{i}}:=\RealizedServTime{i}+\ShortestTruckTime{i}{\DepotSink}$ if $i\in\Nodes$, and $\CompletionReturnLB{\ForwardLabel{i}}:=\TruckArrHubTime{i}+\TruckWaitHubTime{i}+\ShortestTruckTime{\LabelHub{i}}{\DepotSink}$ if $i\in\CustNodesDuplicate$. These two quantities are used below to bound feasible continuations and their reduced costs.
For $v\in\Nodes$ and $n\in\CustNodes$, define the relaxed earliest service time from physical location $v$ as
\[
\beta_{v,n} := \min\left\{\ShortestTruckTime{v}{n},\,\min_{h:(h,n)\in\DroneArcs}\left[\ShortestTruckTime{v}{h}+\DroneTravelTime{h}{n}\right]\right\},
\]
where an empty minimum is interpreted as $+\infty$. Thus, $\beta_{v,n}$ considers either direct truck service of $n$ or future drone service from a pad reachable from $v$. Also define $\zeta_{h,n}:=\DroneTravelTime{h}{n}$ if $(h,n)\in\DroneArcs$, and $\zeta_{h,n}:=+\infty$ otherwise.

For $n\in\CustNodes\setminus\RouteServedSet{\PartialPath{i}}$, a lower bound on its service time over all feasible completions of $\ForwardLabel{i}$ is
\begin{equation}\label{eq:label-customer-service-lb}
\LabelServLB{\ForwardLabel{i}}{n} := \begin{cases}
\RealizedServTime{i}+\beta_{i,n}, & i\in\Nodes,\\[1mm]
\TruckArrHubTime{i}+\min\left\{\TruckWaitHubTime{i}+\beta_{\LabelHub{i},n},\,\zeta_{\LabelHub{i},n}\right\}, & i\in\CustNodesDuplicate.
\end{cases}
\end{equation}

At a physical endpoint, $\beta_{i,n}$ covers direct truck service and drone service from a future reachable pad. Within an active drone-service block, the second case also allows $n$ to be served immediately from the active pad before the block is closed. Hence, $\LabelServLB{\ForwardLabel{i}}{n}$ is valid for every feasible completion of $\ForwardLabel{i}$.

We then define
\begin{equation}\label{eq:reachable-customer-set}
\begin{aligned}
\ReachableCustSet{\ForwardLabel{i}}
:=
\Bigl\{
n\in\CustNodes\setminus\RouteServedSet{\PartialPath{i}}:\;
&\DemandWeight{n}\le\ResidualPayload{\ForwardLabel{i}},\;
\LabelServLB{\ForwardLabel{i}}{n}\le\EffServUB{n},\\
&
\bigl(
\ShortestTruckTime{\PhysLoc{\ForwardLabel{i}}}{n}<+\infty
\wedge
\ShortestTruckTime{n}{\DepotSink}<+\infty
\bigr)\\
&\quad\vee
\bigl(
\exists h\in\HubNodes:
(h,n)\in\DroneArcs,\;
\ShortestTruckTime{\PhysLoc{\ForwardLabel{i}}}{h}<+\infty,\;
\ShortestTruckTime{h}{\DepotSink}<+\infty
\bigr)
\Bigr\}.
\end{aligned}
\end{equation}

If a feasible completion of $\ForwardLabel{i}$ covers a customer not already in $\RouteServedSet{\PartialPath{i}}$, that customer belongs to $\ReachableCustSet{\ForwardLabel{i}}$. Consequently, an unserved customer outside $\ReachableCustSet{\ForwardLabel{i}}$ cannot be served in any feasible continuation of $\ForwardLabel{i}$.

\paragraph{Together-branch pruning.}
Under $\BranchTogether{pq}$, if $p\in\RouteServedSet{\PartialPath{i}}$, $q\notin\RouteServedSet{\PartialPath{i}}$, and $q\notin\ReachableCustSet{\ForwardLabel{i}}$, then $\ForwardLabel{i}$ has no branch-feasible completion and is discarded. The same rule applies with $p$ and $q$ exchanged. Since this rule depends only on feasibility, it applies to both standard and Farkas pricing.

\paragraph{Dual-reward reduced-cost pruning.}
For standard pricing, only positive customer duals can decrease the reduced cost of a completion. Since every newly covered customer belongs to $\ReachableCustSet{\ForwardLabel{i}}$ and the total additional demand cannot exceed $\ResidualPayload{\ForwardLabel{i}}$, define $\PositivePart{\mu_n}:=\max\{\mu_n,0\}$ and bound the attainable customer dual reward by the fractional knapsack 
\[
\DualRewardBound{\ForwardLabel{i}}:=\max\left\{
\begin{aligned}
&\sum_{n\in\ReachableCustSet{\ForwardLabel{i}}}\PositivePart{\mu_n}\KnapVar{n}:\sum_{n\in\ReachableCustSet{\ForwardLabel{i}}}\DemandWeight{n}\KnapVar{n}\le\ResidualPayload{\ForwardLabel{i}},\\
&0\le\KnapVar{n}\le1,\ n\in\ReachableCustSet{\ForwardLabel{i}}
\end{aligned}
\right\}.
\]
 It is solved greedily by ordering customers in nonincreasing $\PositivePart{\mu_n}/\DemandWeight{n}$, taking each customer fully while capacity remains and, if necessary, taking the first nonfitting customer fractionally to exhaust the residual payload. A lower bound on the reduced cost of every feasible completion is $\KnapReducedCostLB{\ForwardLabel{i}}:=\LabelReducedCost{i}+\ObjCoeffReturn\CompletionReturnLB{\ForwardLabel{i}}-\DualRewardBound{\ForwardLabel{i}}$.

\begin{proposition}[Dual-reward-bounded label pruning]
\label{prop:dual-reward-pruning}
For a label $\ForwardLabel{i}$ with $i\neq\DepotSink$, if $\KnapReducedCostLB{\ForwardLabel{i}}\ge0$, then $\CompletionCost{\ForwardLabel{i}}{\Completion}\ge0$ for every $\Completion\in\CompletionSet{\ForwardLabel{i}}$. Hence, $\ForwardLabel{i}$ can be safely discarded in standard pricing.
\end{proposition}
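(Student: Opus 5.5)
To prove Proposition~\ref{prop:dual-reward-pruning}, we fix a label $\ForwardLabel{i}$ with $i\neq\DepotSink$ and a completion $\Completion\in\CompletionSet{\ForwardLabel{i}}$. We then write the completed reduced cost $\CompletionCost{\ForwardLabel{i}}{\Completion}$ as $\LabelReducedCost{i}$ plus the increments of recursion \eqref{eq:label-rc-extension} accumulated along $\Completion$. These increments are grouped into four parts: (a) the nonnegative delay terms $\ObjCoeffDelay(\RealizedServTime{j}-\arriveLB{\ServedCust{j}})^2$ and drone-cost terms $\ObjCoeffCost\DroneCost$; (b) the SR contributions $-\nu_\pi$; (c) the sink term $\ObjCoeffReturn\RealizedServTime{\DepotSink}$; and (d) the customer dual terms $-\mu_{\ServedCust{j}}$ for newly covered customers. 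The aim is to show that (a)+(b) $\ge 0$, (c) $\ge\ObjCoeffReturn\CompletionReturnLB{\ForwardLabel{i}}$, and (d) $\ge-\DualRewardBound{\ForwardLabel{i}}$. Summing these gives $\CompletionCost{\ForwardLabel{i}}{\Completion}\ge\KnapReducedCostLB{\ForwardLabel{i}}\ge0$.

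Parts (a) and (b) are immediate. Delay terms are squares, $\ObjCoeffDelay,\ObjCoeffCost,\DroneCost\ge0$, and every $\nu_\pi\le0$, so each SR contribution $-\nu_\pi$ is nonnegative. Note that $\RealizedServTime{j}\ge\arriveLB{\ServedCust{j}}$ is not needed, since the term is squared.

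For (c), we show that the truck return time of any feasible completion is at least $\CompletionReturnLB{\ForwardLabel{i}}$. If $i\in\Nodes$, the truck is at $i$ at time $\RealizedServTime{i}$. All subsequent truck-clock increments are truck travel times, plus nonnegative pad waits added on exit arcs. The physical projection of the remaining path is therefore a truck walk from $i$ to $\DepotSink$, whose travel time is at least $\ShortestTruckTime{i}{\DepotSink}$. If $i\in\CustNodesDuplicate$, the block can only be extended by further duplicated nodes, which raise the wait to some $W\ge\TruckWaitHubTime{i}$, and must be closed by an exit arc to some $u\in\TruckAccessNodes{\LabelHub{i}}$. That exit sets the clock to $\TruckArrHubTime{i}+W+\TruckTravelTime{\LabelHub{i}}{u}$, after which the previous argument applies from $u$. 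Together with the triangle inequality for shortest paths, this yields the bound. The key detail to check is that the exit-arc update uses the maximum over the block, which is monotone nondecreasing.

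Part (d) is the main obstacle. Let $S\subseteq\CustNodes\setminus\RouteServedSet{\PartialPath{i}}$ be the set of customers newly covered by $\Completion$; customer elementarity makes this set well defined. First, $S\subseteq\ReachableCustSet{\ForwardLabel{i}}$ must be verified:
\begin{itemize}
\item Payload feasibility gives $\DemandWeight{n}\le\ResidualPayload{\ForwardLabel{i}}$.
\item Timing gives $\LabelServLB{\ForwardLabel{i}}{n}\le\RealizedServTime{n}\le\EffServUB{n}$, using the validity of \eqref{eq:label-customer-service-lb}. This validity is established by the same walk argument as in (c): truck service reaches $n$ after a truck walk, and drone service either comes from the active pad (the $\zeta$ case) or from a later pad reached after closing the block.
\item The connectivity disjunction holds because the completion ends at $\DepotSink$.
\end{itemize}
Second, $\sum_{n\in S}\DemandWeight{n}\le\ResidualPayload{\ForwardLabel{i}}$ by the payload check. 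The indicator vector of $S$ is therefore feasible for the fractional knapsack defining $\DualRewardBound{\ForwardLabel{i}}$. Hence $\sum_{n\in S}\mu_n\le\sum_{n\in S}\PositivePart{\mu_n}\le\DualRewardBound{\ForwardLabel{i}}$, and (d) $=-\sum_{n\in S}\mu_n\ge-\DualRewardBound{\ForwardLabel{i}}$.

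Finally, since $\ObjCoeffReturn\ge0$, combining the four bounds gives $\CompletionCost{\ForwardLabel{i}}{\Completion}\ge\KnapReducedCostLB{\ForwardLabel{i}}\ge0$. No completion of $\ForwardLabel{i}$ can therefore yield a negative-reduced-cost column, so discarding the label is safe in standard pricing.
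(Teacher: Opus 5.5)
Your proposal is correct and follows essentially the same argument as the paper. Both proofs split the completion's reduced cost the same way: the delay, drone-cost, and SR increments are nonnegative; the return-time term is bounded below via $\CompletionReturnLB{\ForwardLabel{i}}$; and the positive customer duals are bounded by the fractional knapsack, using the indicator vector of the newly covered customers as a feasible knapsack point. The only difference is that you explicitly verify two facts the paper treats as already established by the discussion preceding the proposition: that $\CompletionReturnLB{\ForwardLabel{i}}$ is a valid lower bound (via the truck-walk and triangle-inequality argument), and that newly covered customers belong to $\ReachableCustSet{\ForwardLabel{i}}$.
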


\begin{proof}
See Appendix~\ref{app:proof:prop-dual-reward-pruning}. 
\end{proof}

\paragraph{Farkas dual-reward pruning.}
For Farkas pricing, define $\FarkasDualRewardBound{\ForwardLabel{i}}$ by replacing each customer dual $\mu_n$ by the Farkas-ray component $\tilde\mu_n$ in the fractional-knapsack relaxation defining $\DualRewardBound{\ForwardLabel{i}}$. It is therefore solved by the same greedy ordering using $\PositivePart{\tilde\mu_n}/\DemandWeight{n}$. The corresponding completion lower bound is $\FarkasReducedCostLB{\ForwardLabel{i}}:=\LabelReducedCost{i}-\FarkasDualRewardBound{\ForwardLabel{i}}$. Unlike the standard-pricing bound, it contains no return-time term because time resources affect Farkas feasibility but do not appear in \eqref{eq:farkas-label-rc}.

\begin{proposition}[Farkas dual-reward label pruning]
\label{prop:farkas-dual-reward-pruning}
If $\FarkasReducedCostLB{\ForwardLabel{i}}\ge0$, then every $\Completion\in\CompletionSet{\ForwardLabel{i}}$ has nonnegative Farkas reduced cost. Hence, $\ForwardLabel{i}$ can be safely discarded in Farkas pricing.
\end{proposition}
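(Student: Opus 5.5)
The plan is to show that, for any completion $\Completion\in\CompletionSet{\ForwardLabel{i}}$, the final Farkas reduced cost of the completed path is at least $\FarkasReducedCostLB{\ForwardLabel{i}}$. The nonnegativity then follows directly from the hypothesis.

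First, unroll recursion \eqref{eq:farkas-label-rc} along the completion. Write $\mathcal N:=\RouteServedSet{\PartialPath{i}\oplus\Completion}\setminus\RouteServedSet{\PartialPath{i}}$ for the customers newly covered by $\Completion$. Because customer elementarity is enforced, each $n\in\mathcal N$ contributes exactly one term $-\tilde\mu_n$. Truck arcs, pad nodes and the sink contribute nothing. This gives
\[
\CompletionCost{\ForwardLabel{i}}{\Completion}=\LabelReducedCost{i}-\sum_{n\in\mathcal N}\tilde\mu_n-\sum_{\pi}\tilde\nu_\pi ,
\]
where the last sum runs over the active SR rows whose counter first reaches $2$ during the completion. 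Each such row is counted once, since the set $\SRset(\NumCustPiCoverByLabelSet{\cdot},\NumCustPiCoverByLabelSet{\cdot})$ only fires on the transition to $2$. Since $\tilde\nu_\pi\le0$, every SR contribution $-\tilde\nu_\pi$ is nonnegative, so these terms can be dropped for a lower bound.

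Second, bound the customer term. We have $-\sum_{n\in\mathcal N}\tilde\mu_n\ge-\sum_{n\in\mathcal N}\PositivePart{\tilde\mu_n}$. By the reachability argument stated after \eqref{eq:reachable-customer-set}, every $n\in\mathcal N$ lies in $\ReachableCustSet{\ForwardLabel{i}}$. The completion is resource-feasible, and payload accumulates $\DemandWeight{\ServedCust{j}}$ for every customer node, so $\sum_{n\in\mathcal N}\DemandWeight{n}\le\TruckPayload-\TruckWeight{i}=\ResidualPayload{\ForwardLabel{i}}$. Hence the indicator vector of $\mathcal N$ is feasible for the fractional knapsack with rewards $\PositivePart{\tilde\mu_n}$, which gives $\sum_{n\in\mathcal N}\PositivePart{\tilde\mu_n}\le\FarkasDualRewardBound{\ForwardLabel{i}}$. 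The greedy ratio rule computes this LP optimum exactly; this is the standard fractional-knapsack fact, and it holds since $\DemandWeight{n}>0$.

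Combining the two steps yields $\CompletionCost{\ForwardLabel{i}}{\Completion}\ge\LabelReducedCost{i}-\FarkasDualRewardBound{\ForwardLabel{i}}=\FarkasReducedCostLB{\ForwardLabel{i}}\ge0$. No completed route extending the label can therefore satisfy the Farkas-violation condition $\LabelReducedCost{\DepotSink}<0$, and discarding the label loses no violating column.

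The main obstacle is the reachability claim: that every newly covered customer lies in $\ReachableCustSet{\ForwardLabel{i}}$. In particular, the service-time lower bound \eqref{eq:label-customer-service-lb} must be valid inside an active drone block. I would verify this by cases. If the customer is served later by the truck, or by a drone from a pad visited after the block closes, its service time is at least $\TruckArrHubTime{i}+\TruckWaitHubTime{i}$ plus the shortest-path time, because closing the block adds the waiting time and the waiting resource is monotone nondecreasing. If instead it is served by a drone from the current active pad, its service time equals $\TruckArrHubTime{i}+\zeta_{\LabelHub{i},n}$. The connectivity disjuncts hold because any feasible completion physically reaches either $n$ or a launching pad, and then reaches $\DepotSink$.
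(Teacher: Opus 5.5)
Your proposal is correct and follows essentially the same argument as the paper. You unroll the Farkas recursion, drop the nonnegative SR terms using $\tilde\nu_\pi\le 0$, and bound the ray reward of the newly covered customers by the fractional knapsack, using their indicator vector as a feasible solution. The only addition is that you explicitly verify the reachability claim, including the active-drone-block case of the service-time bound. The paper simply invokes that claim from the remark following the definition of the reachable-customer set.
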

\begin{proof}
See Appendix~\ref{app:proof:prop-farkas-dual-reward-pruning}. 
\end{proof}

\subsection{Parallel Pricing with Dynamic Route-Space Partitioning}
\label{sec:parallel-forward-pricing}
We implement the parallel-pricing scheme summarized in Section~\ref{sec:accel-summary} by partitioning $\BranchFeasibleRoutes{m}$ and dynamically reassigning open labels while preserving complete route-space coverage.

\paragraph{Initial route-space partition.}
At node $m$, define $\SourceNeighborSet{m}:=\{v_1^r:r\in\BranchFeasibleRoutes{m}\}$ as the possible first physical nodes after $\DepotSource$, and let $\PricingWorkerNum{m}$ denote the number of pricing processes. For each $v\in\SourceNeighborSet{m}$, let $\ForwardLabel{v}$ be the label obtained by extending the source label to $v$. Define $\mathcal U_m(\ForwardLabel{v})$ by \eqref{eq:reachable-customer-set} with the payload and service-time tests omitted. Thus, $\mathcal U_m(\ForwardLabel{v})$ contains the unserved customers that remain reachable on a relaxed continuation from $v$ to $\DepotSink$, either by direct truck service or by drone service from a reachable pad.

Using
$\DeadlineReachSet{\ForwardLabel{v}}
:=
\{n\in\mathcal U_m(\ForwardLabel{v}):
\LabelServLB{\ForwardLabel{v}}{n}\le\EffServUB{n}\}$, define the scheduling score
\begin{equation}\label{eq:balanced-work-score}
\begin{aligned}
\SourceWorkScore{m}{\ForwardLabel{v}}
:={}&
1
+|\mathcal U_m(\ForwardLabel{v})|
+\frac14
\left|
\left\{
j\in\Duplicate{\Nodes}:(v,j)\in\Duplicate{\Arcs}
\right\}
\right|\\
&+\frac12
\left|
\left\{
h\in\HubNodes:
\ShortestTruckTime{\PhysLoc{\ForwardLabel{v}}}{h}<+\infty,\;
\ShortestTruckTime{h}{\DepotSink}<+\infty,\;
\mathcal U_m(\ForwardLabel{v})\cap\DroneAccessNodes{h}\neq\emptyset
\right\}
\right|
+\frac12
|\DeadlineReachSet{\ForwardLabel{v}}|.
\end{aligned}
\end{equation}

The score estimates the amount of label-setting work associated with taking $v$ as the first node. Its four variable terms measure, respectively, the number of relaxed reachable customers, the number of outgoing transformed arcs, the number of reachable pads from which at least one customer in $\mathcal U_m(\ForwardLabel{v})$ can be served by drone, and the number of relaxed reachable customers satisfying their service-time limits. A larger $\SourceWorkScore{m}{v}$ therefore indicates a larger expected continuation search.
Initially, set $\SourceNeighborBlock{m}{k}=\emptyset$ for $k=1,\ldots,\PricingWorkerNum{m}$. The nodes in $\SourceNeighborSet{m}$ are considered in nonincreasing order of $\SourceWorkScore{m}{v}$. For each node $v$, assign it to the process $k$ having the smallest current total $\sum_{u\in\SourceNeighborBlock{m}{k}}\SourceWorkScore{m}{u}$; an empty block has total score $0$, and ties are broken by the process index.
The resulting assignment satisfies
\begin{equation}\label{eq:task-route-partition}
\SourceNeighborSet{m}
=
\biguplus_{k=1}^{\PricingWorkerNum{m}}
\SourceNeighborBlock{m}{k},
\qquad
\WorkerRouteSet{m}{k}
:=
\left\{
r\in\BranchFeasibleRoutes{m}:
v_1^r\in\SourceNeighborBlock{m}{k}
\right\},
\qquad
\BranchFeasibleRoutes{m}
=
\biguplus_{k=1}^{\PricingWorkerNum{m}}
\WorkerRouteSet{m}{k}.
\end{equation}

Here, $\WorkerRouteSet{m}{k}$ contains all branch-feasible routes whose first physical node belongs to $\SourceNeighborBlock{m}{k}$. Hence, every route in $\BranchFeasibleRoutes{m}$ is initially assigned to exactly one pricing process.

\paragraph{Dynamic route-space repartitioning.}
Each pricing process applies the forward label-setting algorithm to its assigned route set. If another process is idle, repartitioning is considered after every $1{,}000$ label extensions performed by the current process. For its task $\tau$, let $\OpenTaskLabels{\tau}$ denote the open nondominated labels and define $\TaskWorkEstimate{\tau}$ as the sum of $|\OpenTaskLabels{\tau}|$, the label extensions since the preceding checkpoint, and the full dominance comparisons over the same interval. Task $\tau$ is eligible for repartitioning if $|\OpenTaskLabels{\tau}|\ge500$, its processing time is at least $5$ seconds, and $\TaskWorkEstimate{\tau}\ge2{,}000$.
In standard pricing, define $\TaskClosureGap{\tau}:=\PositivePart{-\min_{\Lambda\in\OpenTaskLabels{\tau}}\KnapReducedCostLB{\Lambda}}$. In Farkas pricing, use $\TaskClosureGap{\tau}:=\PositivePart{-\min_{\Lambda\in\OpenTaskLabels{\tau}}\FarkasReducedCostLB{\Lambda}}$. In either case, repartitioning additionally requires $\TaskClosureGap{\tau}$ to exceed a small positive tolerance, avoiding repartitioning when the task is close to closure.

Among the eligible tasks, one donor $\tau$ is selected lexicographically by decreasing $\TaskWorkEstimate{\tau}$, decreasing $\TaskClosureGap{\tau}$, and decreasing $|\OpenTaskLabels{\tau}|$. Its open nondominated labels are then divided as evenly as possible among the donor and the currently idle pricing processes.
Each transferred label is removed from the donor and assigned with its complete state to exactly one receiving process. Since $\CompletionSet{\Lambda}$ is unchanged by a transfer, repartitioning changes only which process searches those completions. Each receiving process applies the applicable dominance rules to the transferred labels before continuing the search, and dominance comparisons remain task-local. All processes in the same pricing call use the same RMP dual solution, $\ActiveSRset{m}$, and inherited branching decisions.

Within each pricing call, standard pricing returns at most $\BatchLimit$ distinct negative-reduced-cost routes, whereas Farkas pricing returns at most $\FarkasBatchLimit$ routes violating the current certificate. If any are found, they are inserted into the RMP, which is reoptimized, and unfinished searches under the preceding dual solution or Farkas ray are discarded. Otherwise, with no pending transfer, pricing terminates only when every task is exhausted or all its open nondominated labels $\Lambda$ satisfy $\KnapReducedCostLB{\Lambda}\ge0$ in standard pricing or $\FarkasReducedCostLB{\Lambda}\ge0$ in Farkas pricing.

\subsection{Primal Heuristics and Column Pooling}
\label{sec:prim_heur}

\paragraph{Compact arc-based warm start and canonical decomposition.}
At the root node, the compact formulation in Section~\ref{arc-form} is solved under a fixed Gurobi optimization-time limit. If an incumbent solution is obtained, it is converted directly into route columns using the canonical representation in Section~\ref{sec:net-trans}: the selected truck arcs define each physical truck path, while the selected drone assignments at a visited pad $h$ define $\DroneBlock{h}{r}$, whose customers are ordered by $\prec_h$. The timing resources are recomputed from the resulting canonical paths, and each $\RouteCost{r}$ is evaluated according to \eqref{eq:route-cost-def}. The resulting columns are inserted into the initial RMP, and the corresponding set-partitioning solution initializes the root incumbent. If no usable route columns are obtained, the root RMP starts without route columns and proceeds directly to Farkas pricing as described in Section~\ref{sec:farkas-pricing}.

\paragraph{Branch-compatible route-pool incumbent search.}
The implementation maintains a global pool $\RoutePool$ of routes generated by pricing and the compact arc-based warm start. At node $m$, only routes satisfying the branching decisions at that node are retained, giving $\NodeRoutePool{m}:=(\RouteSubset{m}\cup\RoutePool)\cap\BranchFeasibleRoutes{m}$. The integer set-partitioning model defined in Section~\ref{sec:path-form} is then solved with $\RouteSet$ replaced by $\NodeRoutePool{m}$. Any feasible solution obtained from this restricted model is a valid incumbent candidate. Infeasibility of the restricted model only indicates that the currently available routes cannot form a feasible integer solution and does not imply infeasibility of node $m$.

\section{Test-Instance Generation and Computational Settings}
\label{app:test-instance-generation}

The depot is at $(0,0)$ in the region $[0,25]^2$ km. PS customer locations are sampled independently and uniformly over this region. PC instances use $G:=\max\{1,\lfloor|\CustNodes|/5\rfloor\}$ cluster centers sampled independently and uniformly over the region. Customers are assigned to centres cyclically in index order, with independent normal coordinate deviations of mean zero and standard deviation $0.7$ km; locations outside the region are resampled. Pads are the centroids of $K$-means clustering with $k=|\HubNodes|$.
Parcel weights (kg) follow $N(1,5^2)$ truncated to $[0.5,\DronePayload]$ for a uniformly selected subset of $\lfloor0.50|\CustNodes|\rfloor$ customers, and $N(10,5^2)$ truncated to $[6,+\infty)$ for the remainder. Among drone-eligible customers, $\lceil0.16|\CustNodes|\rceil$ are selected for the drone-only set $\MandatoryDroneCust$.
Service limits are $\ReqServUB{n}=\arriveLB{n}+\ServWindowOffset{n}$, with $\ServWindowOffset{n}\sim\operatorname{Unif}([30,90])$ minutes. Both vehicle speeds are $40$ km/h, with $\TruckPayload=50$ kg, $\DronePayload=2.3$ kg, $\DroneEndurance=30$ minutes, $(\TruckCost,\DroneCost)=(\$20,\$6)$, and objective weights $(\WeightDelay,\WeightReturn,\WeightCost)=(0.4,0.3,0.3)$.
Each BPC and MIQP run is limited to $1{,}800$ s. Standard and Farkas pricing batches contain at most $\BatchLimit=64$ and $\FarkasBatchLimit=16$ routes, respectively; each SR-separation round adds at most $\SRCutBatchLimit=32$ inequalities. The compact warm start is limited to $60$ s and each route-pool integer solve to $2$ s.

\section{Proofs}\label{app:proofs}
\subsection{Proof of Theorem~\ref{thm:cost-preserving-encoding}: Cost-Preserving Route Encoding}
\label{app:proof:thm-cost-preserving-encoding}

\begin{proof} Consider a feasible route $r\in\RouteSet$ with physical truck path $P^{\mathrm T}(r)=(v_0,\ldots,v_L)$, where $v_0=\DepotSource$ and $v_L=\DepotSink$. For each visited pad $h=v_\ell$, order $\DroneBlock{h}{r}$ as $(j_1,\ldots,j_q)$ under $\prec_h$. If $q\ge1$, replace $(h,v_{\ell+1})$ by $h,\Duplicate{j_1},\ldots,\Duplicate{j_q},v_{\ell+1}$; otherwise, retain the original truck arc. For a nonempty block, the entry, within-block, and exit arcs belong to $\CustHubArcsDuplicate$, $\CustDupCustDupArcs$, and $\CustDupCustArcs$, respectively, since $j_s\in\DroneAccessNodes{h}$, the sequence respects $\prec_h$, and $v_{\ell+1}\in\TruckAccessNodes{h}$. Applying this construction at every visited pad yields a source--sink path in $\TransGraph$.

The fixed truck path and canonical ordering uniquely determine the representation. The pad preceding each duplicated-node block identifies its launch location, even when a duplicated customer is accessible from several pads. Reassigning simultaneous sorties among identical drones leaves the representation unchanged.

Truck-served customers retain their physical nodes, while each drone-served customer is represented once by its duplicate. Thus, customer coverage, parcel demands, and the number of sorties launched at each pad are preserved. For $j\in\DroneBlock{h}{r}$, the delivery time remains $\TruckArrTime{h}{r}+\DroneTravelTime{h}{j}$, and the truck waits for $\max_{j\in\DroneBlock{h}{r}}\DroneTripTime{h}{j}$, with value zero for an empty block. These quantities coincide with the transformed-path resource updates. Since the physical truck movements and travel times are unchanged, all subsequent truck arrival times and the depot return time are also preserved.

The construction therefore preserves payload, drone-service, and timing feasibility. The customer service times, truck return time, and sortie count in \eqref{eq:route-cost-def} are unchanged, so the original route and its transformed path have the same cost. 
\end{proof}

\subsection{Proof of Proposition~\ref{improved_forward_dominance}: Same-Node Forward Dominance}
\label{app:proof:improved-forward-dominance}

\begin{proof}
Take any completion $\Completion\in\CompletionSet{\ForwardLabel{i}^{2}}$. Because the two labels end at the same transformed node and satisfy $\LabelHub{i}^{1}=\LabelHub{i}^{2}$, every transformed arc used by $\Completion$ from $\ForwardLabel{i}^{2}$ is also available from $\ForwardLabel{i}^{1}$. The inclusions $\RouteServedSet{\PartialPath{i}^{1}}\subseteq\RouteServedSet{\PartialPath{i}^{2}}$ and $\VisitedTruckNodes{i}^{1}\subseteq\VisitedTruckNodes{i}^{2}$ ensure that every customer and physical node used by $\Completion$ remains available from $\ForwardLabel{i}^{1}$. Since $\Completion$ adds the same customer demand and drone launches to both labels, the inequalities $\TruckWeight{i}^{1}\le\TruckWeight{i}^{2}$ and $\HubFlyDroneNum{i}^{1}\le\HubFlyDroneNum{i}^{2}$ preserve truck-payload and drone-fleet feasibility.

By the definition of $\BranchState{i}$, $\BranchState{i}^{1}=\BranchState{i}^{2}$ means that the two labels have the same current status for every $\BranchTogether{pq}$ and customer-conditioned transformed-arc requirement. Therefore, applying the same completion $\Completion$ preserves these branching requirements for both labels. For $\BranchSeparate{pq}$, if $\Completion$ caused $\ForwardLabel{i}^1$ to cover both $p$ and $q$, then $\RouteServedSet{\PartialPath{i}^{1}}\subseteq\RouteServedSet{\PartialPath{i}^{2}}$ would imply the same violation for $\ForwardLabel{i}^2$, contradicting the feasibility of $\Completion$.

The nonempty-route requirement is preserved by the disjunctive condition in the proposition. If $\RouteServedSet{\PartialPath{i}^{1}}\neq\emptyset$, the route completed from $\ForwardLabel{i}^1$ is already nonempty. Otherwise, $\RouteServedSet{\PartialPath{i}^{1}}=\RouteServedSet{\PartialPath{i}^{2}}=\emptyset$, and because $\Completion$ produces a nonempty route from $\ForwardLabel{i}^2$, it also covers at least one customer when applied to $\ForwardLabel{i}^1$.

Each time-resource update is nondecreasing in $\RealizedServTime{i}$, $\TruckArrHubTime{i}$, and $\TruckWaitHubTime{i}$ and uses the same travel or sortie times for both labels. Hence, the three time inequalities in the proposition imply that $\Completion$ produces no later service or return times from $\ForwardLabel{i}^{1}$. Every service envelope satisfied from $\ForwardLabel{i}^2$ is therefore also satisfied from $\ForwardLabel{i}^1$, so $\CompletionSet{\ForwardLabel{i}^{2}}\subseteq\CompletionSet{\ForwardLabel{i}^{1}}$.

It remains to compare reduced costs. For an active SR row $\pi$, let $t$ be the number of additional customers of $\pi$ covered by $\Completion$. Since $\RouteServedSet{\PartialPath{i}^{1}}\subseteq\RouteServedSet{\PartialPath{i}^{2}}$, we have $\NumCustPiCoverByLabel{i}{\pi}^{1}\le\NumCustPiCoverByLabel{i}{\pi}^{2}$, while elementarity gives $0\le t\le3-\NumCustPiCoverByLabel{i}{\pi}^{2}$. For label $a\in\{1,2\}$, the additional SR contribution is $-\nu_\pi\left(\left\lfloor(\NumCustPiCoverByLabel{i}{\pi}^{a}+t)/2\right\rfloor-\left\lfloor\NumCustPiCoverByLabel{i}{\pi}^{a}/2\right\rfloor\right)$. A direct check shows that $\ForwardLabel{i}^1$ can incur one additional future SR penalty only when $\NumCustPiCoverByLabel{i}{\pi}^{1}=1$, $\NumCustPiCoverByLabel{i}{\pi}^{2}=2$, and $t=1$, precisely for $\pi\in
\SRCorrectionSet{\ForwardLabel{i}^{1}}{\ForwardLabel{i}^{2}}$. Hence, the total future SR contribution from $\ForwardLabel{i}^1$ can exceed that from $\ForwardLabel{i}^2$ by at most $-\sum_{\pi\in\SRCorrectionSet{\ForwardLabel{i}^{1}}{\ForwardLabel{i}^{2}}}\nu_\pi$.

The completion $\Completion$ adds the same future customers and drone sorties to both labels, so its customer-dual and drone-cost increments are identical. Moreover, because feasible service times satisfy $\RealizedServTime{n}\ge\arriveLB{n}$, the quadratic delay and return-time costs are nondecreasing in their respective time resources. Therefore, all non-SR future cost increments from $\ForwardLabel{i}^1$ are no larger than those from $\ForwardLabel{i}^2$. The reduced-cost condition in the proposition offsets the largest possible additional SR penalty, yielding $\CompletionCost{\ForwardLabel{i}^{1}}{\Completion}\le\CompletionCost{\ForwardLabel{i}^{2}}{\Completion}$.

Hence, every feasible completion of $\ForwardLabel{i}^{2}$ is feasible from $\ForwardLabel{i}^{1}$ with no larger reduced cost, so $\ForwardLabel{i}^{2}$ can be safely discarded. 
\end{proof}

\subsection{Proof of Proposition~\ref{prop:physical-location-dominance}: Physical-Location Forward Dominance}
\label{app:proof:prop-physical-location-dominance}

\begin{proof}
Take any completion $\Completion\in\CompletionSet{\ForwardLabel{v}^{2}}$. Since $\ForwardLabel{v}^{2}$ does not end at $\DepotSink$, $\Completion$ contains at least one extension. Let $x$ be the first transformed node added by $\Completion$.
If $u=v$, both labels use the same first transformed arc.
Suppose instead that $u\neq v$. Since the two labels are location-comparable, their definition gives $\PhysLoc{\ForwardLabel{u}^{1}}=\PhysLoc{\ForwardLabel{v}^{2}}\in\HubNodes$ and $\BlockPos{\ForwardLabel{u}^{1}}\le\BlockPos{\ForwardLabel{v}^{2}}$. Let $h:=\PhysLoc{\ForwardLabel{u}^{1}}=\PhysLoc{\ForwardLabel{v}^{2}}$.
If $x\in\CustNodesDuplicate$, feasibility from $\ForwardLabel{v}^{2}$ implies that $\operatorname{rank}_{h}(\ServedCust{x})>\BlockPos{\ForwardLabel{v}^{2}}$.
Hence, $\BlockPos{\ForwardLabel{u}^{1}}\le\BlockPos{\ForwardLabel{v}^{2}}<\operatorname{rank}_{h}(\ServedCust{x})$, so the extension to $x$ also satisfies the canonical-order condition from $\ForwardLabel{u}^{1}$.
If $x\in\Nodes$, then the completion leaves the common pad $h$ and the truck next travels to $x$. Feasibility from $\ForwardLabel{v}^{2}$ therefore gives $x\in\TruckAccessNodes{h}$, i.e., $(h,x)\in\TruckArcs$. Since $\ForwardLabel{u}^{1}$ has the same physical truck location $h$, the same truck movement from $h$ to $x$ is also available from $\ForwardLabel{u}^{1}$.
Since $\BranchState{u}^{1}=\BranchState{v}^{2}$ and the two first extensions produce identical branch-state updates, the labels have the same branch state at $x$. 

From $x$ onward, use the same remaining transformed arcs as $\Completion$. Conditions~\eqref{eq:pld-cust}--\eqref{eq:pld-resource}, together with the feasibility arguments in the proof of Proposition~\ref{improved_forward_dominance}, preserve elementarity, capacity, the nonempty-route requirement, and branching feasibility. Condition~\eqref{eq:pld-time} further ensures that all subsequent service and departure times are no later from $\ForwardLabel{u}^{1}$, so $\Completion\in\CompletionSet{\ForwardLabel{u}^{1}}$.
For reduced cost, the corresponding completions add the same customers and drone sorties, so their customer-dual and drone-cost increments are identical, while the quadratic-delay increments from $\ForwardLabel{u}^{1}$ are no larger. The SR-correction argument in the proof of Proposition~\ref{improved_forward_dominance} bounds the possible additional SR contribution. Moreover, the final truck-return-time advantage is at least $\RetCredit{\ForwardLabel{u}^{1}}{\ForwardLabel{v}^{2}}$; at a common pad this follows from $\max\{\TruckWaitHubTime{u}^{1},T\}\le\max\{\TruckWaitHubTime{v}^{2},T\}$ for any additional maximum sortie duration $T$. Hence, condition~\eqref{eq:pld-cost} yields $\CompletionCost{\ForwardLabel{u}^{1}}{\Completion}\le\CompletionCost{\ForwardLabel{v}^{2}}{\Completion}$.

Thus, every feasible completion of $\ForwardLabel{v}^{2}$ has a feasible corresponding completion from $\ForwardLabel{u}^{1}$ with no larger reduced cost, so $\ForwardLabel{v}^{2}$ may be discarded. 
\end{proof}

\subsection{Proof of Proposition~\ref{prop:farkas-pricing-dominance}: Farkas-Pricing Dominance}
\label{app:proof:prop-farkas-pricing-dominance}

\begin{proof}
By the feasibility argument in the proof of Proposition~\ref{prop:physical-location-dominance}, conditions \eqref{eq:pld-cust}--\eqref{eq:pld-time} together with the first-extension condition in the proposition imply that every completion in $\CompletionSet{\ForwardLabel{v}^{2}}$ has a corresponding feasible completion from $\ForwardLabel{u}^{1}$.
For any such pair of corresponding completions, the same additional customers are covered in the same order. Since $\NumCustPiCoverByLabelSet{u}^{1}=\NumCustPiCoverByLabelSet{v}^{2}$, the active-SR count vectors remain identical after each corresponding extension. Thus, recursion \eqref{eq:farkas-label-rc} adds the same customer-ray and active-SR terms to both labels. Consequently, the difference between their completed Farkas reduced costs remains $\LabelReducedCost{u}^{1}-\LabelReducedCost{v}^{2}\le0$.

Therefore, every certificate-violating completion from $\ForwardLabel{v}^{2}$ has a feasible corresponding completion from $\ForwardLabel{u}^{1}$ with no larger Farkas reduced cost, so $\ForwardLabel{v}^{2}$ may be discarded. 
\end{proof}

\subsection{Proof of Proposition~\ref{prop:dual-reward-pruning}: Dual-Reward-Bounded Label Pruning}
\label{app:proof:prop-dual-reward-pruning}

\begin{proof}
Take any $\Completion\in\CompletionSet{\ForwardLabel{i}}$. The customers newly covered by $\Completion$ belong to $\ReachableCustSet{\ForwardLabel{i}}$ and have total demand at most $\ResidualPayload{\ForwardLabel{i}}$. Hence, setting $\KnapVar{n}=1$ for these customers and $\KnapVar{n}=0$ otherwise is feasible for the fractional-knapsack relaxation defining $\DualRewardBound{\ForwardLabel{i}}$, so their total positive customer-dual contribution is at most $\DualRewardBound{\ForwardLabel{i}}$.
The return-time contribution is at least $\ObjCoeffReturn\CompletionReturnLB{\ForwardLabel{i}}$, while all additional quadratic-delay, drone-cost, and active-SR contributions are nonnegative. Therefore, $\CompletionCost{\ForwardLabel{i}}{\Completion}\ge\LabelReducedCost{i}+\ObjCoeffReturn\CompletionReturnLB{\ForwardLabel{i}}-\DualRewardBound{\ForwardLabel{i}}=\KnapReducedCostLB{\ForwardLabel{i}}$. Hence, if $\KnapReducedCostLB{\ForwardLabel{i}}\ge0$, every feasible completion has nonnegative reduced cost and $\ForwardLabel{i}$ may be discarded. 
\end{proof}

\subsection{Proof of Proposition~\ref{prop:farkas-dual-reward-pruning}: Farkas Dual-Reward Label Pruning}
\label{app:proof:prop-farkas-dual-reward-pruning}

\begin{proof}
Take any $\Completion\in\CompletionSet{\ForwardLabel{i}}$. The customers newly covered by $\Completion$ belong to $\ReachableCustSet{\ForwardLabel{i}}$ and have total demand at most $\ResidualPayload{\ForwardLabel{i}}$. Hence, setting $\KnapVar{n}=1$ for these customers and $\KnapVar{n}=0$ otherwise is feasible for the Farkas fractional-knapsack relaxation. Their total positive customer-ray contribution is therefore at most $\FarkasDualRewardBound{\ForwardLabel{i}}$.
By \eqref{eq:farkas-label-rc}, the future customer-ray terms are bounded below by $-\FarkasDualRewardBound{\ForwardLabel{i}}$. Moreover, every future SR term is nonnegative because $\tilde\nu_\pi\le0$. Consequently, $\CompletionCost{\ForwardLabel{i}}{\Completion}\ge\LabelReducedCost{i}-\FarkasDualRewardBound{\ForwardLabel{i}}=\FarkasReducedCostLB{\ForwardLabel{i}}$. If $\FarkasReducedCostLB{\ForwardLabel{i}}\ge0$, no completion can have negative Farkas reduced cost, so $\ForwardLabel{i}$ may be discarded. 
\end{proof}

\end{document}